\documentclass[11pt,twoside]{article}
\usepackage{amsmath,bm}
\usepackage{amssymb}
\usepackage{fancyhdr}
\usepackage{latexsym}
\usepackage{bbding}
\usepackage{mathrsfs}
\usepackage{exscale}
\usepackage{relsize}
\usepackage{wasysym}
\usepackage{cite}
\usepackage{multicol,graphics}
\usepackage{color}
\usepackage{lineno}

\usepackage{tabularx}
\usepackage{enumerate}
\usepackage[dvipdfm]{graphicx}
\usepackage{texdraw}

\usepackage{microtype}
\usepackage{caption}
\usepackage{subcaption}
\usepackage[labelformat=parens,labelsep=space,font=small]{subcaption}
\allowdisplaybreaks[4]
\def\vp{\varepsilon}
\def\p{\partial}

\usepackage{mathrsfs}

\usepackage{amsfonts,amssymb,amsmath}
\usepackage{epsfig}

\makeatletter
\@addtoreset{equation}{section}

\makeatother

\newtheorem{theo}{\bf Theorem}[section]
\newtheorem{lem}[theo]{\bf Lemma}

\newtheorem{defi}[theo]{\bf Definition}
\newtheorem{rem}[theo]{\bf Remark}

\newenvironment{proof}[1][Proof]{\noindent\textbf{#1.} }{\hfill $\Box$}

\newcommand{\R}{\mathbb{R}}

\def\bar{\overline}

\allowdisplaybreaks[3]

\let\oldequation\equation
\let\oldendequation\endequation

\renewenvironment{equation}
{\linenomathNonumbers\oldequation}
{\oldendequation\endlinenomath}

\allowdisplaybreaks \numberwithin{equation}{section}
 \makeatletter
\begin{document}

\title{V-shaped transition fronts of monotone bistable reaction-diffusion systems in exterior 
domains}

\author{Yang-Yang Yan$^{a, b}$\ \ and\ \ 
Wei-Jie Sheng$^{a,}$\thanks{Corresponding author (E-mail address: 
shengwj09@hit.edu.cn).} \\~\\
\footnotesize{$^a$ School of Mathematics, Harbin Institute of Technology}, \\
\footnotesize{Harbin, Heilongjiang, 150001, People's Republic of China}\\
\footnotesize{$^b$ Aix Marseille Univ, CNRS, I2M, Marseille, France}
}

\date{}
\maketitle

\begin{abstract}

This paper investigates the propagation phenomena of a monotone bistable reaction–diffusion 
system in an exterior domain of $\mathbb{R}^2$. 
By constructing suitable sub- and supersolutions, we establish the existence and monotonicity of an entire solution originating from a V-shaped traveling front. 
It is further shown that, under the complete propagation condition, this entire solution eventually recovers its  V-shaped profile as time tends to $\infty$ after passing  the obstacle. 
In particular, we  show that the entire solution is a V-shaped transition front whose global mean 
speed coincides with the  planar wave speed.

\textbf{Keywords}: Transition fronts; sub- and supersolutions; bistable; reaction-diffusion systems.

\textbf{AMS Subject Classification (2020)}: {35C07; 35K57; 35B08.}

\end{abstract}


\section{Introduction} \label{introduction}
In this paper, we study the propagation phenomena of the following reaction-diffusion system:
\begin{align}\label{1.1}
\begin{cases}
u_t = D\Delta u + F(u), & t\in\mathbb{R},\ x\in\Omega = \mathbb{R}^2\setminus K,\\
\nu\cdot\nabla u = \mathbf{0}, & t\in\mathbb{R},\ x\in\partial\Omega = \partial K,
\end{cases}
\end{align}
where $u = (u_1,\cdots,u_m)$, $F(u) = (F_1(u),\cdots,F_m(u))$, $\mathbf{0} = (0,\cdots,0)\in\mathbb{R}^m$, $m\ge2$, $\nu = \nu(x)$ is the outward unit normal vector at the point $x$ on the boundary $\partial\Omega$, and the obstacle $K$ is a compact subset of $\mathbb{R}^2$ with smooth boundary.

For convenience, we  first introduce some notations.
 Let $\mathbf1= (1,\cdots,1)\in\R^m$. For any vectors $A=(a_1,\cdots,a_m)$ and $B=(b_1,\cdots,b_m)$, the symbol $A\ll B$ means $a_i<b_i$ for each $i=1,\cdots,m$, and $A\leq B$ means $a_i\leq b_i$ for each $i=1,\cdots,m$. The interval $[A,B]$ denotes the set of $V\in\mathbb R^m$ such that $A\leq V\leq B$, and $(A,B)$ denotes the set of $V\in\mathbb R^m$ such that $A\ll V\ll B$.

Throughout the paper, we assume that the following conditions are satisfied.
\begin{itemize}
  \item[(A1)] $D$ is a diagonal matrix of order $m$ with positive diagonal entries $D_i>0$ for $i=1,2,\ldots,m$.

  \item[(A2)] $F$ has two stable equilibria $\mathbf{0}$ and $\mathbf{1}$, that is, $F(\mathbf{0}) = F(\mathbf{1}) = \mathbf{0}$, and all eigenvalues of the Jacobian matrices $F'(\mathbf{0})$ and $F'(\mathbf{1})$ lie in the open left-half complex plane.

  \item[(A3)] There exist two positive vectors $R_0=(r_{01},\ldots,r_{0m})$, $R_1=(r_{11},\ldots,r_{1m})$ and two positive numbers $\lambda_0$, $\lambda_1$ such that 
$ F'(\mathbf{0}) R_0 \le -\lambda_0 R_0$ and $F'(\mathbf{1}) R_1 \le -\lambda_1 R_1$.

  \item[(A4)] The nonlinearity $F(u)$ is defined on an open domain $\mathcal{A} \supset [\mathbf{0},\mathbf{1}]$. Moreover, $F \in C^1([\mathbf{0},\mathbf{1}],\mathbb{R}^m)$ and satisfies
$ \frac{\partial F_i}{\partial u_j}(u) \ge 0 $ for all $u \in [\mathbf{0},\mathbf{1}]$ and 
$ 1 \le i \ne j \le m$.
\end{itemize}

The condition (A1) means that
the system \eqref{1.1} is not degenerate.
The condition {(A2)} indicates that \eqref{1.1} is a bistable system.
The condition (A3) implies the existence of two Perron–Frobenius directions $R_0$ and $R_1$ at the stable equilibria $\mathbf{0}$ and $\mathbf{1}$, which holds by the Perron–Frobenius theorem if $F'(\mathbf{0})$ and $F'(\mathbf{1})$ are irreducible.
The condition {(A4)} ensures that the system \eqref{1.1} is  cooperative
and allows us to apply the comparison principle within the order interval $[\mathbf0,\mathbf1]$.
 These conditions have been used to study the V-shaped traveling front of \eqref{1.1} in \cite{wangzhicheng2012}. The difference is that we no longer assume condition (H4) in \cite{wangzhicheng2012} because the sub- and supersolutions established in this paper are bounded from below by $\mathbf{0}$ and from above by $\mathbf{1}$.

In general, conditions (A1)-(A4) are not sufficient to guarantee the existence of a planar traveling front connecting $\mathbf{0}$ and $\mathbf{1}$ for system \eqref{1.1}. Therefore, we assume in this paper that when $\Omega = \mathbb{R}^2$ (with no boundary condition), system \eqref{1.1} admits a unique (up to shifts) planar traveling front of the form
\[
u(t,x) = \Phi(x\cdot e - ct) = \left(\Phi_1(x\cdot e - ct), \ldots, \Phi_m(x\cdot e - ct)\right),
\]
where $e \in \mathbb{S}^1$ is the propagation direction ($\mathbb{S}^1$ denotes the unit circle in $\mathbb{R}^2$), $c \in \mathbb{R}$ is the wave speed, and $\Phi: \mathbb{R} \to [\mathbf{0},\mathbf{1}]$ is the wave profile satisfying
\begin{equation}\label{p}
\begin{cases}
D_i \Phi_i''(\xi) + c \Phi_i'(\xi) + F_i(\Phi(\xi)) = 0, & \xi \in \mathbb{R},\\
\Phi_i'(\xi) < 0, & \xi \in \mathbb{R},\\
\Phi_i(-\infty) = 1, \ \Phi_i(\infty) = 0,
\end{cases}
\end{equation}
for each $i=1,\ldots,m$.
According to \cite[Chapter 3]{VVV}, there exist two positive constants $a$ and $b$ such that for each $i=1,\ldots,m$,
\begin{align}\label{1.a}
\begin{cases}
\Phi_i(\xi),\ |\Phi_i'(\xi)|,\ |\Phi_i''(\xi)| \leq a e^{-b\xi}, & \xi \ge 0,\\
1-\Phi_i(\xi),\ |\Phi_i'(\xi)|,\ |\Phi_i''(\xi)| \leq a e^{b\xi}, & \xi < 0.
\end{cases}
\end{align}
We further assume that there exists $\mathcal C>0$ such that, for each $i=1,\ldots,m$,
\begin{align}\label{impo1}
\Phi_i''(\xi) \le 0 \text{ for } \xi \in (-\infty,-\mathcal{C}], \quad 
\Phi_i''(\xi) \ge 0 \text{ for } \xi \in [\mathcal{C},+\infty).
\end{align}
In particular, if $F$ is of class $C^{1+\theta}$ on $\mathcal{A}$ for some $\theta \in (0,1)$, and the off-diagonal entries of the Jacobian matrices $F'(\mathbf{0})$ and $F'(\mathbf{1})$ are positive, then \eqref{impo1} is satisfied according to \cite[Theorem 5.5]{SW18}.
Planar traveling waves have been extensively studied due to their importance in modeling and analyzing processes in biology, epidemiology, chemistry and physics,
see \cite{Alex,conley,fife,Gardner,Kan-on,Kan-on2,OT,Tsai,VVV1,VVV} and references therein.

Besides planar traveling fronts, it has been established that \eqref{1.1} admits various types of nonplanar traveling fronts, including V-shaped fronts, pyramidal-shaped fronts and cylindrically symmetric fronts, see \cite{B1,B2,H2,H3,NT,NT1,T1,T2,T3,wangzhicheng2012,WB} and references therein. 
These nonplanar fronts exhibit more complex geometric structures and play an important role in understanding multidimensional propagation phenomena. 
In fact, the existence and stability of  V-shaped traveling fronts for the bistable system \eqref{1.1} in $\mathbb R^2$ were first proved by Wang \cite{wangzhicheng2012}. Call  $(x_1,x_2)$  the coordinate of $x\in\mathbb R^2$.
He showed that for each $s>c$, there exists a V-shaped traveling front
\begin{align}\label{VSHAPE}
u(t,x) = V(x_1,x_2-st) = V(y,z) = (V_1(y,z),\cdots,V_m(y,z))
\end{align}
of \eqref{1.1} with asymptotic lines $z=m_*|y|$ satisfying
\begin{equation}\label{Veq}
- D V_{yy} - D V_{zz} - s V_y - F(V) = \mathbf{0}, \quad (y,z) \in \mathbb R^2
\end{equation}
and
\begin{equation}\label{1.b}
\lim_{R\to\infty} \sup_{y^2+z^2 \ge R} \left| V(y,z) - \Phi\left(\frac{c}{s}(z-m_*|y|)\right) \right| = 0,
\end{equation}
where
\begin{align}\label{mstar}
m_* = \frac{\sqrt{s^2 - c^2}}{c}.
\end{align}
Furthermore, $V_z(y,z) \ll \mathbf{0}$ for all $(y,z) \in \mathbb R^2$. 
He also proved that the V-shaped traveling front is globally asymptotically stable. More precisely, if $v_0(y,z) \in C^2(\mathbb R^2, [\mathbf{0}, \mathbf{1}])$ satisfies
\begin{align*}
\lim_{R\to\infty} \sup_{y^2+z^2 > R^2} \left| v_0(y,z) - \Phi\left(\frac{c}{s}(z-m_*|y|)\right) \right| = 0,
\end{align*}
then the solution $v(t,y,z;v_0)$ of the Cauchy problem
\[
\begin{cases}
v_t - D v_{yy} - D v_{zz} - s v_y - F(v) = \mathbf{0}, & t>0, \ (y,z)\in \mathbb R^2,\\
v(0,y,z) = v_0(y,z), & (y,z)\in \mathbb R^2
\end{cases}
\]
satisfies
\begin{align}\label{stable}
\lim_{t\to\infty} \left\| v(t,\cdot,\cdot;v_0) - V(\cdot,\cdot) \right\|_{C(\mathbb R^2)} = 0.
\end{align}

Despite the variety of traveling fronts, they share a common feature in that they always converge uniformly to $\mathbf{0}$ or $\mathbf{1}$ far away from their level sets.
This simple but fundamental property motivates the introduction of a fully general concept, namely, transition fronts. For a detailed description, we refer to \cite{BH1,BH2,shen}. 
We now recall the definition of transition fronts. For any two subsets $A$ and $B$ in $\overline\Omega$, we set
\begin{equation*}
d_\Omega(A,B)=\inf\{d_\Omega(x,y) : (x, y)\in A\times B\}, \quad 
d_\Omega(x,A)=d_\Omega(\{x\},A),
\end{equation*}
where $d_\Omega$ is the geodesic distance in $\overline\Omega$. Consider two families of nonempty unbounded open subsets $\{\Omega^\pm_t\}_{t\in\R}\subset\R^2$ satisfying
\begin{equation}\label{subset1}
\forall t\in\mathbb R,\ \ 
\begin{cases}
\Omega_t^-\cap\Omega_t^+ = \emptyset,\
\partial\Omega_t^+\cap\Omega=\partial\Omega_t^-\cap\Omega =: \Gamma_t,\
\Omega_t^- \cup \Gamma_t \cup \Omega_t^+ = \Omega,\\
\sup\{d_\Omega(x,\Gamma_t) : x\in\Omega_t^-\} = \sup\{d_\Omega(x,\Gamma_t) : x\in\Omega_t^+\} = \infty
\end{cases}
\end{equation}
and
\begin{equation}\label{subset2}
\begin{cases}
\inf\left\{\sup\{d_\Omega(y,\Gamma_t) : y\in\Omega_t^+,\, d_\Omega(x,y)\leq r\} : t\in\R, x\in\Gamma_t\right\} \to \infty \ \text{as}\ r\to\infty,\\[1ex]
\inf\left\{\sup\{d_\Omega(y,\Gamma_t) : y\in\Omega_t^-,\, d_\Omega(x,y)\leq r\} : t\in\R, x\in\Gamma_t\right\} \to \infty \ \text{as}\ r\to\infty.
\end{cases}
\end{equation}
Moreover, the sets $\Gamma_t$ are assumed to be included in a finite number of graphs. Namely, there is an integer $n\geq1$ such that for each $t\in\mathbb R$, there exist $n$ open subsets $\omega_{i,t}\subset\R$ ($1\leq i\leq n$), $n$ continuous maps $\psi_{i,t}:\omega_{i,t}\to\R$ and $n$ rotations $R_{i,t}$ of $\R^2$, with
\begin{equation}\label{subset3}
\Gamma_t\subset\bigcup\limits_{1\leq i\leq n}R_{i,t}\Bigl(\{x\in\R^2 : x_1 \in \omega_{i,t},\, x_2 = \psi_{i,t}(x_1)\}\Bigr).
\end{equation}

\begin{defi}[\cite{BH1,BH2}]\label{da}
We call $u(t,x)$ a transition front connecting $\mathbf0$ and $\mathbf1$ of \eqref{1.1} if $u(t, x)$ is a classical solution of \eqref{1.1} and there exist sets $\{\Omega_t^\pm\}_{t\in\mathbb R}$ and $\{\Gamma_t\}_{t\in\mathbb R}$ satisfying \eqref{subset1}, \eqref{subset2} and \eqref{subset3} such that for any $\varepsilon>0$, there is a constant $S>0$ satisfying
\begin{equation*}
\begin{cases}
\forall\ (t,x)\in\mathbb R\times\Omega_t^+, \quad (d_\Omega(x,\Gamma_t)\geq S) \ \Rightarrow \ (u(t, x)\geq (1-\varepsilon)\cdot\mathbf1),\\
\forall\ (t,x)\in\mathbb R\times\Omega_t^-, \quad (d_\Omega(x,\Gamma_t)\geq S) \ \Rightarrow \ (u(t,x)\leq \varepsilon\cdot\mathbf1).
\end{cases}
\end{equation*}
Moreover, $u$ is said to have a global mean speed $\gamma (\geq0)$ if
\begin{equation*}
\frac{d_\Omega(\Gamma_t,\Gamma_s)}{|t-s|}\to\gamma \quad \text{as } |t-s|\to\infty.
\end{equation*}
\end{defi}

Transition fronts provide a broad framework for characterizing propagation behavior, which has motivated extensive studies on their qualitative properties in various reaction-diffusion equations. In their seminal work \cite{BH2}, Berestycki and Hamel analyzed the fundamental properties of transition fronts and introduced new types of transition fronts for certain 
time-dependent reaction-diffusion equations. 
For the bistable reaction-diffusion equation, Hamel \cite{H1} proved the existence and uniqueness of the global mean speed in $\mathbb{R}^N$ ($N\geq1$) and showed that this speed is independent of the shape of the fronts. He further established the existence of nonstandard transition fronts. By providing conditions (A1)-(A4), Sheng and Wang \cite{SW18} constructed a new transition front for system \eqref{1.1} that behaves like three planar fronts as $t \to -\infty$ and converges to a V-shaped traveling front as $t \to \infty$. They also proved that any transition front connecting $\mathbf0$ and $\mathbf1$ in $\R\times\R^N$ possesses a unique global mean speed coinciding with the planar wave speed $c$. For further results on transition fronts in the whole space, see \cite{BH1,BH2,bgw,d,hr,hr2,shen,sg,z1,z} and references therein.

The generality of transition fronts is also reflected in the spatial domains. 
In fact, the existence of transition fronts has been established in complex domains such as exterior domains, cylindrical domains and funnel-shaped domains, see \cite{BHM2009,GFL,Guomonobe,hz,LiLinlin} and references therein. 
Here we primarily focus on some studies in exterior domains. 
The interaction between a planar traveling front and an obstacle was first investigated by Berestycki, Hamel and Matano \cite{BHM2009}. 
For the scalar reaction-diffusion equation 
\begin{align*}
\begin{cases}
u_t=\Delta u +f(u), & t\in\R,~x\in\Omega=\R^N\setminus K,~(N\geq2),\\
u_\nu =0 , & t\in\R,~x\in\partial\Omega=\partial K
\end{cases}
\end{align*}
with a bistable nonlinearity $f$
($\exists\theta\in(0,1)$ such that $f(0)=f(\theta)=f(1)=0$, $f'(0)<0$, $f'(1)<0$,
$f<0$ on $(0, \theta)$, $f>0$ on $(\theta, 1)$, $\int_{0}^{1} f(s)\,d s>0$), they proved the existence of an entire solution $u(t,x)$ emanating from a planar traveling front in exterior domains $\Omega$. 
By providing the complete propagation condition (that is, $u(t,x)\to1$ locally uniformly in $x\in\overline\Omega$ as $t\to+\infty$), it was also proved that the $u(t,x)$ will recover to the same planar traveling front after passing the obstacle $K$, and that it is a transition front connecting $0$ and $1$ in $\R\times\overline\Omega$. 
They also indicated that the complete propagation condition holds when the obstacle is star-shaped\footnote{ 
The obstacle $K$ is called star-shaped if either $K=\emptyset$
or there is $x\in\operatorname{Int}(K)$ (the interior of $K$) such that $x+t(y-x) \in \operatorname{Int}(K)$ for all $y
\in \partial K$ and $t \in[0,1)$.} or directionally convex with respect to some hyperplane\footnote{ 
The obstacle $K$ is called directionally convex with respect to a hyperplane $H=\left\{x \in \mathbb{R}^{N}: x \cdot e=a\right\}$,
with $e \in \mathbb{S}^{N-1}$ and $a \in \mathbb{R}$, if for every line $\Sigma$ parallel to $e$, the set $K \cap \Sigma$ is
either a single line segment or empty and if $K \cap H$ is equal to the orthogonal projection of $K$ onto $H$.}. 
Later, Guo and Monobe \cite{Guomonobe} showed that \eqref{1.1} admits an entire solution
emanating from any transition front in $\R\times\R^N$ connecting $0$ and $1$. 
In addition, they proved that the entire solution emanating from a V-shaped traveling front will recover its V-shaped profile under the complete propagation condition. 
In subsequent works, the above results were extend to combustion reaction-diffusion equations, monotone bistable reaction-diffusion systems and  time-periodic bistable reaction-diffusion equations, see \cite{jwz,LiLinlin,slww,yss,ys} and references therein.

In this paper, we focus on the interaction between V-shaped traveling fronts and an obstacle $K$ for monotone bistable reaction-diffusion systems \eqref{1.1}. 
Under assumptions (A1)-(A4), we employ the method of sub- and supersolutions to establish the existence of an entire solution emanating from the V-shaped traveling front. 
By further utilizing the stability of the V-shaped traveling front, we prove that under the complete propagation condition, this entire solution continues to propagate in the form of the same V-shaped profile after passing the obstacle. 
In particular, we prove that the entire solution serves as a transition front connecting $\mathbf0$ and $\mathbf1$  in exterior domains,
 with its global mean speed equal to the planar wave speed $c$. 
Moreover, as shown in \cite{yss}, when the obstacle is star-shaped or directionally convex, the entire solution exhibits complete propagation.

A major difficulty in extending the analysis from the scalar case to the system setting lies in the unknown sign of the nonlinearity $F$ near the stable equilibria $\mathbf0$ and $\mathbf1$. This uncertainty complicates the construction of   sub- and supersolutions. However, under conditions (A2) and (A3), $F$ behaves analogously to a scalar bistable nonlinearity near the stable states $\mathbf0$ and $\mathbf1$ along the Perron-Frobenius directions $R_0$ and $R_1$. This structure enables the construction of sub- and supersolutions by appealing to these Perron-Frobenius directions and the V-shaped traveling front.

We now state the main results of this paper. 
The first theorem establishes the existence and monotonicity of the entire solution emanating from the V-shaped traveling front defined by \eqref{VSHAPE}.

\begin{theo}{\label{t1}}
There exists an entire solution $u(t, x):\R\times\bar\Omega\to(\mathbf0,\mathbf1)$ of \eqref{1.1} that is increasing in time and satisfies
\begin{equation}\label{1.5}
 u(t,x) - V(x_1, x_2 - st) \to \mathbf 0 \quad \text{uniformly in } x  \in \overline{\Omega} \text{ as } t \to -\infty.
\end{equation}
\end{theo}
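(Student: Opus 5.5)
We follow the strategy of Berestycki--Hamel--Matano and Guo--Monobe, adapted to systems through the Perron--Frobenius vectors $R_0,R_1$ of (A3). Since $V_z\ll\mathbf 0$ and $s>0$, the front $V(x_1,x_2-st)$ is increasing in $t$ and is close to $\mathbf 0$ near the obstacle for $t\to-\infty$. Without loss of generality $K\subset B_{R_K}$. Let $\zeta:\R^2\to[0,1]$ be a smooth cut-off with $\zeta=0$ on $B_{R_K}$ and $\zeta=1$ outside $B_{R_K+1}$. The plan is to construct, for $t\le -T$, a subsolution $\underline u$ and a supersolution $\overline u$ of \eqref{1.1} in $\Omega$ with $\mathbf 0\le\underline u\le\overline u\le\mathbf 1$. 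Both should satisfy $\underline u(t,\cdot),\overline u(t,\cdot)-V(x_1,x_2-st)\to\mathbf 0$ uniformly as $t\to-\infty$. The entire solution is then obtained by a limiting procedure.

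\emph{Construction.} Set $\xi(t)=\xi_0 e^{\mu t}$ with small $\mu>0$, and put
\[
\overline u(t,x)=\min\bigl\{V(x_1,x_2-st-\xi(t))+p(t)R_0,\ \mathbf 1\bigr\},\qquad \underline u(t,x)=\max\bigl\{\zeta(x)V(x_1,x_2-st+\xi(t))-p(t)R_1,\ \mathbf 0\bigr\},
\]
where $p(t)=\varepsilon e^{\mu t}$ and the $\min/\max$ are taken componentwise. The supersolution needs no boundary treatment near $K$, since $V$ is defined on all of $\R^2$. Its normal derivative on $\partial K$ is of size $|\nabla V(x_1,x_2-st)|$ there. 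By \eqref{1.a} and \eqref{1.b}, this quantity is $O(e^{-b'|t|})$ because $\partial K$ lies far below the front. The plan is to absorb this boundary error either by a small multiplicative correction, or by noting that for $t\le -T$ the function $V$ is within $O(e^{bst/2})$ of $\mathbf 0$ uniformly near $K$. In the latter case one replaces $\overline u$ near $K$ by the constant-in-$x$ function $p(t)R_0$, which is a supersolution there by (A3) since $F(pR_0)\le -\tfrac{\lambda_0}{2}pR_0$ for small $p$.

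\emph{Verification of the differential inequalities.} This is the main computation, done in three regions. In the region where $V$ is near $\mathbf 0$, or near $\mathbf 1$, we use $F(V+pR_0)-F(V)\le -\tfrac{\lambda_0}{2}pR_0$, or the analogous estimate with $R_1$. This follows from (A3) and continuity of $F'$, and it gives a margin $\sim p(t)$ that dominates the terms $\dot p$ and $\dot\xi V_z$ once $\mu<\lambda_0/2$. In the intermediate region, a neighbourhood of the V-shaped level set, we use $V_z\ll\mathbf 0$ together with a uniform lower bound $-V_z\ge\delta\mathbf 1$ on that strip. Then the term $-\dot\xi V_z$ dominates $Lp(t)$ if $\xi_0$ is large relative to $\varepsilon$. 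The delicate point, and the main obstacle, is that the strip is unbounded: a uniform positive lower bound for $-V_z$ along the asymptotic lines is needed. This should follow from \eqref{1.b}, from the corresponding convergence of derivatives, which I would derive via parabolic estimates from the stability result \eqref{stable} and \eqref{Veq}, and from $\Phi'\ll\mathbf 0$. For the subsolution, the cut-off error $D(\Delta\zeta V+2\nabla\zeta\cdot\nabla V)$ is supported in $B_{R_K+1}$, where $V(x_1,x_2-st)=O(e^{bst})$. It is therefore dominated by $p(t)R_1$-type terms provided we instead use $-p(t)$ times a suitable positive combination there. Near $K$, the subsolution $\underline u$ equals $\mathbf 0$, so the Neumann condition holds trivially.

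\emph{Limit.} Let $u_n$ solve \eqref{1.1} for $t>-n$ with $u_n(-n,\cdot)=\underline u(-n,\cdot)$. Since $\underline u$ is a subsolution, the comparison principle for cooperative systems (A4) gives $\underline u\le u_n\le \overline u$ on $[-n,-T]$, and $u_n$ is increasing in $n$. Parabolic estimates then give $u_n\to u$, an entire solution, where comparison on $[-T,\infty)$ follows from $u\le\mathbf 1$. The relation \eqref{1.5} follows from $\underline u\le u\le\overline u$ together with $\xi(t),p(t)\to0$ and $\sup_{B_{R_K+1}}|V(x_1,x_2-st)|\to0$. For time monotonicity, since $\underline u_t\ge \mathbf 0$ when $\underline u$ is positive (because $V_z\ll\mathbf0$, $s+\dot\xi>0$ and $\dot p>0$), each $u_n(-n+h,\cdot)\ge \underline u(-n+h,\cdot)\ge \underline u(-n,\cdot)=u_n(-n,\cdot)$. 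Comparison then gives $u_n(t+h)\ge u_n(t)$, and passing to the limit shows $u$ is nondecreasing in $t$. The strong maximum principle for cooperative systems upgrades this to $u_t\gg\mathbf 0$, and likewise gives $\mathbf 0\ll u\ll\mathbf 1$.
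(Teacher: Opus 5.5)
Your overall scheme (sub- and supersolutions for $t\le -T$, approximation from $t=-n$, passage to the limit) is essentially the paper's. However, the core construction fails because you use a \emph{fixed} Perron--Frobenius vector across the whole front.

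Assumption (A3) only gives $F'(\mathbf 0)R_0\le-\lambda_0R_0$ and $F'(\mathbf 1)R_1\le-\lambda_1R_1$. Nothing controls the sign of $F'(\mathbf 1)R_0$ or $F'(\mathbf 0)R_1$, and in general no positive vector works at both states. For instance, with $F'(\mathbf 0)=\begin{pmatrix}-1&10\\ 1/100&-1\end{pmatrix}$ and $F'(\mathbf 1)=\begin{pmatrix}-1&1/100\\ 10&-1\end{pmatrix}$, the condition $F'(\mathbf 0)R\ll\mathbf 0$ forces $r_1>10r_2$, while $F'(\mathbf 1)R\ll\mathbf 0$ forces $r_2>10r_1$.

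Now take $\overline u=\min\{V+pR_0,\mathbf 1\}$ at points deep behind the front where $\mathbf 1-V$ is small but still exceeds $pR_0$, so the truncation is inactive. There the residual $(\overline u_i)_t-D_i\Delta\overline u_i-F_i(\overline u)$ is approximately $\dot\xi\,|\partial_zV_i|+\mu p\,r_{0i}-p\,(F'(\mathbf 1)R_0)_i$. Since $\dot\xi=\mu\xi_0e^{\mu t}$ and $|\partial_zV|$ can be taken below any fixed threshold, any positive component of $F'(\mathbf 1)R_0$ makes this negative once $\mu$ is small. Symmetrically, the subsolution with $-pR_1$ fails ahead of the front, where $p\lesssim V\ll\mathbf 1$. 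Your remark about a ``suitable positive combination'' near $K$ points the right way, but the switch is needed along the whole front.

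The missing ingredient is the paper's variable vector $Q(\xi)$ from \eqref{dpq}. It satisfies $Q\equiv Q_1=\eta_1R_1$ behind the front and $Q\equiv Q_0=R_0$ ahead of it, and is composed with $\xi^\pm=z^\pm-\psi(x_1)$, where $\psi$ is the convex profile with $m_*|x_1|\le\psi\le m_*|x_1|+m_0$, $|\psi'|\le m_*$ and $\psi''$ bounded. Then $Q'(\xi)$, $\Delta Q(\xi)$ and $\nabla Q\cdot\nabla\zeta$ are bounded and supported in the strip where $-V_z\ge\kappa$. They are paid for by the shift term $w\eta\kappa e^{\eta t}$, which is the role of \eqref{om}. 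Your idea that $-\dot\xi V_z$ dominates on the strip is the right mechanism, but it must absorb these switching terms.

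Two further steps need repair.

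First, the Neumann condition for $\overline u$ is left open, and your second option cannot be implemented. Since $p(t)R_0\le\overline u$ everywhere, $\min\{\overline u,p(t)R_0\}=p(t)R_0$, so the constant cannot be glued in near $K$ only. What works is your first option made precise as in the paper. Multiply the perturbation by $\zeta\ge1$ with $\nu\cdot\nabla\zeta=1$ on $\partial\Omega$ and $\|\Delta\zeta/\zeta\|_{L^\infty}$ small, so the extra $\Delta\zeta$ term is absorbed by the Perron--Frobenius margin near $\mathbf 0$ and $\mathbf 1$ and by the shift term on the strip. Its amplitude times $q_*$ must dominate $\sup_{\partial K}|\nabla V(x_1,x_2-st)|$. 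This needs an exponential decay rate for $\nabla V$ near $K$ as $t\to-\infty$, which \eqref{1.b} does not give. The paper gets it from Lemma~\ref{tran}, and your $\mu$ must lie below that rate. Your cut-off treatment of the subsolution near $K$ is fine.

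Second, the monotonicity argument is wrong as written. We have $\partial_t(-p(t)R_1)=-\mu pR_1\ll\mathbf 0$. Deep behind the front $\underline u\approx\mathbf 1-pR_1>\mathbf 0$ while $|V_z|$ is exponentially small, so $\underline u(-n+h,\cdot)\ge\underline u(-n,\cdot)$ fails there. The correct statement is that, for each fixed $t_0\le -T$, $\underline u(t_0,\cdot)$ is a \emph{stationary} subsolution. Using $D\Delta V+F(V)=-sV_z\ge\mathbf 0$ in place of a time derivative, $D\Delta\underline u+F(\underline u)\ge\mathbf 0$ follows from the same three-region computation, now without the harmful $-\dot p$ term. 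Then $u_n(-n+h,\cdot)\ge\underline u(-n,\cdot)=u_n(-n,\cdot)$, so $u_n$ is nondecreasing in $t$.

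Starting from the subsolution, as you do, is the right choice here. The paper instead starts from $V^+(-n,\cdot)$ and infers $(u_n)_t(-n,\cdot)\gg\mathbf 0$ from $\partial_tV^+>\mathbf 0$. That inference is not valid: far ahead of the front, $D\Delta V^++F(V^+)$ equals $-sV_z$ plus a negative term of the size of the perturbation.
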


To prove Theorem \ref{t1}, we require the exponential decay properties of the V-shaped traveling front. 
Since the V-shaped traveling front is a homogeneous transition front connecting $\mathbf0$ and $\mathbf1$ (in the sense of Definition \ref{da} with $\Omega = \mathbb{R}^N$), 
we first establish the time-monotonicity and exponential decay properties of any homogeneous transition front connecting $\mathbf0$ and $\mathbf1$ in Section \ref{s2}. 
These general results then apply directly to the V-shaped traveling front.

Next, we consider the large-time behavior of the entire solution $u(t, x)$ given in Theorem \ref{t1}. 
In fact, for the entire solution arising from the V-shaped traveling front, the V-shaped profile is preserved as $t\to\infty$ under the complete propagation condition.

\begin{theo}{\label{t2}}
Let $u(t,x)$ be the entire solution of \eqref{1.1} given in Theorem \ref{t1}. Assume that $u(t,x)$  propagates completely in the sense of
\begin{equation}\label{1.6}
 u(t,x) \to \mathbf 1 \quad \text{ locally uniformly in } x \in \overline{\Omega} \text{ as } t \to \infty.
\end{equation}
Then
\begin{equation}\label{1.7}
 u(t,x) - V(x_1, x_2 - st) \to \mathbf 0 \quad \text{uniformly in } x \in \overline{\Omega} \text{ as } t \to \infty.
\end{equation}
Furthermore, there holds
\begin{equation}\label{xinfty}
 u(t,x) - V(x_1, x_2 - st) \to \mathbf 0 \quad \text{uniformly in } t\in\R \text{ as } |x| \to \infty.
\end{equation}
\end{theo}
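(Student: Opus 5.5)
We follow the strategy of Berestycki--Hamel--Matano and Guo--Monobe: use complete propagation to build, at some large time, an initial datum that is close to $V$ globally, and then transfer it to the whole plane where the stability result \eqref{stable} for the V-shaped front applies.

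\textbf{Step 1: $u$ stays close to $V$ far from the obstacle, uniformly in time.} This is the second statement \eqref{xinfty}, which we prove first. We reuse the sub- and supersolutions built for Theorem \ref{t1}. These have the form
\[
V(x_1,x_2-st\pm\xi(t))\pm q(t)R_{0/1}\quad\text{(suitably cut off near $K$)},
\]
with $\xi,q$ bounded and $q(t)\to0$ as $t\to-\infty$. They show that $u$ is within $\varepsilon$ of a shifted $V$ except in a region near $K$. Parabolic estimates together with the exponential decay of $V-\Phi$-profiles give closeness for $|x|$ large, at least for $t\le T_0$.

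For $t\ge T_0$ the front has passed $K$, so near $K$ we have $u\approx\mathbf1$ by \eqref{1.6}. Far away, $u$ is governed by the whole-space dynamics. A comparison on $\{|x|\ge R\}$ then controls the error for $|x|$ large uniformly in $t$. The boundary data on $|x|=R$ are close to $\mathbf1$ after $T_0$ and close to $V$ before $T_0$, and $V$ itself is close to $\mathbf1$ on $|x|=R$ once $t$ is large.

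\textbf{Step 2: $u$ converges to $V$ as $t\to\infty$.} Fix $\varepsilon>0$. Choose $R$ from Step 1 so that $|u-V|\le\varepsilon$ for $|x|\ge R$, for all $t$. By \eqref{1.6}, choose $T$ so that $u(t,x)\ge(1-\varepsilon)\mathbf1$ on $\{|x|\le R\}\cap\overline\Omega$ for $t\ge T$. Since $V(x_1,x_2-st)\to\mathbf1$ uniformly on $|x|\le R$ as $t\to\infty$, we get $|u(T,\cdot)-V(\cdot,\cdot-sT)|\le C\varepsilon$ on $\overline\Omega$.

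Extend $u(T,\cdot)$ to $\mathbb R^2$ by $\mathbf1$ on $K$. Sandwich this extension in the moving frame between
\[
V(y,z\pm\zeta)\pm\varepsilon R_{1}\text{-type perturbations.}
\]
Enlarge these into genuine whole-space sub- and supersolutions of the form
\[
V(y,z\pm\zeta(t))\pm \delta e^{-\lambda t}R,
\]
with $\zeta(t)$ bounded by $C\varepsilon$; this uses (A3) exactly as in Theorem \ref{t1}. For $u$ on $\Omega$, the Neumann condition on $\partial K$ is satisfied by the subsolution only approximately. We handle this by using $\min(\cdot,\mathbf1)$ and $u\approx\mathbf1$ near $K$: the subsolution can be truncated by $(1-\varepsilon)\mathbf1$-type constants near $K$. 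Alternatively we compare only on $\{|x|\ge R\}$, controlling the boundary by $u\ge(1-\varepsilon)\mathbf1$ there.

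This gives $\limsup_{t\to\infty}\|u(t,\cdot)-V(\cdot,\cdot-st)\|\le C\varepsilon$. Letting $\varepsilon\to0$ yields \eqref{1.7}. Alternatively, we apply \eqref{stable} directly to the whole-space solutions started from $\min/\max$ modifications of $u(T)$, which converge to $V$ and bound $u$ by comparison.

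\textbf{Main obstacle.} The hardest part is the comparison near $\partial K$ in Step 2: the whole-space $V$ does not satisfy the Neumann condition on $\partial K$. We expect to resolve it by exploiting $u\approx\mathbf1$ near $K$ for large time, using the lower bound $u\ge(1-\varepsilon)\mathbf1$ there instead of an exact subsolution. The supersolution side is easy because $\mathbf1$ is a supersolution.
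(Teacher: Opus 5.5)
There is a genuine gap, and it lies in your Step 1, not near $\partial K$.

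First, Step 2 does no real work. Suppose $|u-V|\le\varepsilon$ on $\{|x|\ge R\}$ for all $t\in\mathbb R$. Then \eqref{1.6}, together with the uniform convergence $V(x_1,x_2-st)\to\mathbf 1$ on $\overline{B(0,R)}$ as $t\to\infty$, puts both $u$ and $V$ in $[(1-\varepsilon)\mathbf 1,\mathbf 1]$ on that ball for large $t$. That already gives \eqref{1.7}. So your proposal reduces the whole theorem to \eqref{xinfty} uniformly in $t$, and your argument for that does not hold up:
\begin{itemize}
\item The barriers of Lemma \ref{l2.2} carry the factor $e^{\eta t}$ and are valid only for $t\le\bar T$. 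That is a very negative time, at which the front has not yet reached $K$. By contrast, \eqref{1.6} gives $u\approx\mathbf 1$ near $K$ only after some large positive time.
\item While the front crosses the obstacle, $u$ on $\{|x|=R\}$ is close neither to $V$ nor to $\mathbf 1$. So a comparison on $\{|x|\ge R\}$ with $V$-type barriers cannot be closed. Barriers that dominate those boundary values must be shifted by an $O(1)$ amount. They then yield only localization of the front ($u\approx\mathbf 1$ far behind, $u\approx\mathbf 0$ far ahead, as in Lemma \ref{U1}), not $\varepsilon$-closeness.
\item More fundamentally, ``far from $K$'' does not mean ``unperturbed''. The non-small defect created as the front sweeps through $K$ is carried by the front; in the moving frame, perturbations of the arms drift toward the vertex. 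For large $t$ the vertex $(0,st)$ lies in $\{|x|\ge R\}$. Showing $u\approx V$ there is the core of the theorem and requires the asymptotic stability \eqref{stable}. Your barriers $V(y,z\pm\zeta(t))\pm\delta e^{-\lambda t}R$ only propagate an error that is already small.
\item Your alternative route through \eqref{stable} is the right idea. But its hypothesis, convergence of $u(T,\cdot)$ to the V-profile at infinity, is again supplied only by the unproved Step 1.
\end{itemize}

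The paper argues in the opposite order:
\begin{itemize}
\item Lemma \ref{U1} gets localization for $t\ge\overline T$ from the chain-of-balls Lemma \ref{key}, \eqref{1.6}, $u_t\gg\mathbf 0$, and the barriers $\underline w_1,\overline w_1$.
\item Lemma \ref{u-V} gets $\varepsilon$-closeness away from the vertex. It shows that every limit of $u(t,\cdot+x_n)$ along an arm is exactly a planar front (the Liouville-type claim \eqref{Claim}).
\item Then $u(T,\cdot)$ is squeezed between the Neumann-compatible barriers $\widetilde w^\pm$ of Lemmas \ref{sub4} and \ref{super2}. Applying \eqref{stable} to whole-plane limits of time translates gives \eqref{1.7}.
\item Only afterwards is \eqref{xinfty} derived. \eqref{1.5} and \eqref{1.7} reduce it to a bounded time interval, handled by finite-time barriers with a large shift $\rho_2$ plus the same Liouville argument.
\end{itemize}

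Your boundary treatment also needs repair. The minimum of a subsolution with a constant is in general not a subsolution, and for the system $(1-\varepsilon)\mathbf 1$ need not be a subsolution at all. Comparing on $\{|x|\ge R\}$ instead would require an exponential rate for $u\to\mathbf 1$ near $K$, which \eqref{1.6} does not provide. The paper uses the corrector $\zeta(x)$ with $\nu\cdot\nabla\zeta=1$ on $\partial\Omega$, together with the decay \eqref{V22} of $\nabla V$ behind the front.
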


\begin{rem}
By \eqref{p}, \eqref{1.b}, Theorem \ref{t1}, \eqref{1.7}, and \eqref{xinfty}, it follows that the entire solution $u(t,x)$ obtained in Theorem \ref{t2} constitutes a transition front of system \eqref{1.1} in the sense of Definition \ref{da}, with
\[
\Omega_t^\pm = \left\{ x\in\Omega : \pm (x_2 - st - m_*|x_1|) < 0 \right\}, 
\quad
\Gamma_t = \left\{ x\in\Omega : x_2 - m_*|x_1| = st \right\}.
\]
Let $2\alpha \in (0,\pi)$ be the angle of the V-shaped traveling front. From \cite{wangzhicheng2012}, one has
\[
\alpha = \mathrm{arccot}
 m_*
 \ \text{ and }\ \ s = \frac{c}{\sin \alpha}.
\]
For any large $t_1$ and $t_2$, the interfaces $\Gamma_{t_1}$ and $\Gamma_{t_2}$ are far from the obstacle, so that
\[
d_\Omega(\Gamma_{t_1}, \Gamma_{t_2}) = s |t_1 - t_2| \sin \alpha = c |t_1 - t_2|.
\]
Hence, $u(t,x)$ is a transition front of system \eqref{1.1} connecting $\mathbf0$ and $\mathbf1$ in the exterior domain, with a global mean speed equal to the planar wave speed $c$.

\end{rem}

The paper is organized as follows. In Section \ref{s2}, we introduce several auxiliary notations and lemmas that will be used throughout this paper. 
Section \ref{s3} is devoted to establishing the existence and monotonicity of entire solutions emanating from the V-shaped traveling front, that is, we prove Theorem \ref{t1}. 
In Section \ref{s4}, we investigate the large-time behavior of the entire solution originating from the V-shaped traveling front under the assumption of complete propagation, namely, we prove Theorem \ref{t2}.

\section{Prelimilaries}\label{s2}
We first introduce some preliminaries commonly used in this paper.
Let us begin by recalling the notions of sub- and supersolutions.

\begin{defi}\label{dss}
Let $u=(u_1,\dots,u_m)$ be a vector-valued function with $u_i\in C^{0,1}(\R\times\bar\Omega)\cap C^{1,2}(\R\times\Omega)$ for each $i=1,\dots,m$.
If $u$ satisfies
\begin{align*}
\begin{cases}
u_t - D\Delta u - F(u) \leq \mathbf 0 \ ({\rm resp.}\ \geq \mathbf 0) &\ \text{ in }\ \R\times\Omega,\\
\nu\cdot\nabla u \leq \mathbf 0 \ ({\rm resp.}\ \geq \mathbf 0) &\ \text{ on }\ \R\times\partial\Omega,
\end{cases}
\end{align*}
then $u$ is called a subsolution $($resp. supersolution$)$ of \eqref{1.1} in $\R\times\bar\Omega$.
Moreover, if $u$ and $v$ are both subsolutions $($resp. supersolutions$)$ of \eqref{1.1}, then their componentwise maximum $($resp. minimum$)$
$\max(u,v)$ $(\text{resp. } \min(u,v))$
is also a subsolution $($resp. supersolution$)$ of \eqref{1.1}.
\end{defi}

The following comparison principle is standard, see \cite{FT,PM}.
\begin{lem}\label{com}
Let $\underline u, \bar u \in [\mathbf 0, \mathbf 1]$ be a subsolution and a supersolution of \eqref{1.1}, respectively.
If $\underline u(0,x) \leq \bar u(0,x)$ for all $x\in\Omega$, then
\[
\underline u(t,x) \leq \bar u(t,x) \quad \text{for all } (t,x) \in [0,+\infty)\times\bar\Omega.
\]
\end{lem}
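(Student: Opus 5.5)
The plan is the standard weak-maximum-principle argument for cooperative systems, run on the difference $w=\underline u-\bar u$ with the quasi-monotonicity (A4) supplying the sign structure. First I would remove the sign-indefinite diagonal part of the reaction. Since $F\in C^1([\mathbf0,\mathbf1])$, there is $L>0$ with $|\partial F_i/\partial u_j|\le L$ on $[\mathbf0,\mathbf1]$. The sub- and supersolution take values in the convex set $[\mathbf0,\mathbf1]$, so the mean value theorem gives
\[
F_i(\underline u)-F_i(\bar u)=\sum_{j} c_{ij}(t,x)\,w_j ,\qquad c_{ij}=\int_0^1\frac{\partial F_i}{\partial u_j}\bigl(\bar u+\tau w\bigr)\,d\tau .
\]
Here $c_{ij}\ge0$ for $i\neq j$ by (A4), and $|c_{ij}|\le L$. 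Hence $w$ satisfies, componentwise,
\[
\partial_t w_i-D_i\Delta w_i-\sum_j c_{ij}w_j\le 0 \ \text{ in } (0,\infty)\times\Omega,\qquad \nu\cdot\nabla w_i\le0 \ \text{ on } (0,\infty)\times\partial\Omega,\qquad w(0,\cdot)\le\mathbf0 .
\]
Where $\underline u,\bar u$ are only Lipschitz (maxima or minima of smooth pieces, as allowed in Definition \ref{dss}), I would read these inequalities in the viscosity or distributional sense. Equivalently, at a first contact point the kinks have the favorable orientation: a max-type subsolution is touched from below, a min-type supersolution from above.

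Next I would pass to $\tilde w=e^{-\lambda t}w$ with $\lambda>mL$. Then
\[
\partial_t\tilde w_i-D_i\Delta\tilde w_i+(\lambda-c_{ii})\tilde w_i-\sum_{j\ne i}c_{ij}\tilde w_j\le0 ,
\]
where $\lambda-c_{ii}>(m-1)L$. Because $\Omega$ is unbounded, I would localize the maximum with a penalization. Suppose $\sup_{[0,T]\times\bar\Omega}\max_i \tilde w_i=\delta>0$, and set $\hat w_i=\tilde w_i-\varepsilon\,\eta(x)-\varepsilon t$, where $\eta(x)=\sqrt{1+|x|^2}$ (smoothed near $K$) has bounded gradient and Hessian. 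For small $\varepsilon$ the quantity $\max_i\hat w_i$ is still positive somewhere, and it tends to $-\infty$ as $|x|\to\infty$. So it attains a positive maximum at some $(t_0,x_0,i)$ with $t_0>0$, since $\hat w\le0$ at $t=0$.

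The contact point must lie in the interior. Indeed $x_0\notin\partial\Omega$: by the Hopf lemma combined with $\nu\cdot\nabla w_i\le 0$ it cannot sit there, or one modifies $\eta$ so that $\nu\cdot\nabla\eta>0$ on $\partial K$, which is possible since $K$ is compact and smooth. At the interior maximum we have $\partial_t\hat w_i\ge0$ and $\Delta\hat w_i\le0$. Moreover $\tilde w_j\le\tilde w_i$ for $j\neq i$, and $c_{ij}\ge0$, so
\[
\sum_{j\ne i}c_{ij}\tilde w_j\le (m-1)L\,\tilde w_i .
\]
Plugging into the inequality gives
\[
0\ge \varepsilon+\varepsilon D_i\Delta\eta\ldots+\bigl(\lambda-c_{ii}-(m-1)L\bigr)\tilde w_i>0
\]
once $\varepsilon$ is small relative to $\delta$. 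This is a contradiction, so $w\le\mathbf0$ on $[0,T]$ for every $T$.

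The main obstacle is not the algebra. It is the bookkeeping for the unbounded exterior domain with the Neumann boundary: one needs a penalization $\eta$ with $\nu\cdot\nabla\eta\ge0$ on $\partial K$ and bounded derivatives, which also has to handle the lack of $C^{1,2}$ regularity at kinks. Alternatively, I would simply cite the standard comparison results for quasimonotone systems \cite{FT,PM}, whose proofs are exactly the argument above.
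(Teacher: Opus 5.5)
Your proposal is correct and is essentially the paper's approach: the paper gives no argument beyond citing the standard comparison principle for cooperative systems in \cite{FT,PM}, and you have written out exactly the maximum-principle proof behind it (mean-value linearization with $c_{ij}\ge0$ for $i\neq j$, the weight $e^{-\lambda t}$, and penalization at infinity). One caveat at a boundary maximum: with the unmodified $\eta=\sqrt{1+|x|^2}$, the Hopf route only yields $e^{-\lambda t}\,\nu\cdot\nabla w_i>\varepsilon\,\nu\cdot\nabla\eta$, and $\nu\cdot\nabla\eta$ can be negative on $\partial K$, so rely on your second option instead, e.g.\ $\eta=\sqrt{1+|x|^2}+A\tilde\zeta$ with $A>1$ and $\tilde\zeta$ as in Section \ref{s3}; then $\nu\cdot\nabla\hat w_i<0$ on $\partial\Omega$ already contradicts maximality, and the interior penalty term, which enters as $-\varepsilon D_i\Delta\eta$ rather than $+\varepsilon D_i\Delta\eta$, is harmlessly $O(\varepsilon)$.
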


Next, we introduce some notations and parameters. 
It follows from (A3) that there exist two sufficiently small positive constants $\eta_0$ and $\eta_1$ such that 
$\eta_0 R_0 \ll R_1$ and $\eta_1 R_1 \ll R_0$.  
Let 
$P_0 := \eta_0 R_0 = (p_1^0, \dots, p_m^0)$,  
$P_1 := R_1 = (p_1^1, \dots, p_m^1)$,
$Q_0 := R_0 = (q_1^0, \dots, q_m^0)$ and 
$Q_1 := \eta_1 R_1 = (q_1^1, \dots, q_m^1)$.
By conditions (A2) and (A3), there exist irreducible constant matrices 
$A_0 = (\mu_{ij}^0)_{m\times m}$ and $A_1 = (\mu_{ij}^1)_{m\times m}$ such that 
$\frac{\partial F_i}{\partial u_j}(\mathbf 0) < \mu_{ij}^0$
and 
$\frac{\partial F_i}{\partial u_j}(\mathbf 1) < \mu_{ij}^1 $
for each 
$ i,j = 1, \dots, m$,
and
$A_0 P_0 \ll -\tfrac{1}{2}\lambda_0 P_0$, 
$A_0 Q_0 \ll -\tfrac{1}{2}\lambda_0 Q_0$, 
$A_1 P_1 \ll -\tfrac{1}{2}\lambda_1 P_1$, 
$A_1 Q_1 \ll -\tfrac{1}{2}\lambda_1 Q_1$,
where the principal eigenvalues of $A_0$ and $A_1$ are negative.

Define a $C^2$ increasing function $\chi(\varsigma)$ satisfying
\[
\chi(\varsigma) \equiv 1 \ \text{for all } \varsigma \ge 1 
\ \text{ and }\ \
\chi(\varsigma) \equiv 0 \ \text{for all } \varsigma \le 0.
\]
We then introduce the positive vector functions
\begin{equation}\label{dpq}
P(\varsigma) = (p_1(\varsigma), \dots, p_m(\varsigma))
\ \text{ and }\ \
Q(\varsigma) = (q_1(\varsigma), \dots, q_m(\varsigma)),
\end{equation}
where the components are defined by
\[
p_i(\varsigma) = \chi(\varsigma)\, p_i^0 + (1-\chi(\varsigma))\, p_i^1
\ \text{ and }\ \
q_i(\varsigma) = \chi(\varsigma)\, q_i^0 + (1-\chi(\varsigma))\, q_i^1,
\]
for each $i = 1, \dots, m$.
It is straightforward to verify that
\begin{align}\label{pq}
& p_i(\varsigma) \in [ p_i^0,p_i^1], \ 
q_i(\varsigma) \in [ q_i^1,q_i^0], \ 
p_i'(\varsigma) \le 0, \  q_i'(\varsigma) \ge 0\quad
\quad \text{for all } \varsigma \in \mathbb R,\\
& p_* := \min_{1 \le i \le m} \inf_{\varsigma \in \mathbb R} p_i(\varsigma) > 0, \ 
q_* := \min_{1 \le i \le m} \inf_{\varsigma \in \mathbb R} q_i(\varsigma) > 0, \notag\\
& p^* := \max_{1 \le i \le m} \sup_{\varsigma \in \mathbb R} p_i(\varsigma) > 0, \ 
q^* := \max_{1 \le i \le m} \sup_{\varsigma \in \mathbb R} q_i(\varsigma) > 0, \notag\\
& 
\max_{1 \le i \le m} \sup_{\varsigma \in \mathbb R} 
\big( |p_i'(\varsigma)| + |p_i''(\varsigma)| + |q_i'(\varsigma)| + |q_i''(\varsigma)| \big) \le M\quad\quad \text{ for some $M>0$}\notag.
\end{align}

For any point $v \in \mathbb R^m$ and any constant $r > 0$, let $B_m(v,r)$ denote the open Euclidean ball with center $v$ and radius $r$ in $\mathbb R^m$.
By the definitions of $\mu_{ij}^0$ and $\mu_{ij}^1$, there exist constants $0 < \varepsilon_0 < \min\left(\frac{p_*}{4}, \frac{q_*}{4}\right)$ and $\varpi > 0$ such that
\begin{align}\label{muij}
& \frac{\partial F_i}{\partial u_j}(u) \le \mu_{ij}^0, \quad 
u \in B_m(\mathbf 0, 4\varepsilon_0) \cap [\mathbf 0, \mathbf 1], \quad i,j=1,\dots,m,  \\
& \frac{\partial F_i}{\partial u_j}(u) \le \mu_{ij}^1, \quad 
u \in B_m(\mathbf 1, 4\varepsilon_0) \cap [\mathbf 0, \mathbf 1], \quad i,j=1,\dots,m, \notag \\
& \sum_{j=1}^{m} \mu_{ij}^0 w_j \le -\varpi w_i, \quad
w \in \big(\mathbb R^m_+ \cap B_m(P_0, 2\varepsilon_0)\big) \cup \big(\mathbb R^m_+ \cap B_m(Q_0, 2\varepsilon_0)\big), \notag \\
& \sum_{j=1}^{m} \mu_{ij}^1 w_j \le -\varpi w_i, \quad
w \in \big(\mathbb R^m_+ \cap B_m(P_1, 2\varepsilon_0)\big) \cup \big(\mathbb R^m_+ \cap B_m(Q_1, 2\varepsilon_0)\big),\notag
\end{align}
where $w = (w_1, \dots, w_m)$.
 Since $F\in C^1([\mathbf0,\mathbf1],\R^m)$, there exists a constant  $\Lambda>0$ such that
\begin{equation}{\label{LA}}
\Lambda=\max\limits_{i=1,\cdots, m}\left(\sup\left\{\sum\limits_{j=1}^{m}\left|\frac{\partial F_i}{\partial u_j}(u)\right|:u\in[\mathbf0,\mathbf1]\right\}\right).
\end{equation}
Denote
\begin{equation}\label{D}
\overline D=\max\limits_{i=1,\cdots, m}D_i>0\ \text{ and }\ \underline D=\min\limits_{i=1,\cdots, m}D_i>0.
\end{equation}

Let us introduce a key lemma established in \cite[Lemma 2.3]{yss}.  
For any point $v \in \mathbb R^2$ and any constant $r > 0$, let $B(v,r)$ denote the open Euclidean ball with center $v$ and radius $r$ in $\mathbb R^2$.
Fix a point $x_0 \in \mathbb{R}^2$ and a constant $R > 0$ such that $B(x_0, R) \subset \Omega$.  
For any $\delta > 0$, let 
$
v_{x_0,R}^\delta = \big(v_{x_0,R}^{\delta,1}, \cdots, v_{x_0,R}^{\delta,m}\big)
$
denote the solution of the Cauchy problem
\begin{align}\label{vr}
\begin{cases}
\begin{aligned}
&(v_{x_0,R}^\delta)_t(t,x) - D \Delta v_{x_0,R}^\delta(t,x) = F(v_{x_0,R}^\delta(t,x)),
&& t > 0,~ x \in \Omega,\\
&\nu \cdot \nabla v_{x_0,R}^\delta(t,x) = \mathbf{0},
&& t > 0,~ x \in \partial\Omega,\\
&v_{x_0,R}^\delta(0,x) =
\begin{cases}
(1-\delta q_*) \cdot \mathbf{1}, & x \in \overline{B(x_0,R)},\\
\mathbf{0}, & x \in \overline{\Omega \backslash B(x_0,R)}.
\end{cases}
\end{aligned}
\end{cases}
\end{align}

\begin{lem}\label{key}
For any small $\delta > 0$, there exist four positive constants $R_1 = R_1(\delta)$, $R_2 = R_2(\delta)$, $R_3 = R_3(\delta)$, and $\hat T = \hat T(\delta)$ satisfying $R_1 < R_2 < R_3$ such that, if $B(x_0,R_3) \subset \Omega$, then
\[
v_{x_0,R_1}^\delta(\hat T, \cdot) \ge (1-\delta q_*) \cdot \mathbf{1} \quad \text{in } \overline{B(x_0,R_2)} (\subset \Omega).
\]
\end{lem}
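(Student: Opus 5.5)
The plan is to compare $v_{x_0,R_1}^\delta$ with an explicit, radially expanding subsolution built from the planar profile $\Phi$ and centered at $x_0$. The comparison is done only inside the ball $B(x_0,R_3)$, so the obstacle never enters the argument. We use that the planar speed satisfies $c>0$, as is implicit in the existence of V-shaped fronts with $s>c$. After translating, assume $x_0=0$ and write $r=|x|$. For $t\in[0,\hat T]$ we look for a subsolution
\[
w(t,x)=\max\Bigl\{\Phi\bigl(r-\xi(t)\bigr)-\sigma(t)\,Q\bigl(r-\xi(t)\bigr),\ \mathbf 0\Bigr\},
\qquad \sigma(t)=\sigma_0 e^{-\beta t},
\]
with the following choices.
\begin{itemize}
\item $Q(\cdot)$ is the interpolated Perron–Frobenius vector from \eqref{dpq}, taken with an argument rescaled so that it equals $Q_1$ where $\Phi\approx\mathbf 1$ and $Q_0$ where $\Phi\approx\mathbf 0$.
\item $\sigma_0$ is comparable to $\delta$, and $\beta>0$ is small.
\item $\xi$ solves $\xi'(t)=c-\frac{\overline D+1}{\xi(t)}-K\sigma(t)$, with $\xi(0)=\xi_0$ large, so that $\xi'\ge c/2$ on the relevant time interval.
\end{itemize}
Since $\mathbf 0$ is a solution, the componentwise maximum in the definition is again a subsolution (Definition \ref{dss}).

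The first key step is the subsolution inequality for the inner function $\Phi(r-\xi)-\sigma Q$. In radial form, the $i$-th component of $w_t-D\Delta w-F(w)$ produces, besides the terms cancelled by \eqref{p}, the residual
\[
-\Phi_i'\bigl(\xi'-c+D_i/r\bigr)+\sigma'q_i+O(\sigma|q_i'|+\sigma|q_i''|)+\bigl[F_i(\Phi)-F_i(\Phi-\sigma Q)\bigr].
\]
We split $\mathbb R$ in the variable $\zeta=r-\xi$ into three regions, using \eqref{1.a}.
\begin{itemize}
\item Near $\mathbf 1$ ($\zeta\le -L$) and near $\mathbf 0$ ($\zeta\ge L$): here \eqref{muij}, applied along $Q_1$ and $Q_0$, gives $F_i(\Phi)-F_i(\Phi-\sigma Q)\le-\varpi\sigma q_i$. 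This dominates $\sigma'q_i=-\beta\sigma q_i$ once $\beta<\varpi/2$. The curvature term $-\Phi_i'(\xi'-c+D_i/r)$ is handled by its sign. Where $\Phi$ is near $\mathbf 0$ we have $r\ge\xi$, and the choice of $\xi'$ makes $\xi'-c+D_i/r\le0$. Where $\Phi$ is near $\mathbf 1$ this need not hold, so there we use $|\Phi'|\le ae^{-bL}$ small instead.
\item The compact middle region $|\zeta|\le L$: there $-\Phi_i'\ge\kappa>0$. The term $-\Phi_i'\cdot K\sigma$ coming from $\xi'$ then absorbs the $O(\sigma)$ errors (the $F$-difference is bounded by $\Lambda\sigma q^*$), provided $K$ is large. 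On this region $r\ge\xi-L$, and $\xi$ is large, so $D_i/r\le(\overline D+1)/\xi$ and the curvature sign is still favorable.
\end{itemize}
Inside $\{r\le 1\}$ the radial singularity is harmless, because there $w$ is within $O(\sigma)$ of a constant near $\mathbf 1$ with $\Phi'$ exponentially small. Alternatively one can smooth $r$ near the origin.

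Next we fix the constants.
\begin{itemize}
\item \emph{Initial time.} Take $R_1=\xi_0+L_1$ with $L_1$ chosen so that $\Phi(L_1)\le\sigma_0q_*$ componentwise. Then $w(0,\cdot)=\mathbf 0$ outside $B(0,R_1)$. Inside the ball, $w(0,\cdot)\le(1-\sigma_0q_*)\mathbf 1$, which is at most $(1-\delta q_*)\mathbf 1$ if $\sigma_0\ge\delta$. We take $\sigma_0=\delta$, which is compatible with the constraint $\sigma_0q^*<\varepsilon_0$ as long as $\delta$ is small.
\item \emph{Time $\hat T$.} Choose $\hat T$ so that $\xi(\hat T)\ge\xi_0+c\hat T/2$ is much larger than $R_1$.
\item \emph{Target radius.} Take $R_2=\xi(\hat T)-L_2$ with $L_2$ so large that, for $r\le R_2$, $\Phi(r-\xi(\hat T))\ge(1-\delta q_*/2)\mathbf 1$. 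Since also $\sigma(\hat T)q^*\le\delta q_*/2$ for $\hat T$ large, this gives $w(\hat T,\cdot)\ge(1-\delta q_*)\mathbf 1$ on $\overline{B(0,R_2)}$.
\item \emph{Outer radius.} Take $R_3=\xi(\hat T)+L_3$ with $\Phi(L_3)\le\sigma(\hat T)q_*$. Since $\xi$ is increasing and $\sigma$ is decreasing, this forces $w\equiv\mathbf 0$ near $\partial B(0,R_3)$ for all $t\in[0,\hat T]$.
\end{itemize}
Then $w\le v_{x_0,R_1}^\delta$ on the parabolic boundary of $[0,\hat T]\times B(x_0,R_3)$, because $v\ge\mathbf 0$ there. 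The comparison principle on the ball with these boundary values (the Dirichlet analogue of Lemma \ref{com}) then yields the claim, since $B(x_0,R_3)\subset\Omega$.

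The main obstacle is the bookkeeping in the subsolution inequality, in particular how the constants depend on one another. $L$ must be fixed first, from \eqref{muij} and \eqref{1.a}. Then $K$ is fixed in terms of $\kappa$, $\Lambda$ and $M$. Then $\xi_0$ is fixed so that $\xi'\ge c/2$, which needs $K\sigma_0+(\overline D+1)/\xi_0<c/2$, and this is where the smallness of $\delta$ is used. The delicate point is that the derivative of the interpolation, $\sigma Q'(\zeta)\,\zeta_t$, contains the factor $\xi'$. This term is nonzero only on the compact region where the interpolation happens, so it should be placed inside the middle region and absorbed there by $-\Phi'K\sigma$. That requires the Perron–Frobenius switch in $Q$ to occur within $|\zeta|\le L$, and I would arrange this first.
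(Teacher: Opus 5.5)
The paper gives no proof of this lemma; it quotes \cite[Lemma 2.3]{yss}. There, as in the scalar case of \cite{BHM2009}, such a statement is typically obtained by combining spreading of a whole-space solution started from a large ball (cf. Lemma \ref{VT2}) with a finite-time localization to $B(x_0,R_3)$ (cf. Lemma \ref{VT1}). Your explicit expanding radial subsolution $\Phi(r-\xi(t))-\sigma(t)Q(r-\xi(t))$, compared inside $B(x_0,R_3)$, is a genuinely different route. It is more self-contained and quantitative, but you must now control curvature terms yourself. Most of the architecture is sound:
\begin{itemize}
\item the choice of $R_1<R_2<R_3$ and $\hat T$;
\item the need for a decaying $\sigma$, since the initial and target levels coincide;
\item absorbing all $O(\sigma)$ errors in the middle region by $-\Phi_i'K\sigma$;
\item the favourable sign of the curvature term ahead of the front.
\end{itemize}

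The step that fails as written is the region behind the front. You fix $L$ first, independently of $\delta$. Then the bad part of $-\Phi_i'(\xi'-c+D_i/r)$ is bounded only by a fixed multiple of $ae^{-bL}$. What must absorb it is $(\varpi-\beta)\sigma(t)q_i^1\le\varpi\delta q^*$, so the inequality fails for small $\delta$. Letting $L$ depend on $\delta$ does not help. Then $\kappa=\kappa(L)$ decays exponentially in $L$, so $K\sim 1/\kappa$ grows like a negative power of $\delta$, and the constraint $K\sigma_0<c/4$ that you need for $\xi'\ge c/2$ can break.

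Your claim that the origin is harmless is also false. The map $x\mapsto\Phi_i(|x|-\xi)$ has a concave conical peak at $x=0$, and $-D_i\Phi_i'/r\to+\infty$ there. Touching $w_i$ from above by $w_i(0)-M|x|^2$ shows $w$ is not even a viscosity subsolution at the origin, however small $\Phi_i'(-\xi)$ is.

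Both defects can be repaired inside your scheme.
\begin{itemize}
\item Replace $r$ by a smooth $h(r)$ with $h'=0$ near $0$ and $h(r)=r$ for $r\ge1$, as with $h_\mu$ in Lemma \ref{tran}. The extra terms $D_i\Phi_i''(1-h'^2)$ and $D_i\Phi_i'(h''+h'/r)$ live where $h(r)-\xi\le 1-\xi(t)$.
\item For $r\ge1$, note that $\xi'-c+D_i/r>0$ forces $r<\overline D\xi/(\overline D+1)$, i.e. $r-\xi<-\xi(t)/(\overline D+1)$.
\end{itemize}
By \eqref{1.a}, every bad term is then at most $Cae^{-b\xi(t)/(\overline D+1)}$. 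Using $\xi(t)\ge\xi_0+ct/2$, this is at most $Cae^{-b\xi_0/(\overline D+1)}e^{-bct/(2(\overline D+1))}$. That lies below the available $\frac{\varpi}{2}q_*\delta e^{-\beta t}$ once $\beta\le\min\{\varpi/2,\,bc/(2(\overline D+1))\}$ and $\xi_0$ is large depending on $\delta$.

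The resulting order of constants is $L\to\beta\to K\to\delta\to\xi_0\to\hat T\to R_1,R_2,R_3$. This is legitimate because $K$ does not depend on $\delta$ or on $\beta\in(0,\varpi/2]$.

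A minor point: the residual contains $-\sigma'q_i=\beta\sigma q_i$, not $+\sigma'q_i$. You do treat it correctly as a term to be dominated.
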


In what follows, let $U(t,x)=(U_1(t,x),\dots,U_m(t,x))$ be any homogeneous transition front of \eqref{1.1} connecting $\mathbf0$ and $\mathbf1$ in the sense of Definition \ref{da} with $\Omega = \R^2$. Now we turn to show time-monotonicity and  exponential decay properties of $U$.
For any $R>0$ and $\delta\in\left(0,\frac{1}{q^*}\right)$, let 
$
v_{R,\delta}=\left(v_{R,\delta}^1,\cdots,v_{R,\delta}^m\right)
$
be the solution of the Cauchy problem
\begin{equation*}
\begin{cases}
\begin{aligned}
&(v_{R,\delta})_t(t,x)= D\,\Delta v_{R,\delta}(t,x) + F\big(v_{R,\delta}(t,x)\big), 
&& t>0,\ x\in\mathbb{R}^2, \\
&v_{R,\delta}(0,x) =
\begin{cases}
\mathbf1-\delta Q_1, & |x| < R,\\
\mathbf{0}, & |x| \ge R.
\end{cases}
\end{aligned}
\end{cases}
\end{equation*}

\begin{lem}\label{VT1}
For any $T>0$,  $\delta\in\left(0,\frac{1}{q^*}\right)$ and $\vartheta>0$, there exists $R>0$ such that
$$
v_{2R,\delta}(t,x)\geq(1-\vartheta)\cdot\mathbf1-\delta Q_1
\ \text{ for all }t\in[0,T]\text{ and }|x|\leq R.
$$
\end{lem}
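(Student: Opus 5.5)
The idea is to compare $v_{2R,\delta}$ on the finite time interval $[0,T]$ with a solution of the same system started from the spatially constant datum $\mathbf 1-\delta Q_1$. The only change relative to $v_{2R,\delta}(0,\cdot)$ is that the zero region $|x|\ge 2R$ is replaced by $\mathbf 1-\delta Q_1$. By parabolic continuous dependence, the influence of that far region on the ball $|x|\le R$ is exponentially small in $R$, uniformly for $t\in[0,T]$.

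\textbf{Step 1: the whole-space comparison solution.} Let $w(t)$ solve the ODE $w'=F(w)$ with $w(0)=\mathbf 1-\delta Q_1$. Since $\delta<1/q^*$ and $Q_1\ll\mathbf 1$ componentwise, $w(0)\in(\mathbf 0,\mathbf 1]$. Viewed as a spatially homogeneous function, $w$ solves the Cauchy problem in $\mathbb R^2$ with constant initial datum.

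\textbf{Step 2: $w$ stays above $\mathbf 1-\delta Q_1$.} We check that $\mathbf 1-\delta Q_1$ is an ODE subsolution, i.e.\ $F(\mathbf 1-\delta Q_1)\ge\mathbf 0$. Write
\[
F(\mathbf 1-\delta Q_1)=-\delta\int_0^1 F'(\mathbf 1-\theta\delta Q_1)\,Q_1\,d\theta .
\]
If $\delta q^*$ is small enough that $\mathbf 1-\theta\delta Q_1\in B_m(\mathbf 1,4\varepsilon_0)$, then \eqref{muij} together with the cooperativity (A4) gives $F'(\mathbf 1-\theta\delta Q_1)Q_1\le A_1Q_1\le-\varpi Q_1$. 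Hence $F(\mathbf 1-\delta Q_1)\ge\delta\varpi Q_1\gg\mathbf 0$, and the comparison principle yields $w(t)\ge\mathbf 1-\delta Q_1$ for all $t\ge0$.

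For general $\delta\in(0,1/q^*)$ this sign information may fail. In that case I would drop Step 2 and instead compare directly with $w$ on $[0,T]$. The natural statement is then $v_{2R,\delta}\ge w(t)-\vartheta\mathbf 1$. To recover the stated form I expect the intended use to be for small $\delta$, and I would either restrict to that case or note that the lemma is applied only there. \emph{This is the main obstacle:} ensuring that the constant comparison state does not drop below $\mathbf 1-\delta Q_1$. The cleanest route is the Perron--Frobenius estimate above, valid for small $\delta$.

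\textbf{Step 3: localization estimate.} Let $z=w-v_{2R,\delta}$. Then $z\ge\mathbf 0$ by comparison, since both solutions take values in $[\mathbf 0,\mathbf 1]$ and $z(0)\ge\mathbf 0$. Moreover, $z$ satisfies the linear cooperative system
\[
z_t=D\Delta z+B(t,x)z,\qquad B=\int_0^1F'\bigl(v_{2R,\delta}+\theta z\bigr)\,d\theta .
\]
The off-diagonal entries of $B$ are nonnegative and every row of $B$ has absolute sum at most $\Lambda$, by \eqref{LA}. Consequently each component satisfies $\partial_t z_i\le D_i\Delta z_i+\Lambda\max_j z_j$.

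Compare $z$ with $e^{\Lambda t}\,\mathbf 1\,h(t,x)$, where $h$ solves the heat equation with diffusivity $\overline D$ and initial datum $\mathbf 1_{\{|x|\ge2R\}}$. Because the diffusivities $D_i$ differ across components, the cleaner choice of supersolution is exponential:
\[
\bar z(t,x)=C\,e^{Kt}\cosh\!\bigl(|x|/\rho\bigr)\,e^{-2R/\rho},
\]
with $\rho$ and $K$ chosen according to $\overline D,\Lambda$ (smoothed at the origin if needed). Its initial value is at least $1\ge z(0)$ on $|x|\ge2R$, and it is positive elsewhere. On $|x|\le R$ and $t\le T$ it is bounded by $C e^{KT}e^{-R/\rho}$. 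Choosing $R$ large makes this less than $\vartheta$.

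\textbf{Step 4: conclusion.} For $|x|\le R$ and $t\in[0,T]$,
\[
v_{2R,\delta}=w-z\ge\mathbf 1-\delta Q_1-\vartheta\mathbf 1,
\]
which is the claim.
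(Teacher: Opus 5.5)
Your argument follows the paper's proof: compare $v_{2R,\delta}$ with the spatially constant ODE solution started at $\mathbf 1-\delta Q_1$, use \eqref{muij} to get $F(\mathbf 1-\delta Q_1)\ge \delta\varpi Q_1$ so that this solution stays above $\mathbf 1-\delta Q_1$, and then show the nonnegative difference is below $\vartheta\mathbf 1$ on $|x|\le R$, $t\in[0,T]$. The only change is in that last estimate, where you use the explicit barrier $Ce^{Kt}\cosh(|x|/\rho)e^{-2R/\rho}$ instead of the Gaussian kernel bound the paper quotes from \cite{SW18}; your barrier does work with $C=2$ and $K=\Lambda+2\overline D/\rho^{2}$, because $\Delta\cosh(|x|/\rho)\le \frac{2}{\rho^{2}}\cosh(|x|/\rho)$ in $\mathbb R^2$ and the function is smooth at the origin. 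Your caveat on $\delta$ is well founded and applies equally to the paper, whose use of \eqref{muij} also tacitly requires $\mathbf 1-\theta\delta Q_1\in B_m(\mathbf 1,4\varepsilon_0)$ for $\theta\in[0,1]$, i.e.\ $\delta$ small.
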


\begin{proof}
Let $T>0$ and $\delta\in\left(0,\frac{1}{q^*}\right)$ be any fixed constants.
Let $v_\delta=(v_\delta^1,\cdots,v_\delta^m)$ be the solution of the ordinary differential system
\begin{align}\label{VODE}
\begin{cases}
v_\delta'(t)=F(v_\delta(t)), & t>0,\\[4pt]
v_\delta(0)=\mathbf1-\delta Q_1.
\end{cases}
\end{align}
By \eqref{muij} and $F(\mathbf1)=\mathbf0$, for each $i=1,\cdots,m$,
\begin{align*}
F_i(\mathbf 1-\delta Q_1)=F_i(\mathbf 1-\delta Q_1)-F_i(\mathbf1)
\geq-\sum_{j=1}^m\mu_{ij}^1\delta q_j^1
\geq\delta\varpi q_i\geq0,
\end{align*}
which implies that $\mathbf1-\delta Q_1$ is a subsolution of \eqref{VODE} in $(0,\infty)$,
hence 
$$
v_\delta(t)\geq\mathbf1-\delta Q_1
\ \ \text{ for all }t\geq0.
$$
Define $w=(w_1,\cdots,w_m)$ by $w(t,x)=v_\delta(t)-v_{2R,\delta}(t,x)$.
Then
$$
w(0,x)=
\begin{cases}
\mathbf0,&|x|<2R,\\
\mathbf1-\delta Q_1,&|x|\geq 2R.
\end{cases}
$$
From the proof of formula (3.4) in \cite{SW18}, there exists two constants $B_1$ and $B_2$ such that for each $i=1,\cdots,m$,
\begin{align*}
w_i(t,x)&\leq\frac{B_1}{\sqrt{t}}\int_{\R^2}\exp\left(-B_2\frac{|x-y|^2}{t}\right)w_i(0,y)dy\\
&\leq\frac{B_1}{\sqrt{t}}\int_{|y|\geq2 R}\exp\left(-B_2\frac{|x-y|^2}{t}\right)dy,
\ \ t\in(0,T],\ x\in\R^2.
\end{align*}
Then
\begin{align*}
w_i(t,x)
&\leq{B_1}\int_{|z|\geq \frac{R}{\sqrt t}}\exp\left(-B_2{|z|^2}\right)dz\\
&\leq{B_1}\int_{|z|\geq \frac{R}{\sqrt T}}\exp\left(-B_2{|z|^2}\right)dz,
\  t\in(0,T], \ |x|\leq R. 
\end{align*}
Therefore, for any $\vartheta>0$, there exists $R=R(T,\vartheta)>0$ sufficiently large such that
$w(t,x)\leq\vartheta \cdot\mathbf1$ for all
$ t\in(0,T]$ and $|x|\leq R$.
Together with the definition of $w$, one deduces that the statement of Lemma \ref{VT1} holds.
\end{proof}

The following lemma concerns the large-time behavior of $v_{R,\delta}$. 
Its proof follows similar arguments as in \cite[Lemma 3.2]{SW18} and is therefore omitted.

\begin{lem}\label{VT2}
For any $\delta \in \left(0, \frac{1}{q^*}\right)$, there exist constants $R>0$ and $T>0$ such that
\[
v_{R,\delta}(t,x) \ge \mathbf1 - \delta Q_1
\quad \text{for all } t \ge T \text{ and } |x| \le R.
\]
\end{lem}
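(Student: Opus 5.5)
The plan is to combine two ingredients: the ODE-driven dynamics of the spatially homogeneous solution started from $\mathbf1-\delta Q_1$, and a radial subsolution that freezes a large ball at the value $\mathbf1-\delta Q_1$. The argument then has three steps.

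\emph{Step 1: the ODE solution converges to $\mathbf1$.} Let $v_\delta$ solve \eqref{VODE}. As in the proof of Lemma \ref{VT1}, $\mathbf1-\delta Q_1$ is a subsolution of \eqref{VODE}, and $\mathbf1$ is a supersolution. Hence $v_\delta(t)$ is nondecreasing in $t$ and stays in $[\mathbf1-\delta Q_1,\mathbf1]$, so it converges to an equilibrium. By \eqref{muij}, for $\delta$ small there is no equilibrium in $[\mathbf1-\delta Q_1,\mathbf1)$ other than $\mathbf1$: the vector field $F$ points toward $\mathbf1$ along the Perron--Frobenius direction $Q_1$. Therefore $v_\delta(t)\to\mathbf1$. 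For the PDE, however, we need more than convergence of the ODE; we need the inequality $v_{R,\delta}\ge \mathbf1-\delta Q_1$ on a fixed ball for all large times.

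\emph{Step 2: a stationary compactly supported subsolution.} I would build a time-independent radial subsolution $\underline w$ of the form
\[
\underline w(x)=\mathbf1-\delta Q_1 \quad (|x|\le R_0),
\]
with $\underline w$ decaying to $\mathbf0$ for $|x|$ between $R_0$ and some $R_0+L$, and $\underline w\equiv\mathbf0$ beyond. This is the system version of the classical bistable bubble. The inner plateau is a subsolution because $F(\mathbf1-\delta Q_1)\ge\mathbf0$ componentwise. The transition region would be built from a one-dimensional profile, for instance the traveling wave $\Phi$ evaluated at $|x|-R_0-L'$, perturbed by $\varepsilon Q(\cdot)$-type corrections as in the sub/supersolutions of \cite{SW18}. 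The sign $c>0$ of the wave speed, which corresponds to $\mathbf1$ invading, absorbs the curvature term $D(1/r)\partial_r$ once $R_0$ is large. Near the zero level the subsolution is taken as a maximum with $\mathbf0$, which is allowed by Definition \ref{dss}. If this construction proves awkward, I would instead simply cite \cite[Lemma 3.2]{SW18}, whose argument is exactly of this type, adapted to the initial value $\mathbf1-\delta Q_1$.

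\emph{Step 3: comparison.} Choose $R=R_0+L$ (enlarged if needed) so that $v_{R,\delta}(0,\cdot)\ge\underline w$. Then Lemma \ref{com}, applied in $\R^2$, gives $v_{R,\delta}(t,\cdot)\ge\underline w$ for all $t\ge0$. Hence
\[
v_{R,\delta}(t,x)\ge\mathbf1-\delta Q_1 \qquad \text{for } |x|\le R_0 \text{ and all } t\ge0.
\]
This is in fact stronger than required, since it holds for every $t\ge0$ on the ball $|x|\le R_0$. The stated lemma then follows, with its radius taken to be $R_0$ (at most the initial radius).

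Should the stationary subsolution fail to be exact, I would fall back on a time-dependent argument. First, Lemma \ref{VT1}, applied on a long time interval, shows that the solution tracks the ODE solution $v_\delta(t)$ on a big ball. Then I would use a shrinking-front subsolution whose retreat is overcome by the positive speed.

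The main obstacle is Step 2: producing a compactly supported vector subsolution that stays exactly at $\mathbf1-\delta Q_1$ on a core. For a system $F$ has no definite sign near $\mathbf0$, so the decay to $\mathbf0$ must be handled along $P_0$ and $Q_0$ using \eqref{muij}.
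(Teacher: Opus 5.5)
\textbf{There is a genuine gap, in two places.}

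\emph{Step 3 does not prove the lemma as stated, even if Step 2 is granted.} In the statement, the same $R$ is both the radius of the initial plateau and the radius of the ball $\{|x|\le R\}$ on which the conclusion must hold. Moreover $v_{R,\delta}(0,x)=\mathbf0$ for $|x|=R$. Any subsolution lying below $v_{R,\delta}(0,\cdot)$ must therefore vanish near $\{|x|=R\}$. So comparison with your stationary $\underline w$, or with its translates that fit inside $\{|x|<R\}$, only gives $v_{R,\delta}\ge\mathbf1-\delta Q_1$ on a strictly smaller ball. Your last sentence silently replaces the lemma by this weaker statement; it is not ``stronger than required.''

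What is missing is an invasion step: you must show $v_{R,\delta}(t,\cdot)\to\mathbf1$ uniformly on $\{|x|\le R\}$ as $t\to\infty$. Then $\mathbf1-\delta Q_1\ll\mathbf1$ gives $T$. Starting from a compactly supported stationary subsolution $\underline w\le v_{R,\delta}(0,\cdot)$, the classical argument runs as follows.
\begin{itemize}
\item The solution $W$ with $W(0,\cdot)=\underline w$ is nondecreasing in $t$ and converges to a stationary solution $p\ge\underline w$.
\item Slide $\underline w(\cdot-y)$ over all $y\in\mathbb R^2$. Openness of the admissible set of $y$ comes from the componentwise strong maximum principle; touching is excluded because each $\underline w_i$ has a corner where it is cut off by $0$. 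This yields $p\ge(\sup\underline w_1,\dots,\sup\underline w_m)$, a constant vector close to $\mathbf1$ along $Q_1$.
\item Comparison with the ODE solution of your Step 1 (which your proposal never actually uses) then forces $p\equiv\mathbf1$. Since $v_{R,\delta}\ge W$, this concludes.
\end{itemize}
The other standard route is an expanding-ball subsolution built from $\Phi(|x|-\rho-c't)$, $0<c'<c$, with time-decaying $Q$-corrections.

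\emph{Step 2 is not actually constructed, and the construction you describe does not close.} Where $Q=Q_1$, the profile $\Phi(|x|-R_0-L')-\varepsilon Q$ has radial derivative $\Phi'\ll\mathbf0$. So it cannot be attached to a constant plateau with zero slope. Attaching it with a corner produces a concave kink across $\{|x|=R_0\}$, which a subsolution cannot have: only $\max$-type kinks are admissible.

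This is repairable. For example, take
$\max\{\mathbf1-\tau(|x|)Q_1,\ \Phi(|x|-\rho)-\varepsilon Q(|x|-\rho),\ \mathbf0\}$, with:
\begin{itemize}
\item $\tau\equiv\delta$ on $[0,R_0]$ and then increasing slowly, so that $\overline D(|\tau''|+|\tau'|/|x|)\le\delta\varpi$ while \eqref{muij} gives $F(\mathbf1-\tau Q_1)\ge\tau\varpi Q_1$;
\item $\delta<\varepsilon$ with $\varepsilon$ small;
\item $R_0\ge 2\overline D/c$, so that $-c\Phi'$ absorbs the curvature and the $\varepsilon Q$-terms wherever the $\Phi$-piece is active;
\item $\rho$ large.
\end{itemize}
However, once the invasion step above is included, the exact plateau at $\mathbf1-\delta Q_1$, which you identified as the main obstacle, becomes unnecessary. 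Any compactly supported stationary subsolution below $v_{R,\delta}(0,\cdot)$ whose maximum lies in the basin of $\mathbf1$ suffices.

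The paper gives no argument of its own here and simply defers to \cite[Lemma 3.2]{SW18}. Your fallback coincides with that, but the self-contained route you sketch does not, as written, establish the statement.
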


Combining Lemma \ref{VT1} and \ref{VT2}, by arguments similar to those in \cite[Proposition 1]{GH}, one can derive that $U(t,x)$ represents an invasion of the state $\mathbf0$ by $\mathbf1$ in the sense of the definition proposed in \cite{BH2}. 
The following lemma is therefore natural.

\begin{lem}\label{invasion}
The following statements are true.
\begin{itemize}
\item[(a)] $U_t(t,x)>0$ for all $(t,x)\in\R\times\R^2$;

\item[(b)] $\lim\limits_{t\to-\infty}U(t,x)\to\mathbf0$ and $\lim\limits_{t\to\infty}U(t,x)\to\mathbf1$ locally uniformly in $x\in\R^2$.
\end{itemize}
\end{lem}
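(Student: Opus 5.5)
Following \cite[Proposition 1]{GH} and \cite{BH2}, I first show that $U$ is an invasion of $\mathbf0$ by $\mathbf1$: for every $t$ we have $\Omega_t^+\subset\Omega_{t'}^+$ whenever $t'\ge t+\tau$ for some $\tau>0$, and $d(\Gamma_t,\Gamma_{t'})\to\infty$ as $t'-t\to\infty$. Part (b) then follows quickly, and part (a) is proved afterward by a sliding argument in time. I use the convention of Definition \ref{da}: $U\approx\mathbf1$ deep in $\Omega_t^+$ and $U\approx\mathbf0$ deep in $\Omega_t^-$.

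\emph{Step 1 (the $\mathbf1$-region cannot shrink and must grow).} Fix $\delta\in(0,1/q^*)$ and let $R,T$ be given by Lemma \ref{VT2}. Using Definition \ref{da} with $\varepsilon$ small, there is $S$ such that whenever $x_0\in\Omega_t^+$ with $d(x_0,\Gamma_t)\ge S+2R$, we have $U(t,\cdot)\ge \mathbf1-\delta Q_1\ge v_{2R,\delta}(0,\cdot-x_0)$ on $\R^2$. By the comparison principle, $U(t+\sigma,x)\ge v_{2R,\delta}(\sigma,x-x_0)$ for all $\sigma\ge0$. Lemma \ref{VT1}, applied on $[0,T]$, and Lemma \ref{VT2} for $\sigma\ge T$ (here I use that $v_{2R,\delta}\ge v_{R,\delta}$ by monotonicity of the initial data) give $U(t+\sigma,x)\ge(1-\vartheta)\mathbf1-\delta Q_1$ for all $\sigma\ge0$ and $|x-x_0|\le R$. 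Hence points deep in $\Omega_t^+$ remain outside the deep part of $\Omega_{t+\sigma}^-$ for every $\sigma\ge0$. This is exactly the hypothesis of the Berestycki--Hamel criterion, which yields that $\Gamma_t$ moves with positive distance toward $\Omega^-$. In particular, $U$ is an invasion, and the conditions \eqref{subset2} guarantee that arbitrarily large balls lie deep inside $\Omega^\pm_t$.

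\emph{Step 2 (limits, part (b)).} Since the invasion moves the interface away from any fixed compact set, for fixed $x$ we have $d(x,\Gamma_t)\to\infty$ with $x\in\Omega_t^+$ as $t\to+\infty$ and $x\in\Omega_t^-$ as $t\to-\infty$. Definition \ref{da} then gives the limits locally uniformly. The limit as $t\to\infty$ also uses the sharper bound from Lemma \ref{VT2}, $U\ge\mathbf1-\delta Q_1$, with $\delta$ arbitrary.

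\emph{Step 3 (monotonicity, part (a)).} This is the main obstacle, because the system has no scalar maximum principle and the front $U$ is not a priori ordered. I would adapt the sliding method of Berestycki--Hamel to cooperative systems using the Perron--Frobenius vectors. Define $U^\tau(t,x)=U(t+\tau,x)$. By invasion and Definition \ref{da}, for $\tau$ large we have $U^\tau\ge U$ both where $U\le\varepsilon_0$ (i.e.\ deep in $\Omega^-$) and where $U^\tau\ge\mathbf1-\varepsilon_0$, while the intermediate region is bounded in distance from $\Gamma_t$.

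To close the argument I construct perturbed comparison functions $U^\tau(t,x)+\epsilon P$ and $U^\tau-\epsilon Q$, in the spirit of \eqref{dpq}, whose supersolution property near $\mathbf0$ and $\mathbf1$ follows from \eqref{muij}. A maximum principle on the tails then gives $U^{\tau}\ge U$ for all large $\tau$. Set $\tau_*=\inf\{\tau: U^{\tau'}\ge U \ \forall\tau'\ge\tau\}$. If $\tau_*>0$, the strong maximum principle for cooperative systems, applied in the bounded-width middle region together with a compactness argument, contradicts minimality. This gives $U^\tau\ge U$ for all $\tau>0$, i.e.\ $U_t\ge0$. Finally, strict positivity $U_t\gg0$ follows from the strong maximum principle applied to the linearized cooperative system satisfied by $U_t$, using irreducibility near the equilibria, since $U_t\not\equiv0$ by part (b).
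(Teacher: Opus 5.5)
Your architecture matches what the paper intends. The paper gives no detailed proof: it only says that Lemmas~\ref{VT1} and~\ref{VT2}, used as in \cite[Proposition 1]{GH}, make $U$ an invasion of $\mathbf0$ by $\mathbf1$ in the sense of \cite{BH2}, and treats (a) and (b) as consequences. However, your Step~1 proves only half of the invasion property.

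Once a common radius is chosen for Lemmas~\ref{VT1} and~\ref{VT2}, what you actually obtain is non-recession up to a bounded distance. Namely, if $x_0\in\Omega_t^+$ and $d(x_0,\Gamma_t)\ge S+2R$, then $U(t+\sigma,\cdot)\ge(1-\vartheta)\mathbf1-\delta Q_1$ on $B(x_0,R)$ for every $\sigma\ge0$. This is useful: it is exactly what controls the tails of the sliding argument for small shifts. But it does not give $d(\Gamma_t,\Gamma_{t'})\to\infty$. As a property of the sets, it holds trivially for a family with $\Gamma_t$ independent of $t$, so no criterion can extract positive advance from it alone. Moreover, Lemmas~\ref{VT1}--\ref{VT2} only say that large near-$\mathbf1$ balls persist, not that they expand.

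The advance has to come from a quantitative use of $c>0$. One option is the global mean speed theorem \cite[Theorem 1.6]{SW18}, which the paper itself invokes in Step~3 of Lemma~\ref{tran}: combined with your non-recession and \eqref{subset2}, it places every point within a fixed distance of $\Gamma_t$ deep inside $\Omega_{t+\tau}^+$ for $\tau$ large, uniformly in $t$. Another option is to build subsolutions that expand at a speed close to $c$. Without this ingredient, Step~2 fails, since it needs $d(x,\Gamma_t)\to\infty$ for fixed $x$. The start of Step~3 fails too: it needs that, for $\tau$ large and uniformly in $t$, every point lies either where $U(t,\cdot)\le 4\varepsilon_0\mathbf1$ or where $U(t+\tau,\cdot)\ge(1-4\varepsilon_0)\mathbf1$.

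Step~3 also passes over the genuinely vectorial difficulty. The touching argument must perturb whichever function is known to be close to an equilibrium in \emph{all} components. Where only $U(t,\cdot)\le 4\varepsilon_0\mathbf1$ is known, the components of $U^\tau$ other than the touching one are uncontrolled, so \eqref{muij} cannot be applied to $F(U^\tau)-F(U^\tau+\epsilon P)$. There you must instead use that $U-\epsilon Q_0$ is a subsolution, exactly as in Step~3 of the proof of Lemma~\ref{tran}. Where $U^\tau\ge(1-4\varepsilon_0)\mathbf1$, you use $U^\tau+\epsilon Q_1$ instead. In general no single positive vector is good at both $\mathbf0$ and $\mathbf1$. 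Also, a general front provides no smooth transversal variable at which to evaluate $P(\varsigma)$ and $Q(\varsigma)$ from \eqref{dpq}. So you have to switch from $Q_0$ to $Q_1$ inside the overlap band where both smallness conditions hold; there $U^\tau-U$ has a positive lower bound, so no contact can occur.

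Finally, irreducibility of $F'$ is not among (A1)--(A4), but you do not need it for $U_t\gg\mathbf0$. Since $\partial F_i/\partial u_j\ge0$ for $i\ne j$ and $U_t\ge\mathbf0$, each component $(U_i)_t$ satisfies a scalar linear inequality to which the strong maximum principle applies. And $(U_i)_t\not\equiv0$ on $(-\infty,t_0]\times\mathbb R^2$, because $U_i\to0$ as $t\to-\infty$ by (b) while $U_i(t_0,\cdot)$ is close to $1$ deep in $\Omega_{t_0}^+$.
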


The following lemma focuses on the exponential decay properties of homogeneous transition fronts of system \eqref{1.1} connecting $\mathbf0$ and $\mathbf1$.
	
\begin{lem}\label{tran}
For any point $x_0\in\mathbb R^2$ and any constant $R>0$, there exist  $T=T(R)<0$, $\alpha=\alpha(T)>0$, $\beta=\beta(T)>0$ and $\eta>0$ such that, for each $i=1,\dots,m$,
\begin{equation*}
  U_i(t,x)\le \alpha e^{\eta t}\quad \text{ and }\quad  |\nabla U_i(t,x)| \le \beta e^{\eta t}
\end{equation*}
for all $ t\le T$ and $|x-x_0|<R$.
\end{lem}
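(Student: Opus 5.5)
We show that for $t$ very negative $U$ is uniformly small on $B(x_0,R)$, and then use the linearization at $\mathbf 0$, which is exponentially contracting along the Perron--Frobenius direction $Q_0$ by (A3) and \eqref{muij}.

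\emph{Step 1: smallness for early times.} By Lemma \ref{invasion}(b), $U(t,\cdot)\to\mathbf0$ locally uniformly as $t\to-\infty$, but we need smallness on a region that grows backward in time. Since $U$ is a transition front, for $\varepsilon=\varepsilon_0 q_*$ there is $S$ with $U\le\varepsilon\mathbf1$ on $\{x\in\Omega_t^-: d(x,\Gamma_t)\ge S\}$. The monotonicity $U_t>0$ and the convergence $U\to\mathbf0$ locally uniformly should force $\Gamma_t$ to move away from every compact set as $t\to-\infty$. Otherwise some point of $\Gamma_{t_n}$ with $t_n\to-\infty$ would stay at bounded distance from $x_0$. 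Then \eqref{subset2} gives a point $y_n\in\Omega^+_{t_n}$, at bounded distance from $x_0$ and at distance at least $S$ from $\Gamma_{t_n}$, with $U(t_n,y_n)\ge(1-\varepsilon)\mathbf1$. This contradicts local uniform convergence to $\mathbf0$. We also need the front to move away at least linearly, or at least to clear balls of radius $R+\kappa|t|$. I would deduce this from a finite propagation speed bound. Comparing with the solutions $v_{R,\delta}$ of Lemmas \ref{VT1}--\ref{VT2} shows that the region where $U\ge(1-\varepsilon)\mathbf1$ cannot retreat faster than a fixed speed backward in time. Consequently there are $T_0<0$ and $\kappa>0$ such that $U(t,x)\le\varepsilon_0q_*\mathbf1$ for $t\le T_0$ and $|x-x_0|\le R+\kappa(T_0-t)$.

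\emph{Step 2: supersolution with exponential decay in $t$.} On the cone $\mathcal C=\{(t,x): t\le T_0,\ |x-x_0|\le R+\kappa(T_0-t)\}$, define
\[
\bar w(t,x)=\varepsilon_0 Q_0\big(e^{\eta(t-T_0)}+h(t,x)\big),
\]
where $h$ is a boundary corrector equal to $1$ on the lateral boundary of the cone and exponentially small inside. Take for instance $h=\sum_{k=1}^2\cosh(\sigma(x_k-x_{0,k}))/\cosh(\sigma(R+\kappa(T_0-t)))$ with $\sigma$ small, or, more simply, a sum of exponentials $e^{\sigma((x-x_0)\cdot e_k-(R+\kappa(T_0-t)))}$ over finitely many directions $e_k$. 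Since $\bar w$ lies in $B_m(\mathbf0,4\varepsilon_0)$, \eqref{muij} gives $F(\bar w)\le A_0\bar w\le-\varpi\bar w$ componentwise. It then suffices that $\eta\le\varpi/2$ and that $\sigma$ be chosen with $\overline D\sigma^2+\kappa\sigma\le\varpi/2$. With these choices $\bar w$ is a supersolution in $\mathcal C$ and dominates $U$ on its parabolic boundary. Comparing on $[t_1,t]$ and letting $t_1\to-\infty$ yields $U_i\le C e^{\eta t}$ on $|x-x_0|<R$ for $t\le T$, for some $T\le T_0$. The exponentially small tails $e^{-\sigma\kappa|t|}$ of $h$ are absorbed by reducing $\eta$ to $\min(\eta,\sigma\kappa)$.

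\emph{Step 3: gradient bound.} The exponential bound for $U$ is known on a slightly larger ball $B(x_0,R+1)$ over $[t-1,t]$. Standard parabolic interior $L^p$ and Schauder estimates for the linear system $U_t-D\Delta U=G(t,x)U$, where $G=\int_0^1F'(\theta U)\,d\theta$ is bounded, then give $|\nabla U_i(t,x)|\le C\sup_{[t-1,t]\times B(x_0,R+1)}|U|\le\beta e^{\eta t}$.

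The main obstacle is Step 1: obtaining smallness of $U$ on a region whose radius grows linearly backward in time, using only the abstract transition-front geometry. If the finite-speed argument proves delicate, the fallback is to use the specific V-shaped front, for which \eqref{1.b} and \eqref{1.a} give this directly, since the point $x$ satisfies $z=x_2-st\to+\infty$ linearly as $t\to-\infty$.
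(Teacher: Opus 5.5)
Your Step 1 has a genuine gap, and it is the step that carries the content of the lemma. To get $U\le\varepsilon_0q_*\mathbf1$ on $|x-x_0|\le R+\kappa(T_0-t)$, the region where $U$ is not small must recede from $x_0$ \emph{at least} linearly as $t\to-\infty$. That requires a positive \emph{lower} bound on the speed at which the front advances.

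A finite-propagation-speed bound (``the region where $U\ge(1-\varepsilon)\mathbf1$ cannot retreat faster than a fixed speed backward in time'') is the reverse inequality. It only gives $d(x_0,\Gamma_t)\le d(x_0,\Gamma_{T_0})+C(T_0-t)$, which is of no use here. Lemmas~\ref{VT1}--\ref{VT2} cannot supply the missing bound either. As stated, they only say that a large ball on which $U$ is close to $\mathbf1$ stays close to $\mathbf1$ (for a bounded time, respectively forever after a delay); they do not say that the ball expands. Your compactness argument that $d(x_0,\Gamma_t)\to\infty$ gives no rate at all. If this distance grew only sublinearly, your Step 2 would give decay like $e^{-\sigma d(x_0,\Gamma_t)}$, which is not of the form $\alpha e^{\eta t}$.

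The paper fills this step with an external result. By \cite[Theorem~1.6]{SW18}, every transition front connecting $\mathbf0$ and $\mathbf1$ has global mean speed $c>0$. Combined with the fact that $U$ is an invasion of $\mathbf0$ by $\mathbf1$ (Lemma~\ref{invasion} and the remark preceding it), this gives $d(\Gamma_t,\Gamma_T)\ge(c-\epsilon)(T-t)$ for $T-t$ large. Hence $U$ is small on $|x|\le R+\frac{\sqrt3}{2}c(T-t)$. An alternative would be a spreading lemma with positive speed for large ``bubbles'' (which uses $c>0$), plus a contradiction at time $T_0$: if the front came within $R+\kappa(T_0-t)$ of $x_0$ at time $t$, the bubble it contains would have covered $B(x_0,R)$ by time $T_0$. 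Your fallback via \eqref{1.b} does work for the V-shaped front, which is the only case used later in the paper. It does not prove the lemma for an arbitrary homogeneous transition front $U$.

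Granting the cone, your Step 2 is a valid and somewhat simpler alternative to the paper's supersolution $\min\{\Phi(\xi)+\delta Q_0,\mathbf1\}$ built with the radial profile $h_\mu$. The paper extracts the rate from the tail \eqref{1.a} of $\Phi$; you extract it from linear contraction along $Q_0$ and the decay of the corrector. Two details need repair:
\begin{itemize}
\item A sum of $\cosh$'s, or of exponentials over finitely many directions, is $\ge1$ only on the boundary of a square or polygon, not on the round lateral boundary $|x-x_0|=R+\kappa(T_0-t)$. In the diagonal direction your $\cosh$ sum is about $2e^{-\sigma r(1-1/\sqrt2)}$. So you should work on a polygonal cone.
\item $\bar w(t_1,\cdot)$ is small inside the cone, so it need not dominate $U(t_1,\cdot)$. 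To start the comparison at $t_1$, add a term such as $\varepsilon_0Q_0e^{-\varpi(t-t_1)}$, with amplitudes scaled so that everything stays in $B_m(\mathbf0,4\varepsilon_0)$; this term vanishes as $t_1\to-\infty$.
\end{itemize}
Step 3 is the same as in the paper.
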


\begin{proof}
Assume without loss of generality that $x_0 = (0,0)$. Otherwise, consider the shifted function 
$\widetilde{U}(t,x) = U(t, x - x_0)$. 
To complete the proof, it is necessary to construct a supersolution of the equation satisfied by $U(t,x)$. 
The proof is divided into three steps.

\textit{Step 1: choice of notations and parameters.} 
Choose a constant $\mu$ satisfying
\begin{equation}\label{l1.1}
    0 < \mu \le \frac{c}{3\overline D}.
\end{equation}
Let $h_\mu : [0, \infty) \to \mathbb R$ be a $C^2$ function satisfying
\begin{equation}\label{l1.2}
\begin{cases}
0 \le h_\mu' \le \frac{\sqrt{3}}{3} & \text{on } [0, \infty),\\
h_\mu' = 0 & \text{in a neighborhood of } 0,\\ 
h_\mu(0) > 0 \ \text{ and } \ h_\mu(r) = \frac{\sqrt{3}}{3} r
 &\text{for } r \ge H \text{ with some } H>0,\\
\frac{ h_\mu'(r)}{r} + h_\mu''(r) \le \frac{\mu}{2} & \text{on } [0, \infty).
\end{cases}
\end{equation}
In particular, note that
\begin{equation}\label{h}
\frac{\sqrt{3}}{3} r \le h_\mu(r) \le \frac{\sqrt{3}}{3} r + h_\mu(0) \quad \text{for all } r \ge 0.
\end{equation}
Let $R>0$ be any fixed constant. Define the set
\begin{equation*}
E = \Big\{(t,x) \in \mathbb R \times \mathbb R^2 : t \le 0, \ |x| \le -\Big(\frac{\sqrt 3}{3} c + \frac{\sqrt 3}{2} \mu \overline D\Big) t + R \Big\}.
\end{equation*}
Let $\delta \in (0, \frac{2\varepsilon_0}{q^*})$ be any sufficiently small constant. 
Since $\Phi(\infty) = \mathbf 0$ and $\Phi' \ll \mathbf 0$, there exists $C > \mathcal C$ such that
\begin{equation}\label{l2.a1}
    \Phi(\xi) \le \min\left\{\delta q_* ,\frac{3\delta\mu q_*}{4m\Lambda}\right\} \cdot\mathbf1 \quad \text{for } \xi \ge C,
\end{equation}
where $\mathcal C$ is defined in \eqref{impo1}.

\textit{Step 2: Construction of a supersolution.} 
For all $t \in \mathbb R$ and $x \in \mathbb R^2$, define  
\begin{equation*}
\overline U(t,x) = \min\left\{\Phi(\xi(t,x)) + \delta Q_0, \mathbf 1\right\},
\end{equation*}
where
\begin{equation*}
\xi(t,x) = -h_\mu(|x|) - \Big(\frac{c}{3} - \frac{\mu \overline D}{2}\Big) t + C + h_\mu(0) + \frac{\sqrt{3}}{3} R.
\end{equation*}
We shall prove that $\bar U(t,x)$ is a supersolution of the equation satisfied by $U(t,x)$ in $E$.
Let $i \in \{1, \dots, m\}$ be fixed. It suffices to prove that
\begin{equation*}
\mathcal L_i(\overline U)(t,x) := \overline U_{t}(t,x) - D_i \Delta \overline U(t,x) - F_i(\overline U(t,x)) \ge 0
\end{equation*}
for all $(t,x) \in E$ such that $\bar U_i(t,x)<1$.
By direct calculation, one has
\begin{align*}
\mathcal L_i(\overline U)(t,x)
    =&\ -\Big(\frac{c}{3} + \frac{\mu \overline D}{2}\Big) \Phi_i'(\xi(t,x)) 
       - D_i (h_\mu'(|x|))^2 \Phi_i''(\xi(t,x)) \\
    &\ + D_i \Big( \frac{1}{|x|} h_\mu'(|x|) + h_\mu''(|x|) \Big) \Phi_i'(\xi(t,x)) 
       - F_i(\overline U(t,x)).
\end{align*}

By \eqref{l1.2}, one has $(h_\mu'(|x|))^2 \le 1/3$. 
From \eqref{h}, it follows that
$
\xi(t,x) \ge C 
$
for all $(t,x) \in E$.
Since $\Phi_i' < 0$, by \eqref{impo1} and \eqref{l1.2}, we obtain
\begin{align*}
\mathcal L_i(\overline U)(t,x)
    &\ge -\Big(\frac{c}{3} + \frac{\mu \overline D}{2}\Big)\Phi_i'(\xi(t,x)) 
       - \frac{D_i}{3} \Phi_i''(\xi(t,x)) 
       + \frac{\mu D_i}{2} \Phi_i'(\xi(t,x)) 
       - F_i(\overline U(t,x)) \\
    &\ge \frac{1}{3} F_i(\Phi(\xi(t,x))) - F_i(\overline U(t,x)).
\end{align*}
Using \eqref{pq} and \eqref{l2.a1}, it follows that
\[
\mathbf 0 \le \overline U(t,x) \le \Phi(\xi(t,x)) + \delta q^* \mathbf 1 
\le 2\delta q^* \mathbf 1 \le 4\varepsilon_0 \mathbf 1 \quad \text{for all } (t,x) \in E.
\]
Since $F_i(\mathbf 0) = 0$, by \eqref{dpq}, \eqref{pq}, \eqref{muij}, \eqref{LA} and \eqref{l2.a1}, 
one infers from the mean value theorem that there exist  $\theta_1, \theta_2, \theta_3 \in (0,1)$ such that
\begin{align*}
\mathcal L_i(\overline U)(t,x)
    &\ge \frac{1}{3} \sum_{j=1}^m \frac{\partial F_i}{\partial u_j} \big(\theta_1 \Phi(\xi(t,x)) \big) \Phi_j(\xi(t,x)) 
       - \sum_{j=1}^m \frac{\partial F_i}{\partial u_j} \big(\theta_2 \Phi(\xi(t,x)) \big) \Phi_j(\xi(t,x)) \\
    &\quad - \sum_{j=1}^m \frac{\partial F_i}{\partial u_j} \big(\overline U(t,x) - \theta_3 \delta Q_0 \big) \delta q_j^0 \\
    &\ge -\frac{1}{3} \Lambda \sum_{j=1}^m \Phi_j(\xi(t,x)) - \Lambda \sum_{j=1}^m \Phi_j(\xi(t,x)) - \delta \sum_{j=1}^m \mu_{ij}^0 q_j^0 \\
    &\ge -\frac{4}{3} \Lambda \sum_{j=1}^m \Phi_j(\xi(t,x)) + \delta \varpi q_* \\
    &\ge 0.
\end{align*}

\textit{Step 3: Proof of Lemma \ref{tran}.} 
By Lemma \ref{invasion} and the fact that $\overline U(0,x) > \mathbf 0$ for all $|x|\le R$, there exists $T < 0$ such that
\[
U(T,x) \le \min\left(\overline U(0,x), \Phi(C + h_\mu(0)) + \delta Q_0, \mathbf 1 \right), \quad |x|\le R.
\]
From \cite[Theorem~1.6]{SW18}, $U(t,x)$ has a unique global mean speed equal to the planar wave speed $c$. Together with Lemma \ref{invasion},  by decreasing $T$ is necessary, there holds
\begin{equation*}
U(t,x)\leq \Phi(C+h_\mu(0))+\delta Q_0\ \ \text{ for all } t\leq T \text{ and }|x|\leq R- \frac{\sqrt3}{2}c(t-T).
\end{equation*}
By (\ref{l1.1}), we have
\begin{equation*}
U(t,x)\leq \Phi(C+h_\mu(0))+\delta Q_0\ \ \text{ for all } t\leq T\text{ and }|x|\leq R- \Big(\frac{\sqrt3}{3}c+\frac{\sqrt3}{2}\mu\overline D\Big)(t-T).
\end{equation*}
Combining this with \eqref{dpq} and \eqref{l2.a1}, one gets
\begin{align}\label{IE}
U(t+T,x)\leq \Phi(C+h_\mu(0))+\delta Q_0\leq 2\delta q^*\mathbf 1\leq 4\varepsilon_0\mathbf 1\ \ \text{ for all }(t,x)\in E.
\end{align}
On the other hand, by \eqref{h},
\begin{equation*}
\xi(t,x) \le -\frac{\sqrt3}{3} |x| - \Big(\frac{c}{3} + \frac{\mu \bar D}{2}\Big) t + C + \frac{\sqrt3}{3} R + h_\mu(0) = C + h_\mu(0)
\end{equation*}
for all $(t,x) \in \partial E:=
 \Big\{ (t,x) \in \mathbb R \times \mathbb R^2 : t \le 0, \ |x| = -\Big(\frac{\sqrt3}{3} c + \frac{\sqrt3}{2} \mu \bar D \Big) t + R \Big\}$.
Since $\Phi' \ll \mathbf 0$, it follows that
\begin{align}\label{OPE}
\overline U(t,x) \ge \min\left(\Phi(C + h_\mu(0)) + \delta Q_0, \mathbf 1 \right)\ge U(t+T,x) \quad \text{for all }(t,x) \in \partial E.
\end{align}

Define
\begin{align*}
  \varepsilon^*=\inf\left\{\varepsilon>0:U(t+T,x)-\delta\varepsilon Q_0\leq\overline U (t,x) \text{ for all }(t,x)\in E\right\}.
\end{align*}
We claim that $\varepsilon^*=0$.
Assume by contradiction that $\varepsilon^*>0$.
Then there exists an index $i_0\in\{1,\dots,m\}$ and sequences $(\varepsilon_{n})_{n\in\mathbb N}\subset\mathbb R$ and
 $(t_{n},x_{n})_{n\in\mathbb N}\subset E$ such that
$\varepsilon_{n}\ge\varepsilon^*$ for all $n\in\mathbb N$, 
$\varepsilon_{n}\to\varepsilon^*$ and
\begin{equation}\label{l1.5}
\overline U_{i_0}(t_{n},x_{n}) - U_{i_0}(t_{n}+T,x_{n}) 
+ \delta \varepsilon_{n} q_{i_0}^0 \to 0 
\quad \text{as } n\to\infty.
\end{equation}
If $t_n\to-\infty$, then
$U_{i_0}(t_n,x_n)\to0$ as $n\to\infty$ uniformly in $E$ by Lemma \ref{invasion}. 
This implies that $\overline U_{i_0}(t_{n},x_{n}) 
+ \delta \varepsilon_{n} q_{i_0}^0 \to 0$ 
as $ n\to\infty$, a contradiction with
 $\overline U_{i_0}(t_n,x_n) + \delta \varepsilon_{n} q_{i_0}^0 
\ge \delta \varepsilon^* q_{i_0}^0$ for all $n\in\mathbb N$.
Hence there exists $(t^*,x^*)\in E$ such that $(t_n,x_n)\to(t^*,x^*)$ as $n\to\infty$.
By \eqref{l1.5}, one has
\begin{equation}\label{txx}
\overline U_{i_0}(t^*,x^*) - U_{i_0}(t^*+T,x^*) 
+ \delta \varepsilon^* q_{i_0}^0 = 0.
\end{equation}

Set the function $Z = (Z_1, \dots, Z_m)$ by
$Z(t,x) = \overline U(t,x) - U(t+T,x) + \delta \varepsilon^* Q_0$.
From \eqref{muij} and \eqref{IE}, for each $i = 1, \dots, m$, we have
\begin{align*}
&(U_i(t+T,x) - \delta \varepsilon^* q_i^0)_t
- D_i \Delta (U_i(t+T,x) - \delta \varepsilon^* q_i^0)
- F_i(U(t+T,x) - \delta \varepsilon^* Q_0)\\
={}& F_i(U(t+T,x)) - F_i(U(t+T,x) - \delta \varepsilon^* Q_0)\\
\le{}& \delta \varepsilon^* 
\sum_{j=1}^m \mu_{ij}^0 q_j^0
\le -\delta \varepsilon^* \varpi q_i^0
\le 0.
\end{align*}
Together with Step 2 and (A4), there exists an $m$-order matrix function
$B(t,x) = (b_{ij}(t,x))_{1 \le i,j \le m}$, 
where each entry $b_{ij}(t,x)$ is bounded and satisfies $b_{ij}(t,x) \ge 0$ for $i \ne j$, 
such that
\[
Z_t(t,x) - D\Delta Z(t,x) + B(t,x)Z(t,x) \ge \mathbf 0
\quad \text{for all } (t,x)\in E.
\]
By the definition of $\varepsilon^*$, together with \eqref{OPE} and \eqref{txx}, one obtains 
$Z_{i_0}(t^*,x^*) = 0$ and $Z_{i_0}(t,x) \ge 0$ for all $(t,x)\in\overline E$.
Then, by the maximum principle (see \cite{PM}), it follows that $Z_{i_0}(t,x) \equiv 0$ in $E$.
However, by \eqref{OPE}, we have
\[
\overline U_{i_0}(t,x)
= U_{i_0}(t+T,x) - \delta \varepsilon^* q_{i_0}^0
< \overline U_{i_0}(t,x)
\quad \text{for } (t,x)\in\partial E,
\]
which leads to a contradiction.

As a conclusion, $\varepsilon^* = 0$.
It follows that
$U(t+T,x) \le \overline U(t,x) \le \Phi(\xi(t,x)) + \delta Q_0$
for all $(t,x)\in E$.
Since $\delta$ was arbitrary, we obtain
\[
U(t+T,x) \le \overline U(t,x) \le \Phi(\xi(t,x))
\quad \text{for all } (t,x)\in E.
\]
By \eqref{h}, there holds
\[
\xi(t,x) \ge -\Big(\frac{c}{3} + \frac{\mu\overline D}{2}\Big)t + C \ge 0,
\quad  t \le 0,\ |x| < R.
\]
From \eqref{1.a}, it follows that for each $i = 1, \dots, m$,
\[
U_i(t,x) \le \Phi_i(\xi(t-T,x))
\le a e^{-b\xi(t-T,x)}
\le a e^{b\left(\frac{c}{3} + \frac{\mu\overline D}{2}\right)(t-T)},
\quad t \le T,\ |x| < R.
\]
By standard parabolic interior estimates, there exists a constant $a_0 > 0$ such that for each $i = 1, \dots, m$,
\[
|\nabla U_i(t,x)|
\le a_0 e^{b\left(\frac{c}{3} + \frac{\mu\overline D}{2}\right)(t-T)},
\quad t \le T,\ |x| < R.
\]
By setting 
$\alpha = a e^{-b\left(\frac{c}{3} + \frac{\mu\overline D}{2}\right)T}$, 
$\beta = a_0 e^{-b\left(\frac{c}{3} + \frac{\mu\overline D}{2}\right)T} $
and
$\eta = b\left(\frac{c}{3} + \frac{\mu\overline D}{2}\right)$,
we finish the proof of Lemma \ref{tran}.
\end{proof}

\section{Entire solution emanating from the V-shaped traveling front}\label{s3}
This section is devoted to establishing the existence and monotonicity of the entire solution emanating from the V-shaped traveling front, that is, we prove Theorem \ref{t1}.

Since $K$ is a compact subset of $\mathbb{R}^2$, there exists a constant $L > 0$ such that
\begin{align*}
|x| < L \quad \text{for all } x \in \overline K.
\end{align*}
Note that the V-shaped traveling front is a homogeneous transition front of \eqref{1.1} connecting $\mathbf 0$ and $\mathbf 1$. 
By Lemma \ref{tran}, there exist constants $T = T(L) < 0$, $\alpha = \alpha(T) > 0$, $\beta = \beta(T) > 0$ and $\eta > 0$ such that, for each $i = 1, \dots, m$,
\begin{equation}\label{TL}
V_i(x_1, x_2 - s(t + 1)) \le \alpha e^{\eta t}
\quad \text{and} \quad
|\nabla V_i(x_1, x_2 - s(t + 1))| \le \beta e^{\eta t}
\end{equation}
for all $t \le T$ and $x \in \overline K$. 
Moreover, from the proof of Lemma \ref{tran}, the constant $\eta$ can be chosen independently of $L$.

Let $\tilde{\zeta} : \overline{\Omega} \to \mathbb{R}$ be a nonnegative $C^2$ function with compact support in $\overline{\Omega}$ satisfying $\nu \cdot \nabla \tilde{\zeta} = 1$ on $\partial \Omega$. 
Then, the functions $\tilde{\zeta}$, $\tilde{\zeta}_{x_i}$ ($i = 1, 2$) and $\Delta \tilde{\zeta}$ are bounded and compactly supported. 
Such a truncated function $\tilde{\zeta}$ can be constructed by applying the classical distance function near the boundary $\partial \Omega$, as described in \cite{GT}.
Consequently, there exists a constant $C_1 > 0$ such that
\[
\zeta(x) = \tilde{\zeta}(x) + C_1 > 1 
\quad \text{for all } x \in \overline{\Omega}
\]
and
\begin{equation}\label{zeta}
\left\| \frac{\Delta \zeta}{\zeta} \right\|_{L^{\infty}(\overline{\Omega})}
\le \min\left\{\frac{\eta}{\overline D},\frac{\varpi}{2\bar D}\right\}.
\end{equation}

We now proceed to construct a pair of sub- and supersolutions of \eqref{1.1} based on the V-shaped traveling front $V(x_1, x_2 - st)$. 
According to  \cite{NT}, there exists a unique convex function $\psi(x)$ satisfying
\begin{equation}\label{2C.8}
  s = \frac{\psi''(x)}{1 + (\psi'(x))^2} + c \sqrt{1 + (\psi'(x))^2}, 
  \quad x \in \mathbb{R}
\end{equation}
together with the bounds
\begin{align}\label{psii}
|\psi'(x)| \le m_* 
\quad \text{and} \quad
m_* |x| \le \psi(x) \le m_* |x| + m_0
\end{align}
for some constant $m_0 > 0$, where $m_*$ is defined in \eqref{mstar}.

\begin{lem}\label{l2.2}
There exist constants $w > 0$ and $\bar T < 0$ such that the functions 
${V}^{\pm} = (V_1^{\pm}, \dots, V_m^{\pm})$ defined by
\begin{equation*}
  V^{-}(t,x) = 
  \max\left\{
    V(x_1, z^-) - \frac{2\beta}{q_*} Q(\xi^-(t,x)) \zeta(x) e^{\eta t},
    \mathbf{0}
  \right\}
\end{equation*}
and
\begin{equation*}
  V^{+}(t,x) =
  \min\left\{
    V(x_1, z^+) + \frac{2\beta}{q_*} Q(\xi^+(t,x)) \zeta(x) e^{\eta t},
    \mathbf{1}
  \right\}
\end{equation*}
are sub- and supersolutions of \eqref{1.1} for all 
$t \le \bar T$ and $x  \in \overline{\Omega}$ respectively,
where $\beta>0$ is given by \eqref{TL}, the function $Q(\cdot)$ is defined in \eqref{dpq} and
\[
\xi^{\pm}(t,x) = z^{\pm} - \psi(x_1), 
\qquad 
z^{\pm} = x_2 - s t \mp w e^{\eta t}.
\]
\end{lem}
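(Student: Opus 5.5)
I will verify the supersolution property of $V^+$ in detail; the subsolution $V^-$ is handled symmetrically, with the signs of the perturbations reversed and $\mathbf 0$ replacing $\mathbf 1$. Fix $i$ and work at points where $V_i^+<1$. Write $\sigma=\frac{2\beta}{q_*}$ and $W=V(x_1,z^+)$. Since $V$ solves the traveling-wave equation, $V(x_1,x_2-st)$ solves \eqref{1.1} in the interior, so
\begin{align*}
\mathcal N_i:=\;& (V^+_i)_t-D_i\Delta V^+_i-F_i(V^+)\\
=\;& -w\eta e^{\eta t}\,\partial_zV_i(x_1,z^+)+F_i(W)-F_i(V^+)\\
&+\sigma e^{\eta t}\Big[\eta q_i\zeta-w\eta e^{\eta t}q_i'\zeta-D_i\big(q_i''(1+\psi'^2)\zeta-q_i'\psi''\zeta+2q_i'\nabla\xi^+\!\cdot\nabla\zeta+q_i\Delta\zeta\big)\Big].
\end{align*}
Here the key sign is $-\partial_zV_i>0$, since $V_z\ll\mathbf 0$. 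The derivatives $q_i',q_i''$ are bounded by $M$ from \eqref{pq}, $\psi',\psi''$ are bounded by \eqref{psii} and \eqref{2C.8}, and $\zeta$ is bounded above and satisfies $\zeta>1$.

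Next I check the boundary condition on $\partial\Omega$. We have $\nu\cdot\nabla V^+_i=\nu\cdot\nabla V_i(x_1,z^+)+\sigma e^{\eta t}\big(q_i'(\nu\cdot\nabla\xi^+)\zeta+q_i\,\nu\cdot\nabla\zeta\big)$. Since $\partial\Omega=\partial K$, the bound \eqref{TL} applies there. I will require $\bar T\le T$ and choose $w$ so that $z^+=x_2-st-we^{\eta t}\ge x_2-s(t+1)$ for $t\le\bar T$, that is $we^{\eta t}\le s$. By monotonicity, and because the gradient estimate of Lemma \ref{tran} holds on a whole neighborhood for all sufficiently negative times, this gives $|\nabla V_i(x_1,z^+)|\le\beta e^{\eta t}$ on $\partial K$. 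Since $\nu\cdot\nabla\zeta=1$, we get $\sigma q_i\ge 2\beta$. The term $q_i'$ is harmless because $\xi^+\to+\infty$ uniformly on $\overline K$ as $t\to-\infty$; then $Q\equiv Q_0$ there and $q_i'=0$. Hence $\nu\cdot\nabla V_i^+\ge-\beta e^{\eta t}+2\beta e^{\eta t}>0$.

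For the interior inequality I split into three regions according to $\xi^+$. On $\{\xi^+\ge C'\}$ with $C'$ large, the profile $V$ is close to $\mathbf 0$ by \eqref{1.b}, because $\frac cs(z-m_*|y|)$ is large there. The perturbation $\sigma Q\zeta e^{\eta t}$ is small for $t\le\bar T$, and $Q=Q_0$ there. So \eqref{muij} gives $F_i(W)-F_i(V^+)\ge \sigma\varpi q_i\zeta e^{\eta t}$. Together with $\eta q_i\zeta\ge0$ and $D_i|\Delta\zeta|q_i\le\frac{\varpi}{2}q_i\zeta$ from \eqref{zeta}, this leaves $\frac{\varpi}{2}\sigma q_i\zeta e^{\eta t}$ to absorb the $q_i'$-terms, which vanish here. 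On $\{\xi^+\le -C'\}$ I argue symmetrically near $\mathbf 1$ with $Q=Q_1$. The term $\eta q_i\zeta$ alone already beats $D_i q_i|\Delta\zeta|$ by \eqref{zeta}, although the $\varpi$-gain is what really closes it.

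The main obstacle is the middle strip $\{|\xi^+|\le C'\}$. There the nonlinearity only gives $F_i(W)-F_i(V^+)\ge-\Lambda\sigma q^*\zeta e^{\eta t}$, and the $q_i',q_i''$ terms are of size $O(\sigma e^{\eta t})$. I plan to use the term $-w\eta e^{\eta t}\partial_zV_i$. This needs a uniform lower bound $-\partial_zV_i\ge\kappa>0$ on the set $\{|z-\psi(y)|\le C'\}$. I would justify it from $V_z\ll\mathbf0$ on compacts together with the asymptotics \eqref{1.b}: far away, $V$ is close to the planar profile $\Phi$ along the line, and $\Phi'\ll\mathbf0$ on bounded intervals, with elliptic estimates upgrading the $C^0$ convergence to $C^1$. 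Taking $w$ large, namely $w\eta\kappa\ge\sigma(\Lambda q^*+C'')\sup\zeta+\dots$, yields $\mathcal N_i\ge0$. I then take $\bar T$ so negative that $we^{\eta\bar T}\le s$, the smallness conditions of the outer regions hold, and the boundary claim above is valid. Finally, the min with $\mathbf 1$ and the max with $\mathbf 0$ preserve the super- and subsolution properties by Definition \ref{dss}.
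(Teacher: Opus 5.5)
Your proposal is correct and follows essentially the paper's proof. It uses the same three-region split in $\xi^+$: $Q\equiv Q_0$ resp.\ $Q\equiv Q_1$ with the $\varpi$-gain from \eqref{muij} in the outer regions, the term $-w\eta e^{\eta t}\partial_zV_i\ge w\eta\kappa e^{\eta t}$ with $w$ large in the middle strip, and on $\partial\Omega$ the facts $Q\equiv Q_0$, $\nu\cdot\nabla\zeta=1$ and \eqref{TL}, giving $-\beta e^{\eta t}+2\beta e^{\eta t}\ge 0$.

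Your expansion of $(V_i^+)_t$ drops the term $-s\,q_i'(\xi^+)\zeta$. The term $w\eta e^{\eta t}q_i'\zeta$ is bounded independently of $w$ only because of your later constraint $we^{\eta\bar T}\le s$ (the paper uses $we^{\eta\bar T}\le\min\{1,1/\eta\}$ for this). Both are bounded terms absorbed in the choice of $w$, so nothing breaks. Your time-shift condition $we^{\eta t}\le s$ for invoking \eqref{TL}, and your explicit requirement that the perturbation be small near $\mathbf 0$, are slightly more careful than the paper's corresponding steps.
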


\begin{proof}
We only prove that $V^+$ is a supersolution of \eqref{1.1}, the proof for the subsolution $V^-$ is similar. 
We first introduce some auxiliary parameters.
Let $\vp_0$, $q^*$, $q_*$, $M$, $\Lambda$ and $\bar D$ be the constants defined in Section \ref{s2}.  Take any
\begin{equation}\label{dl}
0 < \delta \le\frac{2\varepsilon_0}{q_*}.
\end{equation} 
By \eqref{1.b} and the limits $\Phi(-\infty) = \mathbf{1}$, $\Phi(\infty) = \mathbf{0}$, we have  
$V(y,z) \to \mathbf{1}$ as $z - m_* |y| \to -\infty$ and  
$V(y,z) \to \mathbf{0}$ as $z - m_* |y| \to \infty$.  
Hence there exists a constant $C > 1$ such that
\begin{equation}\label{2l.3}
\begin{cases}
V(y,z) \ge (1 - \delta q_*) \cdot\mathbf{1} & \text{for all } z - m_* |y| \le -C,\\[2mm]
V(y,z) \le \delta q_* \cdot\mathbf{1} & \text{for all } z - m_* |y| \ge C.
\end{cases}
\end{equation}
By the properties of $V$, there exists $\kappa > 0$ satisfying
\begin{equation}\label{kap}
\max_{ i =1,\cdots, m}
\left\{
\sup_{-C \le z - m_* |y| \le C} (V_i)_z(y,z)
\right\}
= -\kappa < 0.
\end{equation}
Choose $w > 1$ sufficiently large such that
\begin{align}\label{om}
\omega \eta \kappa
\ge{}&
\frac{2\beta}{q_*}
\left(
 (s + 1)M +q^* \Lambda
 + \bar D M \left( \frac{s^2}{c^2} + \|\psi''\|_{L^{\infty}(\mathbb{R})} \right)
\right) \|\zeta\|_{L^{\infty}(\overline{\Omega})} \\
&+ \frac{2\beta}{q_*} q^* \bar D \|\Delta \zeta\|_{L^{\infty}(\overline{\Omega})}
+ \frac{4\beta s}{c q_*} \bar D M \|\nabla \zeta\|_{L^{\infty}(\overline{\Omega})}. \notag
\end{align}  
Take $\overline{T} \le T = T(L)$ sufficiently small such that
\begin{equation}\label{BART}
\omega e^{\eta \bar T} \le \min\left\{1, \frac{1}{\eta}\right\}
\quad \text{and} \quad
-L - m_* L - s \overline{T} - 1 \ge 1.
\end{equation}

Fix an index $i \in \{1, \dots, m\}$.  
We now verify the boundary condition.  
By \eqref{psii} and \eqref{BART}, it follows that
\[
\xi^+(t,x) 
\ge x_2 - m_* |x_1| - s \bar T - 1
\ge -L - m_* L - s \bar T - 1
\ge 1,
\quad t \le \bar T,\ x \in \partial \Omega.
\]
Hence, by \eqref{dpq}, we have  
$Q(\xi^+(t,x)) = Q_0$
for all $t \le \bar T$ and $x \in \partial \Omega$.  
Moreover, using  $ \nu\cdot \nabla \zeta=1$, \eqref{TL} and \eqref{BART}, we obtain
\[
\nu \cdot\nabla V_i^+(t,x) 
=\nu \cdot \nabla V_i(x_1, z^+) 
+ \frac{2\beta}{q_*} q_i^0 e^{\eta t}
\ge -\beta e^{\eta t} + 2\beta e^{\eta t}
\ge 0
\]
for all $(t,x) \in (-\infty, \bar T] \times \partial \Omega$ such that $V_i^+(t,x) < 1$.

It suffices to show that 
\[
\mathcal L_i(V^+)(t,x)
:= (V_i^+)_t(t,x) - D_i \Delta V_i^+(t,x) - F_i(V^+(t,x))
\ge 0
\]
for all $t \le \overline T$ and $x \in \overline \Omega$ such that $V_i^+(t,x) < 1$.
By direct calculations, we obtain
\begin{align*}
\mathcal L_i(V^+)(t,x)
={}& - \omega \eta e^{\eta t}(V_i)_{x_2}(x_1, z^+)
+ \frac{2\beta}{q_*} q_i'(\xi^+) (-s - \omega \eta e^{\eta t}) \zeta(x) e^{\eta t} \\
&+ \frac{2\beta}{q_*} \eta q_i(\xi^+) \zeta(x) e^{\eta t}
- D_i \frac{2\beta}{q_*} \Delta q_i(\xi^+) \zeta(x) e^{\eta t}\\
&- D_i \frac{2\beta}{q_*} q_i(\xi^+) \Delta \zeta(x) e^{\eta t}
- D_i \frac{4\beta}{q_*} \nabla q_i(\xi^+) \cdot \nabla \zeta(x) e^{\eta t}\\
&+ F_i(V(x_1,z^+)) - F_i(V^+(t,x)).
\end{align*}

If $\xi^+ \le -C- m_0$, then $z^+ - m_*|x_1| \le -C$ and $Q(\xi^+) = Q_1$ by \eqref{psii} and \eqref{dpq}.
Using \eqref{dl} and \eqref{2l.3}, we have
\[
(1 - 4\varepsilon_0)\cdot\mathbf 1
\le (1 - \delta q_*)\cdot\mathbf 1
\le V^+(t,x)
\le V(x_1,z^+)
\le \mathbf 1.
\]
By \eqref{muij}, it follows that
\begin{align*}
F_i(V^+(t,x)) - F_i(V(x_1,z^+))
&\le \frac{2\beta}{q_*} \zeta(x) e^{\eta t} \sum_{j=1}^m \mu_{ij}^1 q_j^1
\le -\frac{2\beta}{q_*} \varpi q_i^1 \zeta(x) e^{\eta t}
\le 0. \label{ineqFi}
\end{align*}
Since $V_z(y,z) \ll \mathbf 0$ for all $(y,z) \in \R^2$, by \eqref{D} and \eqref{zeta} we obtain
\begin{align*}
\mathcal L_i(V^+)(t,x)
\ge  \frac{2\beta}{q_*} q_i^1 \zeta(x) e^{\eta t}
\left( \eta - \overline D \Big\|\frac{\Delta \zeta}{\zeta}\Big\|_{L^\infty(\overline\Omega)} \right)
\ge0.
\end{align*}

If $\xi^+ \ge C$, then $z^+ - m_*|x_1| \ge C$ and $Q(\xi^+) = Q_0$ by \eqref{dpq} and \eqref{psii}.
Using \eqref{dl} and \eqref{2l.3}, we obtain
\[
\mathbf 0 \le V^+(t,x)
\le V(x_1,z^+)
\le \delta q_* \cdot\mathbf 1
\le 4\varepsilon_0\cdot \mathbf 1.
\]
By \eqref{muij}, it follows that
\begin{align*}
F_i(V^+(t,x)) - F_i(V(x_1,z^+))
&\le \frac{2\beta}{q_*} \zeta(x) e^{\eta t} \sum_{j=1}^m \mu_{ij}^0 q_j^0
\le -\frac{2\beta}{q_*} \varpi q_i^0 \zeta(x) e^{\eta t}
\le 0. \label{ineqFi}
\end{align*}
A similar argument yields $\mathcal L_i(V^+)(t,x) \ge 0$.

If $-C - m_0 \le \xi^+ \le C$, then $-C \le z^+ - m_*|x_1| \le C$ by \eqref{psii}.
It follows from \eqref{mstar}, \eqref{pq} and \eqref{psii} that
\begin{equation}\label{naq}
|\nabla q_i(\xi^+)| = |q_i'(\xi^+)| \sqrt{1 + (\psi'(x_1))^2}
\le |q_i'(\xi^+)| \sqrt{1 + m_*^2}
\le \frac{sM}{c}
\end{equation}
and
\begin{equation}\label{deq}
|\Delta q_i(\xi^+)| = |q_i''(\xi^+)(1 + (\psi'(x_1))^2) -q_i'(\xi^+) \psi''(x_1)|
\le \left(\frac{s^2}{c^2} + \|\psi''\|_{L^\infty(\mathbb R)}\right) M.
\end{equation}
By \eqref{pq}  and \eqref{LA}, we obtain
\begin{align*}
F_i(V^+(t,x)) - F_i(V(x_1,z^+))
\le \frac{2\beta}{q_*} \Lambda q_i(\xi^+) \zeta(x) e^{\eta t} 
\le \frac{2\beta}{q_*} q^* \Lambda \|\zeta\|_{L^\infty(\overline\Omega)} e^{\eta t}.
\end{align*}
Combining \eqref{pq}, \eqref{kap}, \eqref{om} and \eqref{BART}, we derive
\begin{align*}
\mathcal L_i(V^+)(t,x)
\ge {}& \omega \eta \kappa e^{\eta t}
- \frac{2\beta}{q_*} (s + 1) M \|\zeta\|_{L^\infty(\overline\Omega)} e^{\eta t}
- \frac{2\beta}{q_*} q^* \Lambda \|\zeta\|_{L^\infty(\overline\Omega)} e^{\eta t}\\
&- \frac{2\beta}{q_*} \overline D M
\left( \frac{s^2}{c^2} + \|\psi''\|_{L^\infty(\mathbb R)} \right)
\|\zeta\|_{L^\infty(\overline\Omega)} e^{\eta t}\\
&- \frac{2\beta}{q_*} q^* \overline D \|\Delta\zeta\|_{L^\infty(\overline\Omega)} e^{\eta t}
- \frac{4\beta s}{cq_*} \overline D M \|\nabla\zeta\|_{L^\infty(\overline\Omega)} e^{\eta t}\\
\ge {}& 0.
\end{align*}

Hence, the proof is complete.
\end{proof}

\begin{proof}[Proof of Theorem \ref{t1}]
For any $n \in \mathbb{N} \cap [1 + |\bar T|, \infty)$, let $u_n(t,x)$ denote the solution of \eqref{1.1} for all $t \ge -n$ and $x   \in \overline{\Omega}$ with the initial condition $u_n(-n, x) = V^+(-n, x)$. 
It follows from Lemma \ref{l2.2} that 
$\mathbf 0 \le V^-(-n, x) \le u_n(-n, x) \le \mathbf 1$ for all  $x \in \overline{\Omega}$.
By the comparison principle, we have
\begin{equation}\label{Vu}
\mathbf 0 \le V^-(t, x) \le u_n(t, x) \le V^+(t, x) \le \mathbf 1,
\quad t \in [-n, \bar T],\ x \in \overline{\Omega}.
\end{equation}
Hence, $\mathbf 0 \le u_n(-n+1, \cdot) \le V^+(-n+1, \cdot) = u_{n-1}(-n+1, \cdot) \le \mathbf 1$ in $\overline{\Omega}$. 
Applying the comparison principle again yields $u_n(t, x) \le u_{n-1}(t, x)$ for all $t \in [-n+1, \infty)$ and $x \in \overline{\Omega}$, it implies that $u_n$ is decreasing in $n$.
By standard parabolic estimates, there exists an entire solution $u^*$ of \eqref{1.1} defined in $\mathbb{R} \times \overline{\Omega}$ such that 
\[
u_n(t, x) \to u^*(t, x) 
\quad \text{uniformly in } (t, x) \in \mathbb{R} \times \overline{\Omega}
\text{ as } n \to \infty.
\]
From \eqref{Vu}, we obtain
\[
V^-(t, x) \le u^*(t, x) \le V^+(t, x)
\quad \text{for all } t \le \bar T \text{ and } x \in \overline{\Omega}.
\]
By the definition of $V^\pm$, it follows that
\[
u^*(t, x) \to V(x_1, x_2 - s t)
\quad \text{uniformly in } x \in \overline{\Omega} \text{ as } t \to -\infty.
\]

Since $V_z(y, z) \ll \mathbf 0$ for all $(y, z) \in \mathbb{R}^2$, we have $V^+(t, x)$ is increasing for sufficiently negative $t$, it implies that $(u_n)_t(-n, x) \gg \mathbf 0$ for large $n$. 
Consider the following  system
\begin{equation*}
\begin{cases}
(u_n)_{tt} - D \Delta (u_n)_t - F'(u_n)(u_n)_t = \mathbf 0, & (t, x) \in (-n, \infty) \times \Omega,\\
\nu \cdot \nabla (u_n)_t = 0, & (t, x) \in (-n, \infty) \times \partial \Omega,\\
(u_n)_t(-n, x) \gg \mathbf 0, & x \in \Omega.
\end{cases}
\end{equation*}
By the strong maximum principle and the Hopf boundary lemma, it follows that 
$(u_n)_t(t, x) \gg \mathbf 0$ for all $t \in [-n, \infty)$ and $x \in \overline{\Omega}$. 
Passing to the limit as $n \to \infty$, we have $u^*_t(t, x) \ge \mathbf 0$ for all $(t, x) \in \mathbb{R} \times \overline{\Omega}$. 
Applying again the strong maximum principle and the Hopf boundary lemma gives
\begin{align*}
u^*_t(t, x) \gg \mathbf 0
\quad \text{for all } (t, x) \in \mathbb{R} \times \overline{\Omega}.
\end{align*}
Furthermore, by the same arguments, we also obtain
$\mathbf 0 \ll u^*(t, x) \ll \mathbf 1$
for all $ (t, x) \in \mathbb{R} \times \overline{\Omega}$.
This completes the proof of Theorem \ref{t1}.
\end{proof}

\section{Large-time behavior of entire solution}\label{s4}

This section studies the large-time behavior of the entire solution given by Theorem \ref{t1} under the complete propagation condition, by employing the sub- and supersolution method and the stability of the V-shaped traveling front.

\subsection{Sub- and supersolutions}
In this subsection, we construct several sub- and supersolutions in preparation for the analysis.
Let $\varepsilon_0$, $q_*$, $q^*$, $\varpi$, $M$, $\bar{D}$, $\Lambda$ be the positive constants defined as in Section \ref{s2}, and let $P$, $Q$, $\zeta$ and $\psi$ be the functions defined in \eqref{dpq}, \eqref{zeta} and \eqref{psii}, respectively.
Since the obstacle $K$ is compact, there exists $C_K>0$ such that
\begin{align}\label{ck}
x_2-m_*|x_1|\leq C_K\quad\text{ for all }x\in\partial\Omega. 
\end{align}

It follows from \cite{wangzhicheng2012} that there exist constants $\tau_1$ and $\tau_2$ such that
\begin{equation*}
 \Phi\Big(\frac{c}{s}(z - m_* |y| + \tau_1)\Big)
 \le V(y, z)
 \le \Phi\Big(\frac{c}{s}(z - m_* |y| + \tau_2)\Big)
\end{equation*}
for all  $(y,z) \in \mathbb{R}^2$. 
Therefore, in the same way as \cite[Lemma 3.1]{Guomonobe}, there exist positive constants $d_1$, $d_2$, $\tilde\lambda_1$ and $\tilde\lambda_2$ such that,
for each $i = 1, \dots, m$,
\begin{equation}\label{V22}
\begin{cases}
\|\nabla V_i(y, z)\|_{L^{\infty}(\mathbb{R}^2)} \le d_1 e^{\tilde\lambda_1 (z - m_* |y|)}&\text{for all }(y,z)\in\R^2\text{ such that }
z - m_* |y| \le 0,\\
\|\nabla V_i(y, z)\|_{L^{\infty}(\mathbb{R}^2)} \le d_2 e^{-\tilde\lambda_2 (z - m_* |y|)}
&\text{for all }(y,z)\in\R^2\text{ such that } 
z - m_* |y| > 0.
\end{cases}
\end{equation}

\begin{lem}\label{sub1}
Let $H$ be any fixed constant. Then for any 
small $\delta>0$,
there exist constants $\sigma>0$, $\rho>0$ and $T>0$ ($\sigma$ and $\rho$ are independent of $\delta$) such that the function
\[
\underline{w}_1(t,x) = \max\left\{ w_1(t,x), \mathbf 0 \right\}
\]
is a subsolution of \eqref{1.1} for all $t \ge 0$ and $x \in \overline{\Omega}$, 
where the function $w_1 = (w_1^1, \dots, w_1^m)$ is defined by
\[
w_1(t,x) = V(x_1, z) - \delta Q(\xi_1(t,x)) \zeta(x) e^{-\sigma t}
\]
with
\[
\xi_1(t,x) = z - \psi(x_1), \quad 
z = x_2 - s(t+T) + \rho \delta (1 - e^{-\sigma t}) + H.
\]
\end{lem}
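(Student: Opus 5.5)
The plan is to mirror the proof of Lemma \ref{l2.2}, now with the perturbation decaying like $e^{-\sigma t}$ rather than growing like $e^{\eta t}$. The role of the time-shift $\omega e^{\eta t}$ is taken by $\rho\delta(1-e^{-\sigma t})$, whose time derivative is $\rho\delta\sigma e^{-\sigma t}$.

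First fix the constants, independently of $\delta$. Choose
\[
\sigma \le \min\left\{\frac{\varpi}{4},\frac{\lambda}{2}\right\},
\]
where $\lambda$ denotes the exponent appearing in the boundary estimate below. Then choose $C>1$ as in \eqref{2l.3}, with $\delta$ replaced by a fixed small $\delta_0 \le 2\varepsilon_0/q_*$, and let $\kappa$ be given by \eqref{kap}. Finally choose $\rho$ so large that $\rho\sigma\kappa$ dominates the analogue of the right-hand side of \eqref{om}, with $2\beta/q_*$ replaced by $1$ and $\eta$ replaced by $\sigma$. For $\delta\le\delta_0$ we also have $0\le\delta\rho(1-e^{-\sigma t})\le \rho\delta$, so $z$ differs from $x_2-s(t+T)+H$ by a bounded amount.

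\textbf{Interior inequality.} We must show $\mathcal L_i(w_1)\le 0$ wherever $w_1^i>0$. Expanding as in Lemma \ref{l2.2} gives
\begin{align*}
\mathcal L_i(w_1) ={}& \rho\delta\sigma e^{-\sigma t}(V_i)_z + \delta q_i'(\xi_1)\bigl(s-\rho\delta\sigma e^{-\sigma t}\bigr)\zeta e^{-\sigma t} + \delta\sigma q_i\zeta e^{-\sigma t}\\
&+ D_i\delta e^{-\sigma t}\bigl(\Delta q_i\,\zeta + q_i\Delta\zeta + 2\nabla q_i\cdot\nabla\zeta\bigr) + F_i(V) - F_i(w_1).
\end{align*}
We then split into three regions in $\xi_1$.

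1. In the region $\xi_1\le -C-m_0$ we have $Q=Q_1$ and $w_1$ lies near $\mathbf 1$. By \eqref{muij}, $F_i(V)-F_i(w_1)\le -\delta\varpi q_i^1\zeta e^{-\sigma t}$. This absorbs $\delta\sigma q_i\zeta$ and $D_i\delta q_i|\Delta\zeta|$, using \eqref{zeta} and $\sigma\le\varpi/4$. The term $(V_i)_z$ is negative.

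2. The region $\xi_1\ge C$ is handled the same way, using $Q_0$ and the state $\mathbf 0$. Here the cutoff $\max\{\cdot,\mathbf 0\}$ is harmless, because on it $\mathbf 0$ is a solution.

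3. In the middle region, \eqref{kap} gives $(V_i)_z\le-\kappa$. The estimates \eqref{naq}, \eqref{deq} and the Lipschitz bound \eqref{LA} bound the remaining terms by a constant times $\delta e^{-\sigma t}$. The choice of $\rho$ closes this case. We also need $\rho\delta\sigma\le 1$, which holds for small $\delta$.

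\textbf{Boundary condition (main obstacle).} This is the delicate step, since the decaying factor no longer comes from $t\to-\infty$. On $\partial\Omega$ we need
\[
\nu\cdot\nabla V_i(x_1,z) - \delta q_i(\xi_1)e^{-\sigma t}\le 0 .
\]
This uses $\nu\cdot\nabla\zeta=1$, together with the fact that $\nabla q_i(\xi_1)$ vanishes when $\xi_1$ lies outside $[0,1]$. The plan is to use $T$ to push the front far beyond the obstacle. By \eqref{ck}, on $\partial\Omega$ we have
\[
z-m_*|x_1|\le C_K - s(t+T) + \rho\delta_0 + H,
\]
which is very negative for $T$ large. Then \eqref{V22} gives
\[
|\nabla V_i|\le d_1 e^{\tilde\lambda_1(C_K+\rho\delta_0+H-sT)}e^{-\tilde\lambda_1 s t}.
\]
Moreover $\xi_1\le -C-m_0$ there, so $Q=Q_1$ and $q_i\ge q_*$. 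With $\lambda=\tilde\lambda_1 s$ and $\sigma\le\lambda/2$, it suffices that
\[
d_1e^{\tilde\lambda_1(C_K+\rho\delta_0+H-sT)}\le \delta q_*,
\]
which holds once $T=T(\delta,H)$ is large. This is exactly why $T$ depends on $\delta$ while $\sigma$ and $\rho$ do not.
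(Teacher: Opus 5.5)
Your proposal follows the paper's proof essentially step for step: the same expansion of $\mathcal L_i(w_1)$, the same three-region split in $\xi_1$ (using \eqref{muij} near $\mathbf 0$ and $\mathbf 1$, and $\rho\sigma\kappa$ in the middle strip), and the same boundary argument in which a large $T$ pushes $\partial\Omega$ into the region where $Q=Q_1$ and \eqref{V22} gives $|\nabla V_i|\le \delta q_* e^{-s\tilde\lambda_1 t}\le \delta q_* e^{-\sigma t}$ for $\sigma\le s\tilde\lambda_1$. The one difference is in your favour: you choose $C$, and hence $\kappa$ and $\rho$, from a fixed $\delta_0$ rather than from $\delta$ itself, which makes $\rho$ genuinely independent of $\delta$, whereas in the paper $C$ in \eqref{VC} is tied to $\delta$, so the claimed $\delta$-independence of $\rho$ is not literally what its proof delivers.
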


\begin{proof}
Let us first introduce some parameters and notations.
Take any 
\begin{align}\label{dll}
0<\delta<\min\left\{\frac{2\varepsilon_0}{q^*\|\zeta\|_{L^{\infty}(\overline\Omega)}},1\right\}\left(<\frac{2\varepsilon_0}{q^*}\right).
\end{align}
By the properties of the V-shaped traveling front, there exists a constant $C>1$ such that
\begin{equation}\label{VC}
\begin{cases}
V(y,z)\geq (1-\delta q_*)\cdot \mathbf1 &\text{for all } z - m_*|y| \le -C,\\
V(y,z)\leq \delta q_*\cdot \mathbf1 &\text{for all } z - m_*|y| \ge C.
\end{cases}
\end{equation}
Furthermore, there exists $\kappa>0$ such that
\begin{equation}\label{kapppp}
\max_{i=1,\cdots,m} \left\{ \sup_{-C \le z - m_*|y| \le C} (V_i)_z(y,z) \right\} = -\kappa < 0.
\end{equation}
Set
\begin{equation}\label{si}
\sigma := \min\left\{ s \tilde\lambda_1, \frac{\varpi}{2} \right\}.
\end{equation}
Take $\rho>0$ sufficiently large such that
\begin{align}\label{rho}
\sigma \rho \kappa 
\geq{}
&\left( \sigma q^*+s M+ \Lambda q^*+ 
\bar D  \left(\frac{s^2}{c^2} + \|\psi''\|_{L^\infty(\mathbb R)}\right) M\right)
\|\zeta\|_{L^\infty(\bar\Omega)} \\
&+ \bar D  q^*\|\Delta \zeta\|_{L^\infty(\bar\Omega)} 
+ \frac{2}{c}s\bar DM \|\nabla\zeta\|_{L^\infty(\bar\Omega)} \notag.
\end{align}
Choose $T>0$ large enough such that
\begin{equation}\label{T}
C_K - sT + \rho + H \le 0\ \text{ and }\
d_1 e^{\tilde\lambda_1(C_K - sT + \rho + M)} \le \delta q_*.
\end{equation}

Let $i \in \{1, \dots, m\}$ be any fixed index. We now verify the boundary condition. 
From \eqref{psii}, \eqref{ck}, \eqref{dll} and \eqref{T}, one has $\xi_1(t,x) \le C_K - sT + \rho + H \le 0$ for all $t \ge 0$ and $x \in \partial \Omega$, 
hence $Q(\xi_1(t,x)) = Q_1$ by \eqref{dpq}.
It follows from \eqref{ck}, \eqref{V22}, \eqref{dll}, \eqref{si} and \eqref{T} that
\begin{align*}
\nu\cdot\nabla V_i(x_1, z)   \le \|\nabla V_i(x_1, z)\|_{L^{\infty}(\mathbb R^2)}
&\le d_1 e^{\tilde\lambda_1(x_2 - s(t+T) + \rho \delta (1 - e^{-\sigma t}) + H - m_* |x_1|)}\\
&\le d_1 e^{-s \tilde\lambda_1 t} e^{\tilde\lambda_1(C_K- sT + \rho + H )}\\
&\le \delta q_* e^{-\sigma t}
\end{align*}
for all $t \ge 0$ and $x \in \partial \Omega$.
By $\nu\cdot\nabla \zeta  = 1$ on $\partial \Omega$ and \eqref{pq}, there holds
\begin{equation*}
\nu\cdot\nabla w_1^i(t,x)
=\nu\cdot \nabla V_i(x_1, z) - \delta q_i^1 e^{-\sigma t}
\le \delta q_* e^{-\sigma t} - \delta q^* e^{-\sigma t} = 0
\end{equation*}
for all $t \ge 0$ and $x \in \partial \Omega$.

Next, we check that
$
\mathcal L_i(w_{1})(t,x)
:= (w_{1}^i)_t(t,x) - D_i \Delta w_{1}^i(t,x) - F_i(w_{1}(t,x)) \le 0
$
for all $(t,x)\in[0,\infty)\times\Omega$.
By a direct calculation, we have
\begin{align*}
\mathcal L_i(w_1)(t,x)
={}& \sigma \delta \rho (V_z)(x_1,z) e^{-\sigma t}
+ \sigma \delta q_i(\xi_1)\zeta(x) e^{-\sigma t} \\
&- \delta q_i'(\xi_1)(-s + \sigma \rho \delta e^{-\sigma t})\zeta(x) e^{-\sigma t}
+ D_i \delta \Delta q_i(\xi_1)\zeta(x) e^{-\sigma t} \\
&+ D_i \delta q_i(\xi_1)\Delta \zeta(x) e^{-\sigma t}
+ 2 D_i \delta \nabla q_i(\xi_1)\cdot\nabla\zeta(x) e^{-\sigma t} \\
&+ F_i(V(x_1,z)) - F_i(w_1(t,x)).
\end{align*}

If $\xi_1\leq -C-m_0$, then by \eqref{psii} we have $z-m_*|x_1|\leq -C$.  
From \eqref{dpq}, it follows that $Q(\xi_1(t,x))=Q_1$.  
Using \eqref{pq} and \eqref{dll}, we obtain
\begin{align*}
(1-4\varepsilon_0)\cdot\mathbf1
\leq (1-\delta q_*)\cdot\mathbf1 - \delta Q_1\|\zeta\|_{L^\infty(\bar\Omega)}
\leq w_1(t,x)
\leq V(x_1,z)
\leq \mathbf1.
\end{align*}
By \eqref{muij}, there holds
\begin{align*}
F_i(V(x_1,z)) - F_i(w_1(t,x))
\leq \delta\sum_{j=1}^m \mu_{ij} q_j^1 \zeta(x) e^{-\sigma t}
\leq -\delta \varpi q_i^1 \zeta(x) e^{-\sigma t}.
\end{align*}
Since $V_z\ll\mathbf0$, by \eqref{D}, \eqref{zeta} and \eqref{si}, one has
\begin{align*}
\mathcal L_i(w_1)(t,x)
\leq \delta q_i^1 \zeta(x) e^{-\sigma t}
\left(\sigma + \bar D \left\|\frac{\Delta \zeta}{\zeta}\right\|_{L^\infty(\bar\Omega)} - \varpi\right)
\leq 0.
\end{align*}

If $\xi_1\geq C$, then by \eqref{psii} we have $z - m_*|x_1| \geq C + \psi(x_1) - m_*|x_1| \geq C$.  
Since $C>1$, it follows from \eqref{dpq} that $Q(\xi_1(t,x)) = Q_0$.  
By similar arguments, one has $\mathcal L_i(w_1)(t,x) \leq 0$.

If $-C-m_0\leq \xi_1 \leq C$, then $-C\leq z-m_*|x_1|\leq C$ by \eqref{psii}. 
From \eqref{pq} and \eqref{LA}, there holds
\begin{align*}
F_i (V(x_1,z) ) - F_i (w_1(t,x) )
\leq\delta\Lambda q_i(\xi_1) \zeta(x) e^{-\sigma t}
\leq\delta\Lambda q^*\|\zeta\|_{L^\infty(\bar\Omega)} e^{-\sigma t}.
\end{align*}
Similarly as in \eqref{naq} and \eqref{deq}, one has
$|\nabla q_i(\xi_1)|\le \frac{sM}{c}$
and
$|\Delta q_i(\xi_1)| \le \left(\frac{s^2}{c^2} + \|\psi''\|_{L^\infty(\mathbb R)}\right) M$.
By \eqref{pq}, \eqref{D}, \eqref{kapppp} and \eqref{rho}, there holds
\begin{align*}
\mathcal L_i(w_1)(t,x)
\leq{}& -\sigma \delta \rho \kappa e^{-\sigma t}
+ \sigma \delta q^*\|\zeta\|_{L^\infty(\bar\Omega)} e^{-\sigma t}
+s \delta M\|\zeta\|_{L^\infty(\bar\Omega)} e^{-\sigma t} \\
&+ \bar D \delta \left(\frac{s^2}{c^2} + \|\psi''\|_{L^\infty(\mathbb R)}\right) M\|\zeta\|_{L^\infty(\bar\Omega)} e^{-\sigma t}
+\delta\Lambda q^*\|\zeta\|_{L^\infty(\bar\Omega)} e^{-\sigma t} \\
&+ \bar D \delta q^*\|\Delta \zeta\|_{L^\infty(\bar\Omega)} e^{-\sigma t}
+ \frac{2}{c}\delta s\bar DM \|\nabla\zeta\|_{L^\infty(\bar\Omega)} e^{-\sigma t} \\
\leq{}&0.
\end{align*}
The proof is complete.
\end{proof}

An analogous argument to the proof of Lemma \ref{sub1} yields the following conclusion.

\begin{lem}\label{sup1}
Fix any $H\in\mathbb R$. For any small $\delta>0$, there exist $\sigma>0$, $\rho>0$ and $T>0$ such that
\[
\bar{ w}_1(t,x) = \min\{ w_1^+(t,x), \mathbf 1 \}
\]
is a supersolution of \eqref{1.1} for $t\ge 0$ and $x\in\bar\Omega$, where  $w_1^+ = (w_{1,1}^+,\dots,w_{1,m}^+)$ is defined by
\[
w_1^+(t,x) = V(x_1, z^+) + \delta Q(\xi_1^+(t,x)) \zeta(x) e^{-\sigma t}
\]
with
$
\xi_1^+(t,x) = z^+ - \psi(x_1)$
and 
$z^+ = x_2 - s(t-T) - \rho\delta(1-e^{-\sigma t}) + H$.
\end{lem}

Let $\gamma_1:=sm_*$ and
\begin{equation*}
  \varphi(\xi):=\frac{1}{m_*\gamma_1}\ln\left(1+\exp (\gamma_1\xi)\right).
\end{equation*}

\begin{lem}[\cite{NT1}]\label{tvar}
There exist some constants $K_i>0$ $(i=1,2,3)$ such that
\begin{align*}
&\max\left\{\left|\varphi(\xi)-\frac{\xi}{m_*}\right|, \left|\varphi'(\xi)-\frac{1}{m_*}\right|\right\} 
\le K_1 {\rm sech}(\gamma_1 \xi), &&  \xi \ge 0,\\
&\max\left\{\left|\varphi(\xi)\right|, \left|\varphi'(\xi)\right|\right\} 
\le K_1 {\rm sech}(\gamma_1 \xi), && \xi \le 0,\\
&\max\left\{\left|\varphi''(\xi)\right|, \left|\varphi'''(\xi)\right|\right\} 
\le K_1 {\rm sech}(\gamma_1 \xi), && \xi \in \mathbb R,\\
&c - \frac{s \varphi(\xi)}{\sqrt{1 + (\varphi'(\xi))^2}} 
\ge K_2 \min\{1, \exp(\gamma_1 \xi)\}, &&  \xi \in \mathbb R,\\
&0 \le \frac{s}{\sqrt{1 + (\varphi'(\xi))^2}} - c m_* 
\ge K_3 \min\{1, \exp(\gamma_1 \xi)\}, &&  \xi \in \mathbb R.
\end{align*}
\end{lem}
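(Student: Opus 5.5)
The plan is a direct computation, since $\varphi$ is explicit. Differentiating gives
\[
\varphi'(\xi)=\frac{1}{m_*}\,\frac{e^{\gamma_1\xi}}{1+e^{\gamma_1\xi}},\qquad
\varphi''(\xi)=\frac{\gamma_1}{m_*}\,\frac{e^{\gamma_1\xi}}{(1+e^{\gamma_1\xi})^2}=\frac{\gamma_1}{4m_*}\,{\rm sech}^2\Big(\frac{\gamma_1\xi}{2}\Big),
\]
and $\varphi'''$ is a further bounded multiple of ${\rm sech}^2(\gamma_1\xi/2)\tanh(\gamma_1\xi/2)$. The only elementary fact needed is that ${\rm sech}(\gamma_1\xi)$ is comparable to $e^{-\gamma_1|\xi|}$, since $e^{-\gamma_1|\xi|}\le{\rm sech}(\gamma_1\xi)\le 2e^{-\gamma_1|\xi|}$. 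Likewise ${\rm sech}^2(\gamma_1\xi/2)$ is comparable to $e^{-\gamma_1|\xi|}$ as well. So each estimate reduces to showing that the relevant quantity is $O(e^{-\gamma_1|\xi|})$ on the stated half-line, and the constant $K_1$ is then the maximum of finitely many explicit constants.

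For $\xi\le0$ I will use $\ln(1+y)\le y$ with $y=e^{\gamma_1\xi}\le1$. This gives $0\le\varphi(\xi)\le e^{\gamma_1\xi}/(m_*\gamma_1)$ and $0\le\varphi'(\xi)\le e^{\gamma_1\xi}/m_*$. For $\xi\ge0$ I will write
\[
\ln(1+e^{\gamma_1\xi})=\gamma_1\xi+\ln(1+e^{-\gamma_1\xi}).
\]
Hence $0\le\varphi(\xi)-\xi/m_*\le e^{-\gamma_1\xi}/(m_*\gamma_1)$. The identity $\frac{1}{m_*}-\varphi'(\xi)=\frac{1}{m_*(1+e^{\gamma_1\xi})}\le e^{-\gamma_1\xi}/m_*$ handles the derivative. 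The bounds for $\varphi''$ and $\varphi'''$ on all of $\mathbb R$ follow from the formulas above.

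For the last two inequalities, the key identity is $1+m_*^2=s^2/c^2$, which follows from \eqref{mstar}. Set $g(p)=sp/\sqrt{1+p^2}$ and $h(p)=s/\sqrt{1+p^2}$ for $p\in[0,1/m_*]$. Then $g(1/m_*)=s/\sqrt{1+m_*^2}=c$ and $h(1/m_*)=sm_*/\sqrt{1+m_*^2}=cm_*$. The function $g$ is increasing, with $g'$ bounded below by a positive constant on this compact interval. The function $h$ is decreasing, with $|h'|$ bounded below on $[\epsilon,1/m_*]$ and $h(0)-cm_*=s-cm_*>0$. Since $0<\varphi'<1/m_*$, the mean value theorem gives
\[
c-g(\varphi')\ \ge\ k\,(1/m_*-\varphi'),\qquad h(\varphi')-cm_*\ \ge\ k\,(1/m_*-\varphi')
\]
for some $k>0$, together with the nonnegativity $h(\varphi')-cm_*\ge0$. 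Finally, $1/m_*-\varphi'=\frac{1}{m_*(1+e^{\gamma_1\xi})}\ge\frac{1}{2m_*}\min\{1,e^{-\gamma_1\xi}\}$.

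The main obstacle I expect is reconciling this with the printed form of the last two inequalities. As $\xi\to+\infty$ both left-hand sides tend to $0$, so a lower bound by $K\min\{1,\exp(\gamma_1\xi)\}=K$ cannot hold there. In the fourth line the quantity must also be $s\varphi'$ rather than $s\varphi$, since $\varphi$ is unbounded. I will therefore read these two lines, as in \cite{NT1}, with $\varphi'$ in the numerator and with weight $\min\{1,\exp(-\gamma_1\xi)\}$, equivalently ${\rm sech}(\gamma_1\xi)$ for $\xi\ge0$. That is exactly what the mean value argument delivers, and it is the form needed when these estimates are used in the constructions that follow.
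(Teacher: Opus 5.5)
The paper gives no proof here; it imports the lemma from \cite{NT1}. Your explicit computation is a genuinely self-contained alternative, and it is correct. The first three estimates follow exactly as you describe, from the closed forms of $\varphi,\varphi',\varphi'',\varphi'''$ together with $e^{-\gamma_1|\xi|}\le{\rm sech}(\gamma_1\xi)\le 2e^{-\gamma_1|\xi|}$.

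Your diagnosis of the last two lines is also right, and it is the main thing your route buys over the citation. As printed, both lines are false:
\begin{itemize}
\item With $\varphi$ in the numerator, the fourth left-hand side tends to $-\infty$.
\item Even with $\varphi'$, both left-hand sides tend to $0$ as $\xi\to+\infty$, while the printed right-hand sides equal $K_2$ and $K_3$ there.
\end{itemize}
The reading you prove uses numerator $s\varphi'$ and weight $\min\{1,e^{-\gamma_1\xi}\}$. It is consistent with the first two lines, where $\varphi'\to 1/m_*$ at rate ${\rm sech}(\gamma_1\xi)$. It is also consistent with the ${\rm sech}(\gamma_1\alpha y)$ correction term in Lemma \ref{sub2}. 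The only explicit use of this lemma in the paper is the pair of trivial facts $\varphi>0$ and $|\varphi'|<\infty$ in Lemma \ref{sub4}, so the misprint is harmless there.

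Two small tidy-ups.

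First, in the estimate for $h$, on the range $\varphi'\in[0,\epsilon]$ you should use $h(\varphi')-cm_*\ge h(\epsilon)-cm_*>0$ (by monotonicity of $h$, with $\epsilon<1/m_*$), rather than $h(0)-cm_*$. A cleaner route: $p\mapsto (h(p)-cm_*)/(1/m_*-p)$ extends to a positive continuous function on $[0,1/m_*]$, with value $|h'(1/m_*)|>0$ at the right endpoint. Hence it has a positive minimum, which gives your constant $k$ directly.

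Second, the garbled ``$0\le\cdot\ge$'' in line five might have been meant as ``$0\le\cdot\le$''. The same mean value argument with $\sup|h'|\le s$ gives $h(\varphi')-cm_*\le \frac{s}{m_*}\min\{1,e^{-\gamma_1\xi}\}$, so your argument covers either reading.
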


It is worth noting that the planar traveling wave front in \cite{wangzhicheng2012} is increasing, so a suitable change of variables is needed to apply their results in the present work.
Based on \cite[Lemma 3.2]{wangzhicheng2012} and the comparison principle, the following result follows immediately.
\begin{lem}\label{sub2}
There exist positive constants $\varepsilon_0^-$ and $\alpha^-(\varepsilon)$ such that for 
$0<\varepsilon<\varepsilon_0^-$ and $0<\alpha<\alpha^-(\varepsilon)$, the function
$$
\underline v_2(y,z;\varepsilon,\alpha)=\max\{v_2(y,z;\varepsilon,\alpha),\mathbf0\}
$$
is a subsolution of \eqref{Veq} for $(y,z)\in\mathbb R^2$, where
\begin{equation*}
v_2(y,z;\varepsilon,\alpha)
  := \Phi\left(\frac{\varphi(\alpha y)-\alpha z}{\alpha\sqrt{1+\varphi'(\alpha y)^2}}\right)
  - \varepsilon Q\left(\frac{\varphi(\alpha y)-\alpha z}{\alpha\sqrt{1+\varphi'(\alpha y)^2}}\right)
  {\rm sech}(\gamma_1 \alpha y).
\end{equation*}
Moreover, for any fixed constant  $C>0$, there exists  $\kappa_0(C)>0$ such that
\begin{equation*}
 (v_2)_y(y,z;\varepsilon,\alpha)
  \ge \kappa_0\cdot\mathbf1
  \quad \text{ if } 
  -C \le \frac{\varphi(\alpha y)-\alpha z}{\alpha\sqrt{1+\varphi'(\alpha y)^2}} \le C.
\end{equation*}
\end{lem}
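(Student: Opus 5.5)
The plan is to reduce Lemma \ref{sub2} to the scalar-type construction of \cite[Lemma 3.2]{wangzhicheng2012}, after the change of variables that turns the increasing planar front used there into our decreasing profile $\Phi$. Write
\[
\Xi(y,z):=\frac{\varphi(\alpha y)-\alpha z}{\alpha\sqrt{1+\varphi'(\alpha y)^2}} .
\]
Its zero level set is the hyperbola-like curve $z=\varphi(\alpha y)/\alpha$. By Lemma \ref{tvar}, this curve is asymptotic to $z=|y|/m_*$ after rescaling, so after the reflection $z\mapsto -z$ (equivalently $\Phi(\xi)\leftrightarrow \Phi(-\xi)$) it matches the geometry of \cite{wangzhicheng2012}. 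We then compute
\[
\mathcal N_i(\underline v_2):=-D_i(v_2^i)_{yy}-D_i(v_2^i)_{zz}-s(v_2^i)_y-F_i(v_2)
\]
wherever $v_2^i>0$, and show it is $\le 0$. Since $\max$ of subsolutions is a subsolution (Definition \ref{dss}), this gives the first claim. The comparison principle is used only for the consequence stated afterwards. In the computation, $\Phi$-terms produce
\[
\Phi_i'(\Xi)\Bigl(c-s\,\partial_y\Xi\cdot(\text{normalisation})\Bigr)+D_i\Phi_i''(\Xi)\bigl(|\nabla\Xi|^2-1\bigr)+\text{curvature terms}.
\]
Lemma \ref{tvar} shows that $|\nabla \Xi|^2-1$ and $\Delta\Xi$ are $O(\alpha\,\mathrm{sech}(\gamma_1\alpha y))$. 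It also shows that the drift term has a definite sign of size $K_2\min\{1,e^{\gamma_1\alpha y}\}$, which, combined with $\Phi'\ll\mathbf 0$, gives a strictly negative contribution on the middle zone.

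We split into three regions, as in Lemmas \ref{l2.2} and \ref{sub1}: $\Xi\le -C$, $\Xi\ge C$, and $|\Xi|\le C$, with $C$ chosen so that $\Phi(\Xi)$ is within $\varepsilon_0$ of $\mathbf 1$ or $\mathbf 0$ outside the middle band.

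In the two outer regions, $Q(\Xi)$ is constant ($Q_1$ or $Q_0$). Using \eqref{muij}, the term $F_i(\Phi)-F_i(\Phi-\varepsilon Q\,\mathrm{sech})$ is bounded above by $-\varpi\varepsilon q_i\,\mathrm{sech}(\gamma_1\alpha y)$. This dominates three kinds of terms:
\begin{itemize}
\item the derivatives of $\mathrm{sech}(\gamma_1\alpha y)$, which are $O(\alpha)\,\mathrm{sech}$;
\item the $O(\alpha\,\mathrm{sech})$ errors from the $\Phi$-part, bounded via \eqref{1.a};
\item $\Phi''$ sign information from \eqref{impo1}.
\end{itemize}
This holds once $\alpha<\alpha^-(\varepsilon)$, where the threshold depends on $\varepsilon$.

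In the middle region, the negative drift term $-K_2\kappa\min\{1,e^{\gamma_1\alpha y}\}$ must absorb two things. First, the $\varepsilon$-perturbation terms, of order $\varepsilon(\Lambda q^*+M(1+s))\,\mathrm{sech}(\gamma_1\alpha y)$. Second, the $O(\alpha)$ terms. Here $\mathrm{sech}(\gamma_1\alpha y)\le 2\min\{1,e^{\gamma_1\alpha y}\}\cdot$const on the relevant side. This forces $\varepsilon<\varepsilon_0^-$ small, then $\alpha$ small.

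\textbf{Main obstacle.} The hardest part is the middle region for $y\to+\infty$ (in the reflected orientation). There $\min\{1,e^{\gamma_1\alpha y}\}$ and $\mathrm{sech}$ both decay, so the balance relies on the precise exponential-rate matching in Lemma \ref{tvar}. I would first try to copy the inequality chain of \cite[Lemma 3.2]{wangzhicheng2012} verbatim, replacing the scalar bistable structure with the Perron–Frobenius estimates \eqref{muij}.

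For the monotonicity claim, we differentiate:
\[
(v_2)_y=\Phi'(\Xi)\,\Xi_y-\varepsilon\bigl(Q'(\Xi)\Xi_y\,\mathrm{sech}+Q\,(\mathrm{sech})_y\bigr).
\]
On $|\Xi|\le C$ we have $-\Phi'\ge\kappa_1>0$. Also $\Xi_y$ has a definite sign, bounded away from zero in the matching orientation, by $\varphi'>0$ and Lemma \ref{tvar}. The $\varepsilon$-terms are bounded by $\varepsilon M(\cdot)+\varepsilon q^*\gamma_1\alpha$, so for $\varepsilon,\alpha$ small (shrinking $\varepsilon_0^-,\alpha^-$ depending on $C$) we obtain $(v_2)_y\ge\kappa_0(C)\mathbf 1$.
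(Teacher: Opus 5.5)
Your reduction is the same as the paper's. The paper gives no computation and simply invokes \cite[Lemma 3.2]{wangzhicheng2012} after $\xi\mapsto-\xi$, and your three-zone outline of the subsolution inequality is a fair description of that argument. Two inaccuracies in that part do no harm:
\begin{itemize}
\item $\varphi$ is not even, so the level curve $z=\varphi(\alpha y)/\alpha$ is one-sided. It flattens to $z=0$ as $y\to-\infty$ and has slope $1/m_*$ as $y\to+\infty$; it is not asymptotic to $z=|y|/m_*$. This is exactly why $v_3(y,z)=v_2(-y,z)$ and the maximum in Lemma \ref{sub4} are needed.
\item The drift gap $c-s\varphi'/\sqrt{1+\varphi'^2}$ vanishes as $\xi\to+\infty$, where $\varphi'\to 1/m_*$. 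So its lower bound is of the form $K\min\{1,e^{-\gamma_1\xi}\}$, not $\min\{1,e^{\gamma_1\xi}\}$. Since $\mathrm{sech}\,u\le 2e^{-|u|}$, your middle-zone absorption still works on both sides, so there is no real obstacle there.
\end{itemize}

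The genuine gap is the monotonicity claim. With $\Xi$ as printed,
$\Xi_y=\varphi'(\alpha y)\bigl(1+\varphi'(\alpha y)^2\bigr)^{-1/2}-\alpha\,\Xi\,\varphi'(\alpha y)\varphi''(\alpha y)\bigl(1+\varphi'(\alpha y)^2\bigr)^{-1}$.
Here $\varphi'(\xi)=\frac{1}{m_*}\frac{e^{\gamma_1\xi}}{1+e^{\gamma_1\xi}}\to 0$ as $\xi\to-\infty$. Along the band $|\Xi|\le C$ this gives two problems:
\begin{itemize}
\item As $y\to-\infty$, $\Xi_y\to 0$, so it is not bounded away from zero.
\item As $y\to+\infty$, $\Xi_y\to(1+m_*^2)^{-1/2}>0$, so $\Phi'(\Xi)\Xi_y<0$ there.
\end{itemize}
Hence $(v_2)_y$ tends to $0$ at one end of the band and to $\Phi'(\Xi)(1+m_*^2)^{-1/2}\ll\mathbf 0$ at the other. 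Your step ``$\Xi_y$ has a definite sign, bounded away from zero, by $\varphi'>0$'' is false, and no smallness of $\varepsilon,\alpha$ repairs it. Shrinking $\varepsilon_0^-,\alpha^-$ depending on $C$ would also break the quantifier order of the statement.

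The uniform bound does hold for the derivative in the variable that enters $\Xi$ linearly. There $\partial\Xi=-(1+\varphi'^2)^{-1/2}\le -m_*(1+m_*^2)^{-1/2}$, and the $\mathrm{sech}$ factor does not depend on that variable. Since $Q'\ge0$ by \eqref{pq},
$\partial v_2=\partial\Xi\,\bigl(\Phi'(\Xi)-\varepsilon Q'(\Xi)\,\mathrm{sech}(\cdot)\bigr)\ge\kappa\,m_*(1+m_*^2)^{-1/2}\,\mathbf 1$ on $|\Xi|\le C$, where $-\kappa=\max_i\sup_{|\xi|\le C}\Phi_i'$. This holds for every $\varepsilon$ and $\alpha$.

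The printed statement appears to have $y$ and $z$ interchanged inside $v_2$, for these reasons:
\begin{itemize}
\item $v_2$ is used with first argument $x_1$ and second argument $x_2-st$ (Lemma \ref{sub3} and $v^-$), and its monotonicity is what absorbs the $x_1$-shift.
\item $V(y,z)$ has $z$ as the propagation variable, so \eqref{Veq} should read $-sV_z$.
\end{itemize}
So the formula should involve $\varphi(\alpha z)$, $\mathrm{sech}(\gamma_1\alpha z)$ and $\alpha y$. With that reading, $(v_2)_y\ge\kappa_0\mathbf 1$ is exactly the computation above, and your subsolution computation carries over unchanged after the same relabeling.
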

 
Define 
\begin{equation*}
v_3(y,z;\varepsilon,\alpha) = v_2(-y,z;\varepsilon,\alpha).
\end{equation*}
Then $v_3(y,z;\varepsilon,\alpha)$ is also a subsolution of \eqref{Veq}. 
For simplicity, we shall denote $v_2(y,z;\varepsilon,\alpha)$ and $v_3(y,z;\varepsilon,\alpha)$  by $v_2(y,z)$ and $v_3(y,z)$ respectively in the following. 
Note that \eqref{muij} remains valid if we replace $\varepsilon_0$ by any $0 < \hat\varepsilon_0 < \varepsilon_0$. 
Therefore, by decreasing $\varepsilon_0$ if necessary and applying the comparison principle, 
the following result follows directly from \cite[Lemma 4.4]{wangzhicheng2012}.

\begin{lem}\label{sub3}
 For any small $\delta >0$ and $T \in \mathbb R$, 
there exist a  large constant $\rho>0$ and a  small  constant $\sigma>0$ such that
$$
\underline w_2(t,x)=\max\{w_2(t,x),\mathbf0\}\quad
\text{ and }\quad
\underline w_3(t,x)=\max\{w_3(t,x),\mathbf0\}
$$  
are subsolutions of \eqref{1.1} for $t \ge 0$ and $x\in\R^2$ (in the case that $\Omega=\R^2$) respectively,
where  $w_j(t,x)$ $(j=2,3)$ are defined by
\begin{align*}
w_2(t,x) &= v_2\left(x_1-\rho\delta(1-e^{-\beta t}), x_2-s(t+T)\right) - \delta Q(\xi_2) e^{-\sigma t},\\
w_3(t,x) &= v_3\left(x_1+\rho\delta(1-e^{-\beta t}), x_2-s(t+T)\right) - \delta Q(\xi_3) e^{-\sigma t}
\end{align*}
with
\begin{equation*}
\xi_2 = \frac{x_1-\rho\delta(1-e^{-\beta t}) + \varphi(\alpha \xi)/\alpha}{\sqrt{1 + (\varphi'(\alpha \xi))^2}}, \quad
\xi_3 = \frac{-x_1-\rho\delta(1-e^{-\beta t}) + \varphi(\alpha \xi)/\alpha}{\sqrt{1 + (\varphi'(\alpha \xi))^2}}.
\end{equation*}
\end{lem}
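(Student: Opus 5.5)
We derive Lemma \ref{sub3} directly from the time-independent subsolutions of Lemma \ref{sub2}, following \cite[Lemma 4.4]{wangzhicheng2012}. Work in the moving frame $(y,z)=(x_1,x_2-s(t+T))$. In this frame $u(t,x)$ solves \eqref{1.1} in $\R^2$ exactly when $v(t,y,z)$ solves
\[
v_t-Dv_{yy}-Dv_{zz}-sv_z-F(v)=\mathbf 0 .
\]
This is \eqref{Veq} with the roles of the coordinates adapted to the paper's orientation, after the change of variables mentioned before Lemma \ref{sub2}. We treat only $w_2$; the case of $w_3$ follows from the reflection $y\mapsto -y$, since \eqref{1.1} on $\R^2$ is invariant under it and $v_3(y,z)=v_2(-y,z)$.

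Write $\theta(t)=\rho\delta(1-e^{-\beta t})$. We compute
\[
\mathcal L_i(w_2)=(w_2^i)_t-D_i\Delta w_2^i-F_i(w_2)
\]
on the set where $w_2^i>0$. The derivative of the shifted profile contributes
\[
-\rho\delta\beta e^{-\beta t}(v_2^i)_y .
\]
By Lemma \ref{sub2}, $v_2$ is a stationary subsolution, so $-D_i\Delta v_2^i-s(v_2^i)_z-F_i(v_2)\le0$. What remains is
\[
F_i(v_2)-F_i(w_2)
\]
together with the terms coming from $-\delta Q(\xi_2)e^{-\sigma t}$: namely $\sigma\delta q_i e^{-\sigma t}$, the derivatives of $Q(\xi_2)$ in $t$ and $x$ (bounded by $M$ times derivatives of $\xi_2$), and $D_i\delta\Delta q_i(\xi_2)e^{-\sigma t}$. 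We take $\beta=\sigma$, since that is presumably intended.

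We then split into three regions according to the argument $\varsigma$ of $\Phi$ in $v_2$: $\varsigma\ge C$, $\varsigma\le -C$, and $|\varsigma|\le C$. Here $C$ is chosen so that $\Phi$ lies within $\delta q_*$ of $\mathbf 0$ or $\mathbf 1$ in the outer regions; we first decrease $\varepsilon_0$ and take $\varepsilon$ small so that $v_2$ and $w_2$ lie in $B_m(\mathbf0,4\varepsilon_0)$ or $B_m(\mathbf1,4\varepsilon_0)$ there.

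In the two outer regions, $Q(\xi_2)$ equals $Q_0$ or $Q_1$; this has to be checked by relating $\xi_2$ to $\varsigma$. Then \eqref{muij} gives
\[
F_i(v_2)-F_i(w_2)\le-\varpi\delta q_ie^{-\sigma t}.
\]
This absorbs $\sigma\delta q_ie^{-\sigma t}$ once $\sigma\le\varpi/2$. The term $-\rho\delta\sigma e^{-\sigma t}(v_2)_y$ has a good sign wherever $(v_2)_y\ge0$. Where $(v_2)_y$ may be negative, we bound it by the exponential tails of $\Phi'$ and of $\operatorname{sech}$, and absorb it by taking $\sigma$ smaller.

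In the middle region $|\varsigma|\le C$, Lemma \ref{sub2} gives $(v_2)_y\ge\kappa_0(C)\mathbf 1$. The shift term is then at most $-\rho\delta\sigma\kappa_0e^{-\sigma t}$. All other terms are bounded by $\delta e^{-\sigma t}$ times a constant $K$ built from $\Lambda q^*$, $\bar D M(1+\|\nabla\xi_2\|_\infty^2+\|\Delta\xi_2\|_\infty)$ and $\sigma q^*$. Choosing $\rho\ge K/(\sigma\kappa_0)$ makes $\mathcal L_i(w_2)\le0$. Finally, the maximum with $\mathbf 0$ preserves the subsolution property by Definition \ref{dss}, and the comparison principle of Lemma \ref{com} is not needed beyond this.

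The main obstacle is the outer regions. The derivatives of $\xi_2$ in $t$ and $x$ must be uniformly bounded, which follows from the bounds on $\varphi,\varphi',\varphi''$ in Lemma \ref{tvar}. In addition, the region where $Q(\xi_2)$ is not constant must be contained in a bounded $\varsigma$-strip, so that the estimate $(v_2)_y\ge\kappa_0$ from Lemma \ref{sub2} is available exactly where $Q'\neq0$. If the definition of $\xi_2$ does not align with $\varsigma$ in this way, we would replace $\xi_2$ by $\varsigma$ itself, which is the natural choice following \cite[Lemma 4.4]{wangzhicheng2012}.
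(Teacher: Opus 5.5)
Your route (moving frame, stationary subsolution $v_2$, split according to the argument of $\Phi$, \eqref{muij} in the outer regions, monotonicity in the shift variable on the middle strip, $\rho$ large) is the computation behind \cite[Lemma 4.4]{wangzhicheng2012}. The paper simply cites that lemma here. One step fails as written, however: the outer regions.

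You plan to absorb a possibly negative $(v_2^i)_y$ in the shift term ``by taking $\sigma$ smaller''. But you afterwards fix $\rho\ge K/(\sigma\kappa_0)$, so $\rho\sigma\ge K/\kappa_0$, and $K$ contains the $\sigma$-independent term $\Lambda q^*$. In an outer region the bad contribution is $\rho\sigma\,\delta e^{-\sigma t}(v_2^i)_y^-$, and the only good one is $(\varpi-\sigma)\delta q_i e^{-\sigma t}$. Their ratio is of order $K(v_2^i)_y^-/(\kappa_0\varpi q_*)$, on which $\sigma$ has no effect.

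Tails do not rescue this. Enlarging $C$ shrinks $\Phi'$ on the outer regions but also shrinks $\kappa_0(C)$. Moreover, with Lemma \ref{sub2} read literally (weight ${\rm sech}(\gamma_1\alpha y)$ in the shift variable), $(v_2^i)_y$ contains $\varepsilon q_i\gamma_1\alpha\,{\rm sech}(\gamma_1\alpha y)\tanh(\gamma_1\alpha y)$. This term changes sign and has size $\varepsilon\gamma_1\alpha$ throughout the outer regions, with no decay in $\varsigma$. A related slip: $\partial_t Q(\xi_2)$ contains a piece proportional to $\rho$ (through the shift), so it cannot be put into a $\rho$-independent $K$.

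The missing point is that nothing has to be absorbed. Your fallback ``replace $\xi_2$ by $\varsigma$'' is right, provided $\varsigma$ has the form of the statement's $\xi_2$. That is, it is affine in the shift variable $y$ with slope $(1+\varphi'^2)^{-1/2}\in[(1+m_*^{-2})^{-1/2},1]$, and $\varphi,\varphi'$ and the ${\rm sech}$ weight are evaluated at the propagation variable. Then $w_2^i=\Phi_i(\xi)-\lambda q_i(\xi)$ with $\lambda=\varepsilon\,{\rm sech}(\cdot)+\delta e^{-\sigma t}\ge0$. The terms of $\mathcal L_i(w_2)$ carrying the factor $\rho$ combine into $-\rho\delta\sigma e^{-\sigma t}\,\xi_y\big(\Phi_i'(\xi)-\lambda q_i'(\xi)\big)$.

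Because $\Phi_i'<0\le q_i'$ by \eqref{p} and \eqref{pq}, this expression has a single sign on all of $\R^2$ (the one for which Lemma \ref{sub2} gives $\kappa_0$ on the strip). Its modulus is at least $\rho\delta\sigma e^{-\sigma t}|\xi_y||\Phi_i'(\xi)|$. So it is $\le0$ on the outer regions, where only $\sigma\le\varpi/2$ is then needed, and $\le-\rho\delta\sigma\kappa_0e^{-\sigma t}$ on the strip. After that, your choice of $\rho$ finishes the proof. What makes the outer regions work is this monotonicity of $v_2$ in the shift direction, not the size of $\sigma$.
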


On the basis of  $w_1$, $w_2$ and $ w_3$, we construct a subsolution of \eqref{1.1} which plays a key role in the proof of Theorem \ref{t2}.

\begin{lem}\label{sub4}
For any small $\delta>0$ and any $H>0$, there exist constants $T>0$, $\rho>0$ and $\sigma>0$ such that
\begin{equation*}
\widetilde{ w}^-(t,x) := \max\left\{ w^-(t,x), \mathbf 0\right\}
\end{equation*}
is a subsolution of \eqref{1.1} for $t \ge 0$ and $x \in \bar\Omega$, where
\begin{equation*}
w^-(t,x) = \max\left\{ w_1(t,x),  w_2(t,x),  w_3(t,x)\right\}.
\end{equation*}
\end{lem}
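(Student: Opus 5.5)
The plan is to combine the three subsolutions through a localization argument. Since the maximum of subsolutions is a subsolution (Definition \ref{dss}), the only real issue is the boundary: $\underline w_1$ is a subsolution in $\overline\Omega$ by Lemma \ref{sub1}, while $\underline w_2,\underline w_3$ are only known to be subsolutions in $\R^2$ (Lemma \ref{sub3}) and carry no Neumann condition on $\partial K$. I would show that near $\partial\Omega$, and more generally on a fixed neighbourhood $\mathcal N$ of $K$, the maximum is attained by $w_1$ for all $t\ge 0$. Then $w^-$ coincides with $\max\{w_1,\,\cdot\,\}$, where the second entry is irrelevant near the boundary. In $\Omega\setminus\mathcal N$ the function is a local maximum of classical subsolutions of the interior inequality, hence a subsolution there. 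Near $\partial\Omega$ it equals $w_1$ (up to the truncation by $\mathbf0$), whose boundary inequality $\nu\cdot\nabla w_1\le\mathbf0$ was verified in Lemma \ref{sub1}.

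I would fix $\delta$ and $H$ and choose the same $\sigma,\rho$ for all three functions, taking $\sigma$ as the minimum and $\rho$ as the maximum of the values required in Lemmas \ref{sub1} and \ref{sub3}. The inequalities \eqref{rho} and those of \cite{wangzhicheng2012} persist when $\sigma$ decreases and $\rho$ increases, provided the $\kappa$-type constants are kept fixed. I would also pick the same time shift $T$ in $w_2,w_3$ as in $w_1$, possibly adjusted by $H$, and then take $T$ large.

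The key step is the comparison on $\mathcal N=\{|x|\le L+1\}$. For $x\in\mathcal N$ and $t\ge0$, one has $x_2-s(t+T)\to-\infty$ uniformly as $T\to\infty$, so $z-m_*|x_1|\le -C$ and $\xi_1\le -C-m_0$. Hence $w_1\ge \mathbf 1-\delta q_*\mathbf1-\delta Q_1\|\zeta\|_\infty e^{-\sigma t}$. I then need $w_2,w_3$ to be strictly smaller there, and this is the main obstacle, because near the vertex all three functions are close to $\mathbf1$ and the comparison is at the $O(\delta)$ level. I would handle it by exploiting the perturbation sizes. In $w_2$, the term $\delta Q(\xi_2)e^{-\sigma t}$ has $\xi_2$ large negative deep inside the region $\{\approx\mathbf1\}$, so it equals $\delta Q_1 e^{-\sigma t}$. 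Moreover $\Phi(\cdot)\le \mathbf 1- a'e^{b(\cdot)}$ stays below $\mathbf1$ only by an exponentially small amount, while $w_1$ loses $\delta Q_1\zeta e^{-\sigma t}$ with $\zeta>1$.

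Since both deficits are comparable, I would instead modify the construction by using $H$. Taking $H$ large shifts $w_1$ upward, i.e. its front lies further ahead, so that $V(x_1,z)\ge \mathbf1-\delta' $ with $\delta'$ tiny. I would then compare with the $\varepsilon Q\,{\rm sech}$ term of $v_2$, which at bounded $y$ is of fixed size $\varepsilon q_*/2$. Choosing $\delta\|\zeta\|_\infty$ smaller than $\varepsilon q_*/2$ gives $w_1\gg w_2,w_3$ on $\mathcal N$ for all $t\ge0$, since the drift $\rho\delta(1-e^{-\beta t})$ in $x_1$ is bounded. If this scale matching fails, the fallback is to note that on $\mathcal N$ the functions $w_2,w_3\le \mathbf1-\varepsilon q_*{\rm sech}(\gamma_1\alpha(L+2+\rho))\mathbf1/2$. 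I would then simply require $\delta$ small relative to this fixed quantity, which is allowed because $\varepsilon,\alpha$ are fixed before $\delta$ in Lemma \ref{sub2}.

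Once $w_1\gg\max\{w_2,w_3\}$ on $\mathcal N$ is established, the boundary condition and the interior inequality near $K$ follow from Lemma \ref{sub1}. Away from $K$, the interior inequality follows from Lemma \ref{sub3} and the max property, which completes the proof.
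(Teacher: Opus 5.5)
Your proposal is correct and follows the paper's proof. The paper likewise shows that $w_2,w_3\le(1-\pi)\mathbf 1$ on $\partial\Omega$ uniformly in $T$ and $t\ge 0$, and that $w_1\ge(1-\pi)\mathbf 1$ there once $T$ is large and $\delta$ is small; your fallback is exactly the mechanism behind the paper's $\pi$, made more explicit. That fallback uses the deficit $\varepsilon q_*\,{\rm sech}(\gamma_1\alpha(L+1+\rho\delta))$, with $\varepsilon,\alpha$ fixed before $\delta$, and gives strict domination of $w_1$ on a neighbourhood of $K$.

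Drop the detour through $H$, though. $H$ is prescribed in the statement, and enlarging it pushes $V(x_1,z)$ away from $\mathbf 1$ because $V_z\ll\mathbf 0$. The closeness of $w_1$ to $\mathbf 1-\delta Q_1\zeta e^{-\sigma t}$ near $K$ has to come from $T\to\infty$, as you had already observed.
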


\begin{proof}
Since the conclusion holds trivially when $\widetilde{w}^-(t,x) = \mathbf 0$, we only need to consider the case $\widetilde{w}^-(t,x) = w^-(t,x) $.  
By Lemmas \ref{sub1} and \ref{sub3}, it follows that 
\[
(w_i)_t - D \Delta w_i - F(w_i) \le \mathbf 0, \quad i = 1, 2, 3.
\]
Thus, it remains to verify the boundary condition for $w^-$.  
We claim that  
\begin{equation*}
w^-(t,x) = w_1(t,x) \quad \text{for all } x \in \partial \Omega \text{ and } t \ge 0.
\end{equation*}
Once this is shown, we immediately obtain $\nabla w^-(t,x) \cdot \nu \le 0$ for $t \ge 0$ and $x \in \partial \Omega$ from the proof of Lemma \ref{sub1}.
By Lemma \ref{tvar}, $\varphi(\xi) > 0$ and $|\varphi'(\xi)| < \infty$ for $\xi \in \mathbb R$.  
Then, by \eqref{ck} and the definition of $w_2$, there exists $0 < \pi < 1$ such that
\begin{equation*}
w_2(t,x) \le (1 - \pi) \cdot \mathbf 1
\quad \text{for any } T \in \mathbb R,\ t \ge 0,\ x \in \partial \Omega.
\end{equation*}
Similarly, $w_3(t,x) \le (1 - \pi) \cdot \mathbf 1$ for any $T \in \mathbb R$, $t \ge 0$ and $x \in \partial \Omega$.  
On the other hand, since 
\[
x_2 - s(t + T) + \rho \delta (1 - e^{-\sigma t}) + H \to -\infty
\quad \text{for } t \ge 0 \text{ and } x \in \partial \Omega \text{ as } T \to \infty,
\]
it follows from the properties of $V$ that
\begin{equation*}
w_1(t,x) \ge \left(1 - \delta \|\zeta\|_{L^{\infty}(\overline{\Omega})}\right) \cdot \mathbf 1
\quad \text{for } t \ge 0 \text{ and } x \in \partial \Omega \text{ as } T \to \infty.
\end{equation*}
Therefore, for sufficiently small $\delta > 0$, there exists a large $T > 0$ such that
\begin{equation*}
w_1(t,x) \ge (1 - \pi) \cdot \mathbf 1 \ge \max\{w_2(t,x),\ w_3(t,x),\mathbf0\}
\quad \text{for } t \ge 0 \text{ and } x \in \partial \Omega.
\end{equation*}  
The proof is complete.
\end{proof}

The following lemma follows from \cite[Lemma 3.1]{wangzhicheng2012} and the comparison principle.
\begin{lem} \label{super1}
There exist constants $\gamma_2 > 0$, $0 < \varepsilon_0^+ < 1$ and  $\alpha^+(\varepsilon)>0$ such that for 
$0 < \varepsilon < \varepsilon_0^+$ and $0 < \alpha < \alpha^+(\varepsilon)$, the function 
$$
\bar v(y,z;\varepsilon,\alpha)=\min\{v_4(y,z;\varepsilon,\alpha),\mathbf1\}
$$
is a supersolution of \eqref{Veq} for all $(y,z) \in \mathbb R^2$, where
\begin{equation*}
v_4(y,z;\varepsilon,\alpha)
= \Phi\left(\frac{\alpha z - \psi(\alpha y)}{\alpha\sqrt{1 + \psi'(\alpha y)^2}}\right)
+ \varepsilon P\left(\frac{\alpha z - \psi(\alpha y)}{\alpha\sqrt{1 + \psi'(\alpha y)^2}}\right)
{\rm sech}(\gamma_2 \alpha y).
\end{equation*}
 Moreover, $(v_4)_z(y,z;\varepsilon,\alpha) \ll \mathbf0$ for all $(y,z) \in \mathbb R^2$.  
Hence, for any constant $C > 0$, there exists $\kappa_1 = \kappa_1(C) > 0$ such that 
\[
(v_4)_z(y,z;\varepsilon,\alpha) \le- \kappa_1 \cdot \mathbf1
\quad \text{if} \quad
-C \le \frac{\alpha z - \psi(\alpha y)}{\alpha\sqrt{1 + \psi'(\alpha y)^2}} \le C.
\]
\end{lem}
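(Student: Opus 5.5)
This follows from \cite[Lemma 3.1]{wangzhicheng2012} combined with the comparison principle, in the same way that Lemma \ref{sub2} was obtained from \cite[Lemma 3.2]{wangzhicheng2012}. Because the planar front there is increasing, the first step is the change of variables $\xi\mapsto-\xi$. This turns Wang's supersolution into the form $v_4$ above: the argument of $\Phi$ becomes $(\alpha z-\psi(\alpha y))/(\alpha\sqrt{1+\psi'(\alpha y)^2})$, and the corrector is $\varepsilon P(\cdot)\,{\rm sech}(\gamma_2\alpha y)$. The choice of $P$ here, rather than $Q$, matches the ordering $P_0=\eta_0R_0\ll R_1=P_1$. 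Under this reflection the Perron--Frobenius vectors $R_0,R_1$ of (A3) exchange their roles at $\xi=\pm\infty$ exactly as the cutoff $\chi$ does.

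Next I would verify $v_4$ directly in the three usual regions for $\xi:=(\alpha z-\psi(\alpha y))/(\alpha\sqrt{1+\psi'(\alpha y)^2})$.

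\emph{Large $\xi$, near $\mathbf0$.} Here $P=P_0$ and $\Phi$ is close to $\mathbf0$. By \eqref{muij}, $F(\Phi)-F(v_4)\ge \varepsilon\varpi P_0\,{\rm sech}(\gamma_2\alpha y)$. This term dominates the contributions from differentiating ${\rm sech}(\gamma_2\alpha y)$, which are $O(\alpha)$, once $\alpha<\alpha^+(\varepsilon)$ and $\gamma_2$ are small.

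\emph{Very negative $\xi$, near $\mathbf1$.} The same argument applies with $P_1$ and $\mu^1_{ij}$.

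\emph{Middle range $|\xi|\le C$.} The negative term $-(\Phi)'$ multiplied by the curvature defect must absorb everything else. Since $\psi$ solves \eqref{2C.8}, the leading terms reproduce the profile equation $D\Phi''+c\Phi'+F(\Phi)=0$ up to a remainder, and by Lemma \ref{tvar}-type estimates for $\psi$ that remainder has a sign and size proportional to ${\rm sech}(\gamma_2\alpha y)$. A sufficiently small $\varepsilon$ then controls the $\varepsilon\Lambda p^*$ term coming from $F$.

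Finally, $\min\{v_4,\mathbf1\}$ is a supersolution because the minimum of two supersolutions is again a supersolution (Definition \ref{dss}), and $\mathbf1$ is an equilibrium.

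For the monotonicity claim, I would differentiate in $z$:
\[
(v_4)_z=\frac{1}{\sqrt{1+\psi'(\alpha y)^2}}\Big(\Phi'(\xi)+\varepsilon P'(\xi)\,{\rm sech}(\gamma_2\alpha y)\Big).
\]
Since $\Phi'\ll\mathbf0$ and $P'\le\mathbf0$ by \eqref{pq}, this gives $(v_4)_z\ll\mathbf0$ everywhere. On the set $|\xi|\le C$ we have $\Phi_i'(\xi)\le-\min_i\min_{|\xi|\le C}|\Phi_i'|<0$, and $\sqrt{1+\psi'^2}\le\sqrt{1+m_*^2}=s/c$ by \eqref{psii}. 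Therefore $\kappa_1(C)=\tfrac{c}{s}\min_{i}\min_{|\xi|\le C}|\Phi_i'(\xi)|$ works, uniformly in $\varepsilon$ and $\alpha$.

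The main obstacle is the middle-region estimate, namely checking that the curvature gain from \eqref{2C.8} really dominates. I would not redo it here; instead I would quote the computation of \cite[Lemma 3.1]{wangzhicheng2012} after the sign change, noting only that its hypothesis (H4) is unnecessary because the truncation by $\mathbf1$ keeps the function inside $[\mathbf0,\mathbf1]$.
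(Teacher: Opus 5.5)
Your proposal follows the same route as the paper, which obtains this lemma only by quoting \cite[Lemma 3.1]{wangzhicheng2012} (after reversing the orientation of Wang's increasing planar front) together with the comparison principle. Your direct computation of $(v_4)_z=(1+\psi'(\alpha y)^2)^{-1/2}\big(\Phi'(\xi)+\varepsilon P'(\xi)\,{\rm sech}(\gamma_2\alpha y)\big)$, giving $\kappa_1(C)=\frac{c}{s}\min_i\min_{|\xi|\le C}|\Phi_i'(\xi)|$, is correct, and the paper does not spell it out.

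One caveat in your heuristic sketch: $\gamma_2$ should not be taken small. In the middle region the gain is $-\Phi'(\xi)\,\psi''(\alpha y)\,(1+\psi'(\alpha y)^2)^{-3/2}$ by \eqref{2C.8}, and $\psi''$ decays like ${\rm sech}(sm_*\,\cdot)$. So this gain can absorb $\varepsilon\Lambda p^*\,{\rm sech}(\gamma_2\alpha y)$ for all $y$ only if $\gamma_2\ge sm_*$. In the far regions $P'=0$ and the ${\rm sech}$-derivative terms are of relative size $O(\gamma_2^2\alpha^2)$, so only $\alpha$ needs to be small there. Since you defer to Wang's computation for the middle region, this does not invalidate your argument.
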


We now proceed to construct a supersolution of \eqref{1.1} by employing $v^+$.
For simplicity, we   denote $v_4(y,z;\varepsilon,\alpha)$   by $v_4(y,z)$.

\begin{lem}\label{super2}
For any small $\delta > 0$, there exist constants $\rho > 0$, $\sigma > 0$ and $T > 0$ such that
\begin{equation*}
\widetilde{w}^+(t,x) := \min\left\{ w^+(t,x), \mathbf1 \right\}
\end{equation*}
is a supersolution of \eqref{1.1} for all $t \ge 0$ and $x \in \overline{\Omega}$, where 
\[
w^+(t,x)
= v_4(x_1, z)
+ \delta P(\xi(t,x)) e^{-\sigma t}
\]
with
\[
z = x_2 - s(t + T) - \rho \delta (1 - e^{-\sigma t}),
\qquad
\xi(t,x) = \frac{\alpha z - \psi(\alpha x_1)}{\alpha \sqrt{1 + \psi'(\alpha x_1)^2}}.
\]
\end{lem}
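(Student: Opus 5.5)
The plan is to verify separately the interior differential inequality and the Neumann boundary inequality for $w^+$, at every point where $\widetilde w^+_i<1$, in the same style as the proofs of Lemmas \ref{l2.2} and \ref{sub1}. Since Lemma \ref{super1} is stated for \eqref{Veq} in the moving frame, the first step is to convert it. Put $y=x_1$ and $z=x_2-s(t+T)-\rho\delta(1-e^{-\sigma t})$. Then $v_4(x_1,z)$ satisfies
\[
\partial_t v_4 - D\Delta v_4 - F(v_4) = -\rho\delta\sigma e^{-\sigma t}(v_4)_z + \big(-D\Delta_{y,z}v_4 - s\,\partial_z v_4 - F(v_4)\big).
\]
By Lemma \ref{super1}, the bracketed term is $\ge \mathbf 0$ wherever $v_4\ll\mathbf1$. (The sign of the drift term must be matched to the orientation of the moving frame.) The first term is $\ge \mathbf 0$ because $(v_4)_z\ll\mathbf0$. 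Moreover, on the band $-C\le\xi\le C$ it is at least $\rho\delta\sigma\kappa_1 e^{-\sigma t}$, with $\kappa_1=\kappa_1(C)$ taken from Lemma \ref{super1}.

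Next I compute $\mathcal L_i(w^+)$. Besides the two terms above, it contains:
\begin{itemize}
\item $-\sigma\delta p_i(\xi)e^{-\sigma t}$;
\item the terms coming from $\partial_t\xi$ and from $\Delta p_i(\xi)$, which are bounded by a constant times $\delta M e^{-\sigma t}$ using $|\psi'|\le m_*$, $\psi''$ bounded and $\alpha\le1$, exactly as in \eqref{naq}--\eqref{deq};
\item $F_i(v_4)-F_i(w^+)$.
\end{itemize}
I split into three regions according to $\xi$.

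\emph{Region $\xi\ge C$.} Here $P=P_0$, so the derivatives of $P$ vanish. Choosing $C$ large (depending on $\varepsilon,\delta$) makes $v_4$ close to $\mathbf0$, and hence $w^+\in B_m(\mathbf0,4\varepsilon_0)$. Then \eqref{muij} gives $F_i(v_4)-F_i(w^+)\ge\varpi\delta p_i^0e^{-\sigma t}$, which absorbs $-\sigma\delta p_i$ provided $\sigma\le\varpi/2$. The region $\xi\le -C$ is handled the same way near $\mathbf 1$, with $P=P_1$. A subtlety is that $v_4$ contains the extra term $\varepsilon P\,{\rm sech}$, so the closeness to $\mathbf0$ or $\mathbf1$ needs $\varepsilon$ small relative to $\varepsilon_0$. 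This is allowed, since $\varepsilon<\varepsilon_0^+$ can be shrunk.

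\emph{Middle band $-C\le\xi\le C$.} The positive term $\rho\delta\sigma\kappa_1e^{-\sigma t}$ dominates $\delta e^{-\sigma t}(\sigma p^*+\Lambda p^*+c_0 M)$ once $\rho$ is chosen large, in analogy with \eqref{rho}.

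\emph{Boundary condition.} This is the main obstacle, because $w^+$ contains no $\zeta$-correction. I expect to handle it by choosing $T$ large, which pushes $\partial\Omega$ deep into the region where $\xi\ll -C$. There $v_4$ is close to $\mathbf1$; one may even reach $\min\{w^+,\mathbf1\}=\mathbf1$ on $\partial\Omega$ for all $t\ge0$, since $w^+\ge\Phi(\xi)$ and I would use $\varepsilon P{\rm sech}$ together with $\delta P_1e^{-\sigma t}$. If that is not sufficient, I would fall back on the gradient bound: for $\xi\le -1$ the vector $P$ is constant, and $|\nabla\Phi(\xi)|$ is exponentially small in $|\xi|$ by \eqref{1.a}, while $\nabla({\rm sech}(\gamma_2\alpha x_1))$ is $O(\alpha)$ on the compact set $\partial\Omega$. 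Then I would add a term $\delta P\zeta e^{-\sigma t}$ as in Lemma \ref{sup1} to obtain $\nu\cdot\nabla w^+\ge0$. The key point is that $\xi\to-\infty$ uniformly on $\partial\Omega$ as $T\to\infty$, because $z\le C_K'-sT$ there and $\psi(\alpha x_1)$ is bounded on $\partial\Omega$. Making this uniform in $t\ge0$ is easy, since $z$ only decreases in $t$.
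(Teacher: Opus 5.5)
Your proof is correct. For the boundary condition it is the paper's argument: taking $T$ large makes $\xi\to-\infty$ uniformly on $\partial\Omega$ for $t\ge0$, so $w^+\ge\mathbf 1$ and $\widetilde w^+\equiv\mathbf 1$ there, and the Neumann condition is trivial. The uniformity in $t\ge0$ comes from the non-decaying term $\varepsilon P(\xi)\,{\rm sech}(\gamma_2\alpha x_1)\ge\varepsilon p_*\,{\rm sech}(\gamma_2\alpha L)\cdot\mathbf 1$ on the compact set $\partial\Omega$, where $|x_1|<L$. So it suffices that $\Phi(\xi)\ge(1-\varepsilon p_*\,{\rm sech}(\gamma_2\alpha L))\cdot\mathbf 1$, and the decaying term $\delta P_1e^{-\sigma t}$ is not needed. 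You should commit to this route. The fallback of inserting $\delta P\zeta e^{-\sigma t}$ is never needed, and it would change $w^+$, so it would not prove the lemma as stated.

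The real difference is the interior inequality. The paper does no computation there and simply imports it from \cite[Lemma 4.3]{wangzhicheng2012}, where the time-dependent whole-space supersolution is built. You instead rebuild it from the stationary Lemma \ref{super1} by the three-region argument of Lemmas \ref{l2.2} and \ref{sub1}. This makes the perturbation step self-contained, with only the stationary supersolution borrowed. Your moving-frame identity with $-s\partial_z v_4$ is the right one; the $sV_y$ in \eqref{Veq} is a misprint.

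Two details need tightening.
\begin{enumerate}
\item The quantity $\partial_t\xi=z_t/\sqrt{1+\psi'(\alpha x_1)^2}$ contains $\rho$. So that term is not bounded by a $\rho$-independent multiple of $\delta Me^{-\sigma t}$. However, $z_t<0$ and $p_i'\le0$, so $\delta p_i'(\xi)\xi_te^{-\sigma t}\ge0$ can simply be discarded, and the choice of $\rho$ is not circular.
\item For $\xi\le -C$, the $\varepsilon P\,{\rm sech}$ term can push some components of $v_4$ above $1$, so ``$v_4\ll\mathbf 1$'' may fail. Use Lemma \ref{super1} in its truncated form instead, and compare $F_i(\min\{v_4,\mathbf 1\})$ with $F_i(\widetilde w^+)$ at points where $w_i^+<1$. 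The $i$-th component of $\widetilde w^+-\min\{v_4,\mathbf 1\}$ equals $\delta p_i^1e^{-\sigma t}$. The other components lie in $[0,\delta p_j^1e^{-\sigma t}]$, and $\mu_{ij}^1>0$ for $i\ne j$. Hence \eqref{muij} still gives $F_i(\min\{v_4,\mathbf 1\})-F_i(\widetilde w^+)\ge\varpi\delta p_i^1e^{-\sigma t}$, and all mean-value segments stay inside $[\mathbf 0,\mathbf 1]$.
\end{enumerate}
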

\begin{proof}
Since the conclusion is trivially true when $\widetilde{ w}^+ = \mathbf1$, we only need to consider the case where $\widetilde{ w}^+(t,x) =  w^+(t,x) $.  
In this case, it follows from \cite[Lemma 4.3]{wangzhicheng2012} that
$w^+_t- D \Delta w^+
- F(w^+)
\ge \mathbf0$.
In order to verify the boundary condition, we shall show that $ \widetilde w^+(t,x) = \mathbf1$ for all $t \ge 0$ and $x\in \partial \Omega$, namely,
\begin{equation}\label{pw}
w^+(t,x)
=  v_4(x_1,z)
+ \delta  P(\xi(t,x)) e^{-\sigma t}
\ge \mathbf1
\quad \text{ for all } t \ge 0 \text{ and }x\in \partial \Omega.
\end{equation}
By \eqref{ck}, one has $\varepsilon\ {\rm sech}(\gamma_2 \alpha x_1) > 0$ for all $x\in \partial \Omega$.  
Since
\[
\frac{\alpha z - \psi(\alpha x_1)}{\alpha \sqrt{1 + \psi'(\alpha x_1)^2}}
\to -\infty
\quad \text{ as } z \to -\infty
\]
and $\Phi(-\infty) = \mathbf1$, there exists a sufficiently large  $T > 0$ such that \eqref{pw} holds.  
This completes the proof.
\end{proof}
For any fixed $H \in \mathbb R$, let $v_1(y,z) :=  V(y, z + H)$ for $(y, z) \in \mathbb R^2$.  
Then $w_1$ can be expressed as
\begin{equation*}
w_1(t,x)
=  v_1\left(x_1, x_2 - s(t + T) + \rho \delta (1 - e^{-\sigma t})\right)
- \delta  Q(\xi_1(t,x)) \zeta(x) e^{-\sigma t}.
\end{equation*}
Define
\begin{equation*}
v^-(t,x)
= \max\left\{
v_1(x_1, x_2 - st),~
v_2(x_1, x_2 - st),~
v_3(x_1, x_2 - st)
\right\},
\end{equation*}
and
\begin{equation*}
v^+(t,x) = v_4(x_1, x_2 - st).
\end{equation*}

Applying arguments similar to those in \cite[Lemmas 3.1 and 4.10]{wangzhicheng2012}, we readily obtain the following result.

\begin{lem}\label{large1}
It holds that
\begin{equation*}
\lim\limits_{R \to \infty}
\sup\limits_{x_1^2 + (x_2 - st)^2 > R^2}
\left|
v^{\pm}(t,x)
- \Phi\left(\frac{c}{s}(x_2 - st - m_* |x_1|)\right)
\right|
\le \varepsilon q_*
\quad \text{for any } t \ge 0,
\end{equation*}
where $\varepsilon$ is a sufficiently small constant such that Lemmas \ref{sub2} and \ref{super1} hold.  
In particular, it follows from \eqref{1.b} that
\begin{equation*}
\lim\limits_{R \to \infty}
\sup\limits_{x_1^2 + (x_2 - st)^2 > R^2}
\left|
v^{\pm}(t,x)
- V(x_1, x_2 - st)
\right|
\le \varepsilon q_*
\quad \text{for any } t \ge 0.
\end{equation*}
\end{lem}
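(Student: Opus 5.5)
Since $v^\pm(t,x)$ depend on $(t,x)$ only through $(y,z)=(x_1,x_2-st)$, the statement is a purely stationary one: it suffices to show
\[
\limsup_{R\to\infty}\sup_{y^2+z^2>R^2}\Big|v^\pm(y,z)-\Phi\Big(\tfrac{c}{s}(z-m_*|y|)\Big)\Big|\le \varepsilon q_*,
\]
where $v^+(y,z)=v_4(y,z)$ and $v^-=\max\{v_1,v_2,v_3\}$. The second assertion then follows from the first by \eqref{1.b} and the triangle inequality, because $V$ is within $o(1)$ of $\Phi(\frac cs(z-m_*|y|))$ outside large balls. I will treat $v^+$ and $v^-$ separately, in each case splitting the far field into a region near the asymptotic lines $z=m_*|y|$ and regions far from them.

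For $v^+=v_4$, write $\xi_4=\frac{\alpha z-\psi(\alpha y)}{\alpha\sqrt{1+\psi'(\alpha y)^2}}$. The perturbation term satisfies $\varepsilon P(\xi_4)\,{\rm sech}(\gamma_2\alpha y)\le \varepsilon p^*\,{\rm sech}(\gamma_2\alpha y)$. It tends to $0$ as $|y|\to\infty$ and is bounded by $\varepsilon p^*$ in general. Here I expect the constant to need care: one should rescale $P$, or note that only the region where $\Phi(\xi_4)$ is not already close to $\mathbf 0$ or $\mathbf 1$ matters. In any case, up to adjusting the constant to match the $\varepsilon q_*$ of the statement, the bound is $O(\varepsilon)$.

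For the main term, take $|y|\to\infty$ with $z$ arbitrary. By the convexity and asymptotics of $\psi$ from \eqref{psii} and \eqref{2C.8} (namely $\psi'(r)\to\pm m_*$ as $r\to\pm\infty$, and $\psi(r)-m_*|r|$ bounded), we get $\sqrt{1+\psi'^2}\to s/c$ and $\xi_4-\frac cs(z-m_*|y|)\to 0$ after the rescaling by $\alpha$. One checks that $\psi(\alpha y)/\alpha-m_*|y|$ is bounded by $m_0/\alpha$, which is a fixed shift and does not tend to zero. For this reason I will not compare arguments pointwise. Instead I will use a dichotomy:
\begin{itemize}
\item[(i)] If $|z-m_*|y||\to\infty$, both $\Phi(\xi_4)$ and $\Phi(\frac cs(z-m_*|y|))$ tend to the same limit, $\mathbf 0$ or $\mathbf 1$. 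This uses only that $\xi_4$ and $\frac cs(z-m_*|y|)$ differ by a bounded amount.
\item[(ii)] If $z-m_*|y|$ stays bounded while $|y|\to\infty$, then $\psi(\alpha y)/\alpha-m_*|y|\to0$. This holds because, by the ODE \eqref{2C.8}, $\psi$ approaches its asymptotes exponentially, so the bounded constant shift actually vanishes at infinity. Uniform continuity of $\Phi$ then gives the convergence.
\end{itemize}
Finally, the case $|y|$ bounded and $|z|\to\infty$ is immediate, since both functions go to $\mathbf 0$ or $\mathbf 1$ together.

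For $v^-$, the term $v_1=V(y,z+H)$ is handled by \eqref{1.b}, since a shift by $H$ is harmless. For $v_2$ and $v_3$, I will use Lemma \ref{tvar}: $\varphi(\xi)-\xi/m_*$ for $\xi\ge0$, and $\varphi(\xi)$ for $\xi\le0$, decay like ${\rm sech}(\gamma_1\xi)$, and likewise for the derivatives. This shows that the level sets of $v_2$ are asymptotic to the line $z=y/m_*\cdot$(appropriately) on one side and to a horizontal line on the other. The upshot is that $v_2$ and $v_3$ each lie below $\Phi(\frac cs(z-m_*|y|))+o(1)$ up to $O(\varepsilon)$, and each is close to it on its own half-line branch. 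Taking the maximum with $v_1$ and using monotonicity of $\Phi$, the maximum is within $\varepsilon q_*+o(1)$.

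The main obstacle is exactly this comparison for $v_2$ and $v_3$: one must show that the maximum never overshoots the V-profile by more than $\varepsilon q_*$ in the far field. My plan is to follow the argument of \cite[Lemma 4.10]{wangzhicheng2012}, checking the three regimes $\alpha y\to+\infty$, $\alpha y\to-\infty$, and $y$ bounded with $|z|\to\infty$ separately.
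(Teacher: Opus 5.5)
Your overall plan matches the paper's, which simply invokes \cite[Lemmas 3.1 and 4.10]{wangzhicheng2012}. However, several of the steps you actually write out are wrong.

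The main error is the claim that for $v_1=V(y,z+H)$ ``a shift by $H$ is harmless''. Along a line $z-m_*|y|=r$ with $|y|\to\infty$, \eqref{1.b} gives $v_1\to\Phi(\frac{c}{s}(r+H))$, not $\Phi(\frac{c}{s}r)$, and this discrepancy does not depend on $\varepsilon$. Since $v^-\ge v_1$, for $H<0$ it is an overshoot, and the statement is false for small $\varepsilon$. The statement can only hold for $H\ge0$ (in Theorem~\ref{t2}, $H$ is taken large), and the correct argument is one-sided:
\begin{itemize}
\item $V_z\ll\mathbf0$ gives $v_1\le V\le\Phi(\frac{c}{s}(z-m_*|y|))+o(1)$;
\item $v_1$ is close to the profile only where $z-m_*|y|\to\pm\infty$;
\item along the two asymptotic strips, the lower bound for $v^-$ must come from $v_2$ and $v_3$ alone.
\end{itemize}
This is not cosmetic: the upper bound on $v^-$ is exactly the direction used in the proof of Theorem~\ref{t2}.

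Your description of the geometry of $v_2$ also cannot be read off Lemma~\ref{sub2} as written. There the level curves have slope $1/m_*$ rather than $m_*$, and since $\Phi$ is decreasing, the region where $v_2\approx\mathbf1$ lies above them. So the claim that $v_2,v_3$ stay below the profile and are close on their own branches rests, as in the paper, entirely on \cite[Lemma 4.10]{wangzhicheng2012}.

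For $v^+$, your step (ii) is a non sequitur. Equation \eqref{2C.8} involves only $\psi'$ and $\psi''$, so it is invariant under $\psi\mapsto\psi+k$. Exponential convergence of $\psi'$ to $\pm m_*$ therefore only gives $\psi(r)-m_*|r|\to b$ for some $b\in[0,m_0]$. If $b>0$, then $\psi(\alpha y)/\alpha-m_*|y|\to b/\alpha$, the level sets of $v_4$ follow the V shifted up by $b/\alpha$, and the lemma fails. You need the additional normalization $\lim_{|r|\to\infty}(\psi(r)-m_*|r|)=0$, which \eqref{psii} does not give and must be taken from the construction in \cite{NT}.

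In step (i), $\xi_4-\frac{c}{s}(z-m_*|y|)$ is not bounded: for bounded $y$ the coefficient of $z$ is $(1+\psi'(\alpha y)^2)^{-1/2}-\frac{c}{s}\ne0$. The correct reason is that the numerator $z-\psi(\alpha y)/\alpha$ lies within $m_0/\alpha$ of $z-m_*|y|$, while $1\le\sqrt{1+\psi'(\alpha y)^2}\le s/c$. Hence $\xi_4\to\pm\infty$ together with $z-m_*|y|$.

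Your doubt about the constant is justified, but the remedy via the transition region fails because $v_4$ is not truncated. At $y=0$, as $z\to-\infty$, $v_4-\Phi(\frac{c}{s}(z-m_*|y|))\to\varepsilon P_1$. So the two-sided bound is at least of size $\varepsilon\max_i p_i^1$, which is in general larger than $\varepsilon q_*$. What does hold, and is all Theorem~\ref{t2} uses, is the one-sided bound $v^+\ge\Phi(\xi_4)\ge\Phi(\frac{c}{s}(z-m_*|y|))-o(1)$.
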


\subsection{Large-time behavior of entire solution}

Let $u(t,x)$ be the entire solution of \eqref{1.1} obtained in Theorem \ref{t1}.  
In addition, assume that $u(t,x)$ propagates completely in the sense of \eqref{1.6}.  
In the remainder of this section, we mainly focus on the proof of Theorem \ref{t2}.

\begin{lem}\label{U1}
For any small $\varepsilon>0$, there exist constants $\overline T \in \mathbb{R}$, $A_1>0$ and $A_2>0$ such that 
\begin{equation*}
u(t,x) \ge \left(1 - \varepsilon q_*\right) \cdot \mathbf1 
\quad \text{for all }t \ge \overline T\text{ and } x \in \overline{\Omega} \text{ with } x_2 - st - m_* |x_1| \le -A_1,
\end{equation*}
and
\begin{equation*}
u(t,x) \le \varepsilon q_* \cdot \mathbf1 
\quad \text{for all }t \ge \overline T\text{ and } x \in \overline{\Omega}  \text{ with } x_2 - st - m_* |x_1| \ge A_2.
\end{equation*}
\end{lem}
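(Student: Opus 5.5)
The plan is to sandwich $u$ between the subsolution $\underline w_1$ of Lemma \ref{sub1} and the supersolution $\bar w_1$ of Lemma \ref{sup1}, started at a suitably negative time where Theorem \ref{t1} gives closeness to $V$. Then read off the two one-sided bounds from the behaviour of $V$ away from its asymptotic lines, i.e.\ from \eqref{VC}.

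\emph{Upper bound.} Fix $\delta>0$ small, with $\delta\|\zeta\|_{L^\infty}q^*\le\varepsilon q_*/2$ and $\delta q_*\le \varepsilon q_*/2$. Take $\sigma,\rho,T$ from Lemma \ref{sup1} with $H=0$.

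By Theorem \ref{t1}, choose $t_0\ll 0$ such that $u(t_0,x)\le V(x_1,x_2-st_0)+\delta q_*\mathbf1$ on $\overline\Omega$. I would rather avoid the additive error here by absorbing it into a shift. Since $V(y,z)\to\mathbf0$ for $z-m_*|y|\to\infty$ and $V_z\ll\mathbf0$, one has $V(x_1,x_2-st_0)+\delta q_*\mathbf1\le V(x_1,x_2-st_0-sT)+\delta Q(\cdot)\zeta$ outside a strip, after increasing $T$. In the strip the gap comes from $\kappa$ in \eqref{kapppp}.

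With $H$ chosen so that $z^+$ at time $0$ matches $x_2-st_0$ shifted by $-sT$, comparison (Lemma \ref{com}) gives
$$u(t_0+t,x)\le V\bigl(x_1,x_2-s(t_0+t)+sT'\bigr)+\delta q^*\|\zeta\|_{L^\infty}e^{-\sigma t}\mathbf1$$
for $t\ge0$, with $T'$ fixed. Then \eqref{VC} yields $u\le \delta q_*\mathbf1+\varepsilon q_*\mathbf1/2$ whenever $x_2-st-m_*|x_1|\ge C+sT'=:A_2$. This is at most $\varepsilon q_*\mathbf1$ for all $t\ge \overline T$.

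\emph{Lower bound.} This is the main obstacle. The boundary condition in Lemma \ref{sub1} needs $T$ large, meaning the subsolution is placed far behind (below) the front near $K$. Hence we must start it at a time $t_1$ when $u$ is already close to $\mathbf1$ near the obstacle.

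Here I use complete propagation \eqref{1.6} together with time monotonicity. Choose $R_0$ so that $\underline w_1(0,\cdot)$ is close to $V$'s limits outside $B(0,R_0)$. Pick $t_1$ so large that $u(t_1,\cdot)\ge(1-\delta q_*)\mathbf1$ on $\overline\Omega\cap B(0,R_0)$.

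Away from $B(0,R_0)$, use Theorem \ref{t1} and monotonicity: $u(t_1,x)\ge u(t_0,x)\ge V(x_1,x_2-st_0)-\delta q_*\mathbf1$. Choosing $H=-s(t_1-t_0)$-type shifts plus a large $T$ places the subsolution's front behind $V(x_1,x_2-st_0)$ near $K$. On the bulk, the $-\delta Q\zeta$ term dominates the $\delta q_*$ error, using $\zeta>1$ and $q_i\ge q_*$.

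Checking $\underline w_1(0,\cdot)\le u(t_1,\cdot)$ then splits into three cases, exactly as in the proof of Lemma \ref{sub1}: $V$ near $\mathbf1$, near $\mathbf0$, and the strip where monotonicity in $z$ gives margin. Comparison gives
$$u(t_1+t,x)\ge V\bigl(x_1,x_2-s(t_1+t)+sT''\bigr)-\delta q^*\|\zeta\|_{L^\infty}\mathbf1,$$
and \eqref{VC} yields the first inequality with $A_1=C+sT''$, taking $\overline T$ the larger of the two times.
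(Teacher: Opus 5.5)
Your upper bound is correct and is essentially the paper's Step 2: a single comparison with $\overline w_1$ started at a very negative time, with the supersolution's front placed far enough ahead that it has already passed $K$. The detour through a strip and $\kappa$ is unnecessary. Since $V$ is decreasing in $z$, $\zeta>1$ and $q_i\ge q_*$, the inequality $V(x_1,x_2-st_0)+\delta q_*\mathbf 1\le V(x_1,x_2-st_0-sT)+\delta Q\zeta$ holds everywhere.

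The lower bound has a genuine gap: the initial ordering $\underline w_1(0,\cdot)\le u(t_1,\cdot)$ is not established. The Neumann condition in Lemma \ref{sub1} forces the front of $w_1(0,\cdot)$ to sit at a level $x_2-m_*|x_1|=\ell$ well above $C_K$. Away from $K$, the only information you use is $u(t_1,\cdot)\ge u(t_0,\cdot)\ge V(x_1,x_2-st_0)-\delta q_*\mathbf 1$, and that bound has its front at level $st_0\ll 0$. Since $w_1$ is just a vertical translate of $V$, its level is the same everywhere. You therefore cannot place it past $K$ near $K$ and behind $V(\cdot,\cdot-st_0)$ far away. On the unbounded V-shaped band $\{st_0+C\le x_2-m_*|x_1|\le \ell-C\}$, $w_1(0,x)$ is close to $\mathbf 1-\delta Q\zeta$, while your lower bound for $u(t_1,x)$ is essentially $\mathbf 0$. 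This band is not contained in any ball $B(0,R_0)$. In particular, there is no $R_0$ with $\underline w_1(0,\cdot)$ close to $\mathbf 0$ or $\mathbf 1$ outside $B(0,R_0)$: by \eqref{1.b}, $V$ converges along its asymptotic lines to the planar profile $\Phi(\frac cs(z-m_*|y|))$, not to a constant. Complete propagation \eqref{1.6} is only locally uniform, so it cannot fill this band. The three-case analysis from Lemma \ref{sub1} concerns the differential inequality, not the ordering at $t=0$.

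What is missing is a spreading estimate that is uniform along the whole band. The paper gets it from Lemma \ref{key}. It starts from $u(\tilde T,\cdot)\ge(1-\delta q_*)\mathbf 1$ on $\{x_2-m_*|x_1|-s\tilde T\le-\tilde A\}$, given by Theorem \ref{t1}. It then propagates this bound through chains of balls $B(x^i,R_3)\subset\Omega$ across the strip $\{s\tilde T-\tilde A<x_2-m_*|x_1|\le R\}$, for all points outside $B(x_0,L+R_3-R_2)$. The strip has bounded width, so the number $k$ of steps is uniform. Complete propagation is used only on the bounded set $B(x_0,L+R_3-R_2)\setminus K$, and $u_t\gg\mathbf 0$ combines the two regions. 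This yields $u(\overline T,\cdot)\ge(1-\delta q_*)\mathbf 1$ on all of $\{x_2-m_*|x_1|\le R\}$. The ordering $\underline w_1(0,\cdot)\le u(\overline T,\cdot)$ is then immediate, because $w_1(0,\cdot)\le(1-\delta q_*)\mathbf 1$ everywhere and $w_1(0,\cdot)\le\mathbf 0$ where $x_2-m_*|x_1|\ge R$. Without this step, or an equivalent estimate showing that $u$ stays close to $V(x_1,x_2-st)$ far from $K$ on the finite interval $[t_0,t_1]$, the first inequality of the lemma remains unproved.
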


\begin{proof}
The proof is divided into two steps.

{\it Step 1:  proof of the first statement.}
Let $\varepsilon$ be any small constant such that Lemma \ref{large1} holds. 
Pick any small $\delta$ satisfying 
\begin{equation}\label{del}
0<\delta<\frac{\varepsilon q_*}{2q^*\|\zeta\|_{L^{\infty}(\overline\Omega)}}
\left(<\frac{\varepsilon}{2}\right).
\end{equation}
Take $H=0$ in Lemmas \ref{sub1} and \ref{sup1}.
It follows that there exist positive constants $\sigma$, $\rho$ and $T$ such that $\underline{w}_1(t,x)$ and $\overline{w}_1(t,x)$ are a pair of sub- and supersolutions of \eqref{1.1} for all $t\geq0$ and $x\in\bar\Omega$. 
By the properties of the V-shaped traveling front,
there exists a constant $R>0$ such that
\begin{equation}\label{vdq}
\begin{cases}
V(x_1,x_2-st)\leq\delta q_*\cdot\mathbf1 & \text{if }x_2-st-m_*|x_1|\geq R-sT,\\[4pt]
V(x_1,x_2-st)\geq\left(1-\delta q_*\right)\cdot\mathbf1 & \text{if }x_2-st-m_*|x_1|\leq -R-sT.
\end{cases}
\end{equation}
By Theorem \ref{t1}, there exists $\tilde T<0$ sufficiently small such that
\begin{equation}\label{t11}
-\tfrac{1}{2}\delta q_*\cdot\mathbf1
\leq 
u(\tilde T,x)-V(x_1,x_2-s\tilde T)
\leq
\tfrac{1}{2}\delta q_*\cdot\mathbf1
\quad\text{for }x\in\overline\Omega.
\end{equation}
By \eqref{1.b} and $\Phi(-\infty)=\mathbf1$, there exists $\tilde A>0$ such that
\begin{equation}\label{uV}
u(\tilde T,x)\geq V(x_1,x_2-s\tilde T)-\tfrac{1}{2}\delta q_*\cdot\mathbf1
\geq (1-\delta q_*)\cdot\mathbf1
\end{equation}
for $x\in\overline\Omega$ such that $x_2-s\tilde T-m_*|x_1|\leq-\tilde A$. 

It is evident that for any point $\tilde{y}=(\tilde y_1,\tilde y_2)\in\overline\Omega$ with $\tilde y_2\leq m_*|\tilde y_2|+R$, there holds 
\[
d_\Omega\left(\tilde{y},\left\{x\in\overline\Omega:x_2-m_*|x_1|-s\tilde T\leq-\tilde A\right\}\right)<\infty.
\]
Since the obstacle $K$ is compact, there exist $x_0\in\mathbb R^2$ and $L>0$ such that $K\subset B(x_0,L)$. 
It follows from Lemma \ref{key} that there are positive constants $R_1$, $R_2$, $R_3$ and $\hat T>0$ such that $R_3>R_2>R_1>0$, and if $B(x_0,R_3)\subset\Omega$, then
\[
v_{x_0,R_1}^\delta(\hat T,\cdot)\geq(1-\delta q_*)\cdot\mathbf1
\quad\text{in }\overline{B(x_0,R_2)},
\]
where $v_{x_0,R}^\delta$ is the solution of \eqref{vr}.
Then, for any point 
$
y\in\overline{\Omega\backslash B(x_0,L+R_3-R_2)}\cap
\left\{x\in\mathbb R^2:x_2-m_*|x_1|\leq R\right\}$,
there exist $k$ points $x^1,\dots,x^k\in\mathbb R^2$ such that
\begin{equation*}
\begin{cases}
B(x^1,R_1)\subset\{x\in\mathbb R^2:x_2-m_*|x_1|-s\tilde T\leq -\tilde A\},\\ 
B(x^i,R_3)\subset\Omega, & 1\leq i\leq k,\\ 
B(x^{i+1},R_1)\subset B(x^i,R_2), & 1\leq i\leq k-1,\\ 
y\in B(x^k,R_2).
\end{cases}
\end{equation*}
It follows from Lemma \ref{key}, \eqref{uV} and the comparison principle that
\begin{equation*}
 u(\tilde T+\hat T,x)\geq  v_{  x^1,R_1}(\hat T,x)\geq (1-\delta q_*)\cdot\mathbf1\quad\text{ for }x\in\overline{
B( x^1,R_2)}.
\end{equation*}
Since $B( x^{2},R_1)\subset B( x^{1},R_2)$, one gets that $ u(\tilde T+\hat T,x)\geq (1-\delta q_*)\cdot\mathbf1
$ for $x\in\overline{B(x^2,R_1)}$. Since $B( x^2,R_3)\subset\Omega$, one apply Lemma \ref{key} again and get that $
u(\tilde T+2\hat T,x)\geq (1-\delta q_*)\cdot\mathbf1$ for $x\in\overline{B( x^2,R_2)}$. By induction, one has that $
 u(\tilde T+k\hat T,x)\geq (1-\delta q_*)\cdot\mathbf1$ for $x\in\overline{B( x^k,R_2)}$. Since $y\in B( x^k,R_2)$, there holds
\begin{align}\label{ukT}
u(\tilde T+ k\hat T,x )\geq&(1-\delta q_*)\cdot\mathbf1\\
&\text{ in }\ \overline{\Omega\backslash B(x_0,L+R_3-R_2)}\cap\left\{x\in\mathbb R^2:x_2- m_*|x_1|\leq R\right\}.\notag
\end{align}
It follows from \eqref{1.6} that there is $T{'}\in\mathbb R$ large enough such that
\begin{equation}\label{ukdk}
u(\tilde T+ T{'},x)\geq(1-\delta q_*)\cdot\mathbf1\quad\text{ for all }x\in\overline{ B(x_0,L+R_3-R_2)\backslash  K}.
\end{equation}

Define 
\[
\overline T = \max\left\{\tilde T + k\hat T,\, \tilde T + T'\right\}.
\]
Since $u_t \gg \mathbf0 $ from Theorem \ref{t1}, it follows from \eqref{ukT} and \eqref{ukdk} that
\begin{equation*}
u(\overline T, x) \ge (1 - \delta q_*) \cdot \mathbf1
\quad \text{for all } 
x \in \overline{\Omega} \text{ such that } x_2 - m_* |x_1| \le R.
\end{equation*}
Since $\mathbf0 \le V \le \mathbf1$ and $\zeta \ge 1$, we have
\begin{equation*}
u(\overline T, x) 
\ge (1 - \delta q_*) \cdot \mathbf1
\ge V(x_1, x_2 - sT) - \delta Q(\xi_1(0, x)) \zeta(x)
= w_1(0, x)
\end{equation*}
for all $x \in \overline{\Omega}$ satisfying $x_2 - m_* |x_1| \le R$.
For any $x \in \overline{\Omega}$ such that $x_2 - m_* |x_1| \ge R$, we have $x_2 - sT - m_* |x_1| \ge R - sT$.  
Hence, by \eqref{pq} and   \eqref{vdq}, there holds
\begin{equation*}
w_1(0, x)
= V(x_1, x_2 - sT) - \delta Q(\xi_1(0, x)) \zeta(x)
\le (\delta q_*- \delta q_*)\cdot\mathbf1
=\mathbf0
\le u(\overline T, x)
\end{equation*}
for all $x \in \overline{\Omega}$ with $x_2 - m_* |x_1| \ge R$.

As a conclusion,
\[
u(\overline T, x) \ge \max\{ w_1(0, x), \mathbf0 \}=\underline w_1(0,x)
\quad \text{for all } x \in \overline{\Omega}.
\]
By the comparison principle, it follows that
\begin{equation*}
u(\overline T + t, x) 
\ge \underline{w}_1(t, x)
= \max\{ w_1(t, x), \mathbf0 \}
\ge w_1(t, x)
\quad \text{for all } t \ge 0 \text{ and } x \in \overline{\Omega}.
\end{equation*}
From \eqref{del} and the properties of the V-shaped traveling front, we obtain that there exists a constant $A_1 > 0$ such that
\begin{align*}
u(t, x)
&\ge w_1(t - \overline T, x) \\
&= V\left(x_1,\, x_2 - s(t - \overline T + T) + \rho \delta \big(1 - e^{-\sigma (t - \overline T)}\big)\right) 
- \delta Q(\xi_1(t - \overline T, x)) \zeta(x) e^{-\sigma (t - \overline T)} \\
&\ge \left(1 - \tfrac{1}{2} \varepsilon q_*\right) \cdot \mathbf1 
- \delta q^*\|\zeta\|_{L^\infty(\bar\Omega)} \cdot \mathbf1 
\ge \left(1 - \varepsilon q_*\right) \cdot \mathbf1
\end{align*}
for all $t \ge \overline T$ and $x \in \overline{\Omega}$ such that $x_2 - st - m_* |x_1| \le -A_1$.  
This completes the proof of the first statement of this lemma.

{\it Step 2: proof of the second statement.}
By \eqref{1.b}, \eqref{t11} and $\Phi(\infty) = \mathbf0$, there exists $\bar A > 0$ such that
\begin{equation*}
u(\tilde T, x) \le V(x_1, x_2 - s\tilde T) + \tfrac{1}{2}\delta q_* \cdot \mathbf1 
\le \delta q_* \cdot \mathbf1
\end{equation*}
for all $x \in \overline{\Omega}$ such that $x_2 - m_* |x_1| - s\tilde T \ge \bar A$.  
From \eqref{pq}, there holds
\begin{equation*}
w_1^+(0, x) = V(x_1, x_2 - sT) + \delta Q(\xi_1(0, x)) \zeta(x)
\ge \delta q_* \cdot \mathbf1 \ge u(\tilde T, x)
\end{equation*}
for all $x \in \overline{\Omega}$ such that $x_2 - m_* |x_1| - s\tilde T \ge \bar A$.

For any $x \in \overline{\Omega}$ such that $x_2 - m_* |x_1| - s\tilde T \le \bar A$, one has
$x_2 - sT - m_* |x_1| \le s(\tilde T - T) + \bar A$.
Since $T > 0$ and $\tilde T < 0$ is sufficiently small, by decreasing $\tilde T$ if necessary, we can ensure that 
$x_2 - sT - m_* |x_1| \le -R - sT$.  
It then follows from \eqref{vdq} that
\begin{equation*}
V(x_1, x_2 - sT) \ge (1 - \delta q_*) \cdot \mathbf1 
\quad \text{for all } x \in \overline{\Omega} 
\text{ such that } x_2 - m_* |x_1| - s\tilde T \le \bar A.
\end{equation*}
Therefore, one has
\begin{equation*}
w_1^+(0, x) = V(x_1, x_2 - sT) + \delta Q(\xi_1^+(0, x)) \zeta(x)
\ge (1 - \delta q_*) \cdot \mathbf1 +\delta q_* \cdot \mathbf1 
= \mathbf1 \ge u(\tilde T, x)
\end{equation*}
for all $x \in \overline{\Omega}$ such that $x_2 - m_* |x_1| - s\tilde T \le \bar A$.  

As a conclusion, we have 
\[
\bar w_1(0,x)=\min\{ w_1^+(0, x), \mathbf1 \} \ge u(\tilde T, x)
\quad \text{for all } x \in \overline{\Omega}.
\]
By the comparison principle, it follows that
\begin{equation*}
u(t+\tilde T , x) 
\le\bar w_1(t,x)=\min\{ w_1^+(t, x), \mathbf1 \} 
\le w_1^+(t, x)
\quad \text{for all } t \ge 0 \text{ and } x \in \overline{\Omega}.
\end{equation*}
It follows from \eqref{1.b}, \eqref{del} and $\Phi(\infty) = \mathbf0$ that there exists $A_2 > 0$ such that
\begin{align*}
u(t, x) 
&\le w_1^+(t - \tilde T, x) \\
&= V\left(x_1, x_2 - s(t - \tilde T + T) - \rho \delta (1 - e^{-\sigma t})\right)
  + \delta Q(\xi_1(t, x)) \zeta(x) e^{-\sigma t} \\
&\le \tfrac{1}{2} \varepsilon q_* \cdot \mathbf1 
  + \delta q^*\|\zeta\|_{L^\infty(\bar\Omega)} \cdot \mathbf1 
  \le \varepsilon q_* \cdot \mathbf1
\end{align*}
for all $t \ge \tilde T$ and $x \in \overline{\Omega}$ such that 
$x_2 - m_* |x_1| - st \ge A_2$.
Since $\bar T>\tilde T$, the second statement is true.
This completes the proof.
\end{proof}

Next, we show the relation between $ u$ and the V-shaped traveling front.
\begin{lem}{\label{u-V}}
Let $\overline T\in\mathbb R$ such that Lemma \ref{U1} holds. Then for any $\varepsilon>0$, there exists $\tilde R>0$ such that
\begin{equation*}
-\varepsilon q_*\cdot\mathbf1 \leq u(t,x )- V(x_1,x_2-st)\leq\varepsilon q_*\cdot\mathbf1
\end{equation*}
 for all $x\in\overline{\Omega \backslash B(\eta(t),\tilde R)}$, where $\eta (t)=(0,st)\in\mathbb R^2$.
\end{lem}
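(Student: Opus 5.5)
The plan is to prove the two-sided estimate for all $t\ge \overline T$ by splitting $\overline{\Omega\backslash B(\eta(t),\tilde R)}$ into a region far from the interface $\{x_2-st=m_*|x_1|\}$ and a region near the interface but far from the tip $\eta(t)$. Far from the interface the claim follows from Lemma \ref{U1} and \eqref{1.b}. Choose $A:=\max\{A_1,A_2\}$ from Lemma \ref{U1} (with $\varepsilon$ replaced by $\varepsilon/2$). By \eqref{1.b} and $\Phi(\pm\infty)$, enlarging $A$ if necessary, $V(x_1,x_2-st)$ is within $\tfrac12\varepsilon q_*$ of $\mathbf1$ (resp. $\mathbf0$) when $x_2-st-m_*|x_1|\le -A$ (resp. $\ge A$). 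Since $\mathbf0\le u,V\le\mathbf1$, in these regions both functions lie in the same $\tfrac12\varepsilon q_*$-neighbourhood of $\mathbf1$ or of $\mathbf0$, which gives $|u-V|\le \varepsilon q_*$ componentwise for $t\ge\overline T$.

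It therefore remains to treat the strip $S_t:=\{x\in\overline\Omega:|x_2-st-m_*|x_1||\le A,\ |x-\eta(t)|\ge\tilde R\}$. There $\Phi(\frac cs(x_2-st-m_*|x_1|))$ is not close to a constant, so translation errors must be genuinely small, of order $\delta$. For the lower bound I will use the arm subsolutions $w_2,w_3$ of Lemma \ref{sub3}, merged with $w_1$ as in Lemma \ref{sub4}. The point is that in Lemma \ref{sub3} the time shift $T\in\mathbb R$ is free and the only spatial drift is $\rho\delta(1-e^{-\beta t})=O(\delta)$. I will start the comparison at a very negative time $t_0$, where Theorem \ref{t1} gives $\|u(t_0,\cdot)-V(\cdot,\cdot-st_0)\|\le\tfrac12\delta q_*$. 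Taking $T=t_0$ in $w_2,w_3$, and using that $v_2,v_3\le V$ up to $\varepsilon q_*$ (Lemma \ref{large1}), I obtain $u(t_0,\cdot)\ge \max\{w_2,w_3\}(0,\cdot)$ up to the $\delta Q$ correction, once $\delta$ is adjusted. The boundary condition for the merged subsolution is handled exactly as in Lemma \ref{sub4}, because $w_1$ dominates on $\partial\Omega$. Comparison then yields $u(t,x)\ge v^-(t,x-O(\delta))-\delta q^*e^{-\sigma(t-t_0)}\mathbf1$. Lemma \ref{large1}, together with uniform continuity of $\Phi$ (which makes an $O(\delta)$ shift cost $o(1)$ as $\delta\to0$), gives $u\ge V-\varepsilon q_*\mathbf1$ on $S_t$ for $\tilde R$ large. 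The $w_1$ branch with its large shift $sT$ does not matter here, because the maximum only helps a lower bound.

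For the upper bound I plan to use the supersolution $\widetilde w^+$ of Lemma \ref{super2}, built from $v_4$, again started at a very negative time with $\delta$-small drift, and then use Lemma \ref{large1} for $v^+$. The main obstacle is the boundary condition. Lemma \ref{super2} obtains $\widetilde w^+=\mathbf1$ on $\partial\Omega$ only by taking the time shift $T$ large, and that moves the front by $sT$, an $O(1)$ amount that destroys $\varepsilon$-accuracy on $S_t$. I would first try to avoid the boundary altogether. Since $u\le\mathbf1$, it suffices to find a supersolution on a subdomain. On $S_t$ with $\tilde R$ large, the points are at distance $\gtrsim\tilde R$ from $K$ once $t\ge\overline T$ (the tip moves away at speed $s$ and $S_t$ stays near the arms). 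So I would compare $u$ with $\min\{v_4(x_1\pm\cdot,\,x_2-st-\rho\delta(1-e^{-\sigma t}))+\delta Pe^{-\sigma t},\mathbf1\}$ on the moving region $\{x_2-st-m_*|x_1|\ge -A'\}\setminus B(\eta(t),\tilde R')$. On its parabolic boundary, Lemma \ref{U1} already shows the supersolution is at least $u$: near the inner edge $x_2-st-m_*|x_1|=-A'$ the supersolution is nearly $\mathbf1$, and the remaining part of the boundary is handled using $\mathbf0\le u\le\mathbf1$. If controlling the boundary piece near $B(\eta(t),\tilde R')$ fails, the fallback is a one-sided planar-front supersolution $\Phi(\frac cs(\cdot)-O(\delta))+\delta Q_0e^{-\sigma t}$ along each arm separately, exploiting that far along an arm $V$ is uniformly close to a planar front by \eqref{1.b}.
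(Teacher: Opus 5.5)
Your treatment of the region $|x_2-st-m_*|x_1||\ge A$ via Lemma \ref{U1} and \eqref{1.b} is fine, and the paper does the same there. The gap is in the strip near the arms.

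\textbf{Lower bound.} Your comparison functions start at $t_0\ll 0$, so their fronts must sweep across $K$. The Neumann condition is then \emph{not} handled as in Lemma \ref{sub4}. Lemmas \ref{sub1} and \ref{sub4} need the shift $T$ to be large and positive (see \eqref{T}). That guarantees $\partial\Omega$ lies deep in the $\mathbf 1$-region of $w_1$ for all $t\ge 0$, where $|\nabla V|$ is exponentially small. With $T=t_0$, while the front crosses $K$ the term $\nu\cdot\nabla V$ on $\partial\Omega$ is of order one and has no sign. Neither $w_1$ nor $w_2,w_3$ then satisfies the boundary inequality; $w_2,w_3$ were never checked on $\partial\Omega$ at all. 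Keeping a large shift in $w_1$ alone does not help either. Then $w_1(0,\cdot)\approx\mathbf 1$ near $K$ while $u(t_0,\cdot)\approx\mathbf 0$ there, so the initial ordering fails: the maximum helps your conclusion but breaks the initial comparison.

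In fact no argument of this kind can succeed. It never uses \eqref{1.6}. Yet the inequality $u\ge v^-(t,\cdot-O(\delta))-\delta q^*e^{-\sigma(t-t_0)}\mathbf 1$ on all of $\overline\Omega$ would give $\liminf_{t\to\infty}u\ge(1-\varepsilon q^*)\mathbf 1$ near $K$. That is complete propagation for every obstacle, which is false in general: suitable non-convex obstacles can block propagation, already in the scalar case. This is exactly why \eqref{1.6} is an assumption.

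\textbf{Upper bound.} The same defect appears, and the moving-domain fix does not close it. For very negative $t$ your region contains $K$. On its inner edge, Lemma \ref{U1} gives only the \emph{lower} bound $u\ge(1-\varepsilon q_*)\mathbf 1$, and only for $t\ge\overline T$. You need $u\le\widetilde w$ there, which forces $\widetilde w=\mathbf 1$ on that edge. This fails for large $t$ far along the arms, where $v_4<\mathbf 1$ and the $\delta$-correction has decayed. Nothing is known about $u$ on $\partial B(\eta(t),\tilde R')$ either. The planar-front fallback lies below $u$ near the opposite arm, so it too needs an uncontrolled lateral boundary.

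\textbf{The missing idea} is to never follow $u$ through the obstacle. The paper fixes the time and translates only in space along an arm. Take $x_{2n}-m_*x_{1n}=r$ with $|x_n|\to\infty$. Then $u(t,x+x_n)$ converges, up to a subsequence, to an entire solution $U$ of \eqref{Ueq} in the whole plane, because $K$ is pushed to infinity. Since the convergence in Theorem \ref{t1} is uniform in $x$, and using \eqref{1.b}, $U$ inherits \eqref{UP}.

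A Liouville-type step then identifies $U$. It is squeezed between whole-plane planar-front sub- and supersolutions $\Phi(\xi^\mp)\mp\varepsilon Q(\xi^\mp)e^{-\omega(t-T'')}$, whose phase drift is only $O(\varepsilon)$. Letting $T''\to-\infty$ and then $\varepsilon\to 0$ gives \eqref{Claim}. Covering the strip along each arm by balls centred on the arm yields the estimate there, and Lemma \ref{U1} handles the rest.

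No comparison function ever meets $\partial\Omega$ during the passage, and \eqref{1.6} enters only through Lemma \ref{U1}. This compactness argument gives $\tilde R$ for each fixed $t\ge\overline T$, locally uniformly in $t$. That is all Step 1 of the proof of Theorem \ref{t2} uses.
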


\begin{proof}
Let $r$ be any fixed constant. Choose a sequence 
$\{x_n\}_{n \in \mathbb N} = \{(x_{1n}, x_{2n})\}_{n \in \mathbb N} \subset \mathbb R^2$ 
such that 
\[
x_{1n} > 0, \quad 
x_{2n} - m_* x_{1n} = r 
\quad \text{and} \quad 
|x_n| \to \infty\ \text{ as } \ n \to \infty.
\]
Denote 
\[
u_n(t, x) =u(t,x+x_n)= u(t, x_1 + x_{1n},\, x_2 + x_{2n})
\]
for all $t \in \mathbb R$ and $x \in \Omega \setminus \{x_n\}$.  
Since $\mathbf0 \le u \le \mathbf1$ and the obstacle 
$K = \mathbb R^2 \setminus \Omega$ is bounded, it follows from standard parabolic estimates that, up to extraction of a subsequence, 
\[
u_n(t, x) \to U(t, x)
\quad \text{locally uniformly in } (t, x) \in \mathbb R \times \mathbb R^2
\text{ as } n \to \infty,
\]
where $U(t, x) \in [\mathbf0, \mathbf1]$ is a classical solution of
\begin{equation}\label{Ueq}
U_t - D \Delta U - F(U) = \mathbf0,
\quad t \in \mathbb R,~ x \in \mathbb R^2.
\end{equation}

By the choice of $\{x_n \}$, we have $x_{1n} \to \infty$.  
Hence, for $x_1 \ge -\frac{x_{1n}}{2}$,
\[
(x_1 + x_{1n})^2 + (x_2 + x_{2n} - st)^2 \to \infty
\quad \text{as } n \to \infty.
\]
It then follows from \eqref{1.b} that
\begin{equation*}
V(x_1 + x_{1n},\, x_2 + x_{2n} - st)
- \Phi\left(\tfrac{c}{s}(x_2 - st - m_* x_1 + r)\right)
\to \mathbf0
\end{equation*}
uniformly in $x \in \mathbb R^2$ as  $n \to \infty$.  
Moreover, since 
$u(t, x) - V(x_1, x_2 - st) \to \mathbf0$ 
uniformly in $x \in \mathbb R^2$ as $t \to -\infty$ from Theorem \ref{t1}, 
we obtain
\begin{equation}\label{UP}
U(t, x) \to 
\Phi\left(\tfrac{c}{s}(x_2 - st - m_* x_1 + r)\right)
\quad \text{uniformly in } x \in \mathbb R^2
\text{ as } t \to -\infty.
\end{equation}
We claim that
\begin{equation}\label{Claim}
U(t, x) =
\Phi\left(\tfrac{c}{s}(x_2 - st - m_* x_1 + r)\right)
\quad \text{for all } t \in \mathbb R \text{ and } x \in \mathbb R^2.
\end{equation}
For clarity and coherence, we place the proof of claim \eqref{Claim} after that of Lemma~\ref{u-V}.  

By \eqref{Claim}, there holds
\begin{equation}\label{unp}
u(t, x + x_n)
\to 
\Phi\left(\tfrac{c}{s}(x_2 - st - m_* x_1 + r)\right)
\end{equation}
locally uniformly in $(t, x) \in \mathbb R \times \mathbb R^2$ as $n \to \infty$.  
Similarly, for a sequence $\{x_n\}_{n \in \mathbb N}
= \{(x_{1n}, x_{2n})\}_{n \in \mathbb N} \subset \mathbb R^2$
such that 
$x_{1n} < 0$, 
$x_{2n} + m_* x_{1n} = r$
and
$|x_n| \to \infty $ as $ n \to \infty$,
there holds
\begin{equation*}
u(t, x + x_n)
\to 
\Phi\left(\tfrac{c}{s}(x_2 - st + m_* x_1 + r)\right)
\end{equation*}
locally uniformly in $(t, x) \in \mathbb R \times \mathbb R^2$ as $n \to \infty$.

Fix any $t_* \ge \overline T$.  
Pick a sequence $\{y_n\}_{n \in \mathbb N} = \{(y_{1n}, y_{2n})\}_{n \in \mathbb N} \subset \mathbb R^2$ 
satisfying $y_{1n} \ge 0$, $y_{2n} - m_* y_{1n} = st_*$ and $|y_n| \to \infty$ as $n \to \infty$.  
Notice that there exists $R > 0$ such that
\begin{equation*}
\Big\{x \in \overline\Omega : x_1 \ge 0,\ 
-A_1 \le x_2 - st_* - m_* x_1 \le A_2 \Big\}
\subset \bigcup_{n=1}^{\infty} B(y_n, R)
\end{equation*}
for some positive constants $A_1$ and $A_2$.  
It then follows from \eqref{unp} that
$
-\varepsilon q_* \cdot \mathbf1 \le 
u(t_*, x + y_n)
- \Phi\left(\tfrac{c}{s}(x_2 - m_* x_1)\right)
\le \varepsilon q_* \cdot \mathbf1
$
for all large $n$ and $|x| < R$.  
It implies that there exists $\tilde R > 0$ such that
\begin{equation}\label{ups}
-\varepsilon q_* \cdot \mathbf1 \le 
u(t_*, x)
- \Phi\left(\tfrac{c}{s}(x_2 - st_* - m_* x_1)\right)
\le \varepsilon q_* \cdot \mathbf1
\end{equation}
for all  $x \in \overline\Omega$ such that $x_1 \ge 0$, 
$-A_1 \le x_2 - st_* - m_* x_1 \le A_2$ 
and $|x - (0, st_*)| \ge \tilde R$.  
Similarly, by increasing $\tilde R$ if necessary,
one obtains that \eqref{ups} holds for all 
$x \in \overline\Omega$ such that $x_1 < 0$, 
$-A_1 \le x_2 - st_* + m_* x_1 \le A_2$
and $|x - (0, st_*)| \ge \tilde R$.  
According to the choice of $\tilde R$, 
one may take $A_1$ and $ A_2$ sufficiently large.

Since $t_* \ge \overline T$ was arbitrary, we deduce that
\begin{equation}\label{uPH1}
-\varepsilon q_* \cdot \mathbf1 \le
u(t, x)
- \Phi\left(\tfrac{c}{s}(x_2 - st - m_* |x_1|)\right)
\le \varepsilon q_* \cdot \mathbf1
\end{equation}
for all $t \ge \overline T$ and $x \in \overline\Omega$ such that
$-A_1 \le x_2 - st - m_* |x_1| \le A_2$
and $|x - \eta(t)| \ge \tilde R$, where $\eta(t) = (0, st)$.  
Since $\Phi'(\xi) \ll \mathbf0$, 
one can choose $A_1 > 0$ and $A_2 > 0$ sufficiently large so that
\begin{equation*}
\Phi\left(\tfrac{c}{s}(x_2 - st - m_* |x_1|)\right)
\ge (1 - \varepsilon q_*) \cdot \mathbf1
\end{equation*}
for all $t \ge \overline T$ and $x \in \overline\Omega$ such that $x_2 - st - m_* |x_1| \le -A_1$, 
and
\begin{equation*}
\Phi\left(\tfrac{c}{s}(x_2 - st - m_* |x_1|)\right)
\le \varepsilon q_* \cdot \mathbf1
\end{equation*}
for all $t \ge \overline T$ and  $x \in \overline\Omega$ such that $x_2 - st - m_* |x_1| \ge A_2$.  
Combining this with Lemma~\ref{U1}, we obtain
\begin{equation}\label{UPH2}
-\varepsilon q_* \cdot \mathbf1 \le 
u(t, x)
- \Phi\!\left(\tfrac{c}{s}(x_2 - st - m_* |x_1|)\right)
\le \varepsilon q_* \cdot \mathbf1
\end{equation}
for all $t \ge \overline T$ and $x \in \overline\Omega$ such that 
$x_2 - st - m_* |x_1| \le -A_1$ or $x_2 - st - m_* |x_1| \ge A_2$.  
It then follows from \eqref{1.b}, \eqref{uPH1}, and \eqref{UPH2} that
\begin{equation*}
-\varepsilon q_* \cdot \mathbf1 \le 
u(t, x) - V(x_1, x_2 - st)
\le \varepsilon q_* \cdot \mathbf1
\end{equation*}
for all $t \ge \overline T$ and 
$x \in \overline{\Omega \setminus B(\eta(t), \tilde R)}$.
The proof of Lemma \ref{u-V} is thereby complete.

{\it Proof of claim \eqref{Claim}.} The main strategy is to construct a pair of sub- and supersolutions of \eqref{Ueq}. 
Take 
\begin{align}\label{so}
0<\sigma<2\varepsilon_0
\ \text{ and }\
0<\omega<\frac{\varpi q_*}{q^*}(<\varpi).
\end{align}
Define
\begin{align}\label{rrho}
\rho=(\Lambda+\omega)q^*+M(c+\overline D)+1.
\end{align}
Since $\Phi(-\infty)=\mathbf1$ and $\Phi(\infty)=\mathbf0$, there exists a constant $C>1$ such that
\begin{equation}\label{PCC}
\begin{cases}
 \Phi(\xi)\geq(1-\sigma)\cdot\mathbf1 &\text{for all }\xi\leq -C,\\[3pt]
 \Phi(\xi)\leq\sigma\cdot\mathbf1 &\text{for all }\xi\geq C.
 \end{cases}
\end{equation}
Furthermore, there exists $\kappa>0$ such that
\begin{equation}\label{kappa}
\max_{i=1,\cdots,m}\left(\sup_{-C\leq\xi\leq C}\Phi_i'(\xi)\right)\leq -\kappa.
\end{equation}
Let $\varepsilon\in\left(0,\frac{2\varepsilon_0}{q^*}\right)$ be any fixed constant.  
By \eqref{UP}, there exists $T'<0$ such that
\begin{equation}\label{UPUP}
-\varepsilon q_*\cdot\mathbf1
\leq
U(t,x)-\Phi\!\left(\frac{c}{s}(x_2-st-m_*x_1+r)\right)
\leq
\varepsilon q_*\cdot\mathbf1
\end{equation}
for all $t\leq T'$  and $x\in\mathbb R^2$.
Choose any $T''\leq T'$.  
Define $u^\mp=(u_1^\mp,\cdots,u_m^\mp)$ by
\begin{equation*}
u^-(t,x)=\max\Bigl\{\Phi(\xi^-(t,x))-\varepsilon Q(\xi^-(t,x))e^{-\omega(t-T'')},\mathbf0\Bigr\}
\end{equation*}
and
\begin{equation*}
u^+(t,x)=\min\Bigl\{\Phi(\xi^+(t,x))+\varepsilon Q(\xi^+(t,x))e^{-\omega(t-T'')},\mathbf1\Bigr\}
\end{equation*}
for all $t\geq T''$ and $x\in\mathbb R^2$, where
\begin{equation*}
\xi^{\mp}(t,x)
=\frac{c}{s}(x_2-st-m_*x_1+r)
\pm\frac{\varepsilon \rho}{\omega\kappa}\bigl(1-e^{-\omega(t-T'')}\bigr).
\end{equation*}
We only need to prove that $u^-$ is a subsolution of \eqref{Ueq}, the proof that $u^+$ is a supersolution being similar.

Fix any index $i=1,\cdots,m$.  
Let us first show that 
\[
\mathcal L_i(u^-)(t,x) := (u^-_i)_t(t,x) - D_i \Delta u^-_i(t,x) - F_i(u^-(t,x)) \leq 0
\] 
for all $t \geq T''$ and $x \in \mathbb R^2$ such that $u_i^-(t,x) > 0$.  
Note that $\frac{c^2(1+m_*^2)}{s^2} = 1$ by \eqref{mstar}.  
By direct calculations, one has
\begin{align*}
\mathcal L_i(u^-)(t,x)
&= \varepsilon \omega q_i(\xi^-(t,x)) e^{-\omega(t-T'')}
- \varepsilon D_i q_i''(\xi^-(t,x)) e^{-\omega(t-T'')} \\ 
&\quad - \varepsilon q_i'(\xi^-(t,x)) \left(-c + \frac{\varepsilon \rho}{\kappa} e^{-2\omega(t-T'')}\right)
 + \frac{\varepsilon \rho}{\kappa} \Phi_i'(\xi^-(t,x)) e^{-\omega(t-T'')}\\
&\quad 
+ F_i(\Phi(\xi^-(t,x))) - F_i(u^-(t,x)). 
\end{align*}

If $\xi^-(t,x) \leq -C$, then $Q(\xi^-(t,x)) = Q_1$ from \eqref{dpq}.  
From \eqref{pq}, \eqref{so} and \eqref{PCC}, one has
\[
(1-4\varepsilon_0)\cdot\mathbf1 \leq (1-\sigma)\cdot\mathbf1 - \varepsilon q^* \cdot\mathbf1 \leq u^-(t,x) \leq \Phi(\xi^-(t,x)) \leq \mathbf1.
\]  
By \eqref{muij}, there holds
\begin{align*}
F_i(\Phi(\xi^-(t,x))) - F_i(u^-(t,x))
\leq \varepsilon \sum_{i,j=1}^m \mu_{ij}^1 q_j^1 e^{-\omega(t-T'')}
\leq - \varepsilon \varpi q_i^1 e^{-\omega(t-T'')}.
\end{align*}  
Hence, it follows from $\Phi_i' < 0$ and \eqref{so} that
\[
\mathcal L_i(u^-)(t,x)
= \varepsilon q_i^1 e^{-\omega(t-T'')} (\omega - \varpi) \leq 0.
\]

If $\xi^-(t,x) \geq C$, then $Q(\xi^-(t,x)) = Q_0$ from \eqref{dpq}.  
A similar argument implies that $\mathcal L_i(u^-)(t,x) \leq 0$.

Next, consider the case $-C \leq \xi^-(t,x) \leq C$.  
By \eqref{pq} and \eqref{LA}, there holds
\[
F_i(\Phi(\xi^-(t,x))) - F_i(u^-(t,x))
\leq \varepsilon \Lambda q_i(\xi^-(t,x)) e^{-\omega(t-T'')}
\leq \varepsilon \Lambda q^* e^{-\omega(t-T'')}.
\]  
By \eqref{pq}, \eqref{D}, \eqref{kappa} and \eqref{rrho}, one has
\[
\mathcal L_i(u^-)(t,x)
\leq \varepsilon e^{-\omega(t-T'')} 
\left(-\rho + \Lambda q^* + \omega q^* + c M + \bar D M \right) \leq 0.
\]  
Thus we have proved that $u^-$ is a subsolution of \eqref{Ueq} in $[T'',\infty) \times \mathbb R^2$.

Since \eqref{UPUP} implies that
\[
u^-(T'',x) \leq U(T'',x) \leq u^+(T'',x)
\quad\text{for all } x \in \mathbb R^2,
\]
it follows from the comparison principle that
\[
u^-(t,x) \leq U(t,x) \leq u^+(t,x)
\quad\text{for all } t \geq T'' \text{ and } x \in \mathbb R^2.
\]

Letting $T'' \to -\infty$, one deduces that
\[
\Phi\!\left(\frac{c}{s}(x_2 - st - m_* x_1 + r) + \frac{\varepsilon \rho}{\kappa \omega}\right)
\leq U(t,x) 
\leq \Phi\!\left(\frac{c}{s}(x_2 - st - m_* x_1 + r) - \frac{\varepsilon \rho}{\kappa \omega}\right)
\]
for all $t \in \mathbb R$ and $x \in \mathbb R^2$.  
By the arbitrariness of $\varepsilon$, it follows that the claim \eqref{Claim} is true.  
This completes the proof.
\end{proof}

\begin{proof}[Proof of Theorem \ref{t2}] 
The proof is divided into two steps.

{\it Step 1: proof of \eqref{1.7}.}
Let $\varepsilon>0$ be a sufficiently small constant. Let $\delta>2\varepsilon$ be a small constant such that Lemmas \ref{sub4} and \ref{super2} hold.
 Take $T>0$ large enough such that Lemmas \ref{sub4}, \ref{super2}, \ref{U1} and \ref{u-V} hold for $\delta$ and $\varepsilon$. It follows from Lemmas \ref{large1} and \ref{u-V} that there is $R>0$ such that
\begin{align*}
 u(T,x)&\geq\max\left\{ V(x_1,x_2-sT)-\varepsilon q_*\cdot\mathbf1,\mathbf0\right\}
\geq\max\left\{ v^-(T,x)-2\varepsilon q_*\cdot\mathbf1,\mathbf0\right\}\\
&\geq\max\left\{ v^-(T,x)-\delta q_*\cdot\mathbf1,\mathbf0\right\}
\geq\max\left\{ w^-(0,x),\mathbf0\right\}=\widetilde{ w}^-(0,x)
\end{align*}
and
\begin{align*}
 u(T,x)&\leq\min\left\{ V(x_1,x_2-sT)+\varepsilon q_*\cdot\mathbf1,\mathbf1\right\}
\leq\min\left\{ v^+(T,x)+2\varepsilon q_*\cdot\mathbf1,\mathbf1\right\}\\
&\leq\min\left\{ v^+(T,x)+\delta q_*\cdot\mathbf1,\mathbf1\right\}
\leq\min\left\{ w^+(0,x),\mathbf1\right\}=\widetilde{ w}^+(0,x)
\end{align*}
for all $x\in\overline\Omega$ such that $|x-(0,sT)|\geq R$.

Without loss of generality, it can be assumed that $ v_1(x_1,x_2-sT)\leq \delta q_*\cdot\mathbf1$ for all $x\in\overline\Omega$ such that $|(x_1,x_2)-(0,sT)|<R$ by increasing $H$ if necessary. Furthermore, it follows from the proof of \cite[Lemma 3.2]{NT1} that one can make $\alpha>0$ sufficiently small such that
\begin{equation*}
 v_2(x_1,x_2-sT;\varepsilon,\alpha), v_3(x_1,x_2-sT;\varepsilon,\alpha)\leq\delta q_*\cdot\mathbf1
\end{equation*}
for all $x\in\overline\Omega$ such that $|x-(0,sT)|< R$. 
It yields that $ w^-(0,x )\leq\mathbf0$ for $x\in\overline\Omega$ such that $|x-(0,sT)|<R$.
  Hence, $\widetilde{ w}^-(0,x )\leq u(T,x )$ for $x\in \overline\Omega$. One can obtain from the comparison principle that
\begin{equation*}
\widetilde{ w}^-(t,x )\leq u(t+T,x )\quad\text{ for all }t\geq0\text{ and }
x\in\overline\Omega.
\end{equation*}

Similarly, it follows from the proof of \cite[Theorem 1.3]{NT} that one can make $\alpha>0$ sufficiently small such that
\begin{equation*}
 v^+(x_1,x_2-sT;\varepsilon,\alpha)\geq(1-\delta q_*)\cdot\mathbf1
\end{equation*}
for all $x\in\overline\Omega$ such that $|x-(0,sT)|< R$.
It implies that  $\widetilde{ w}^+(0,x )=\mathbf1\geq u(T,x)$ for $x\in\overline\Omega$ such that $|x-(0,sT)|< R$.
Hence $\widetilde{ w}^+(0,x)\geq u(T,x )$ for all $x\in \overline\Omega$. 
By the comparison principle, there holds
\begin{equation*}
\widetilde{ w}^+(t,x )\geq u(t+T,x)\quad \text{ for all }t\geq0\text{ and }
x\in\overline\Omega.
\end{equation*}
In conclusion, one has
\begin{equation*}
w^-(t,x) \leq \widetilde w^-(t,x) \leq u(t+T,x) \leq \widetilde w^+(t,x) \leq w^+(t,x)
\quad \text{ for all } t \geq 0 \text{ and } x \in \overline\Omega.
\end{equation*}
Let $t+T \to \infty$, there holds 
\begin{align}\label{uv123}
&\max\Bigl\{
v_1(x_1, x_2 - st + \rho\delta;\varepsilon,\alpha),\ 
v_2(x_1 - \rho\delta, x_2 - st;\varepsilon,\alpha),\\
&\qquad
v_3(x_1 + \rho\delta, x_2 - st;\varepsilon,\alpha)
\Bigr\}
\leq u(t,x)
\leq v_4(x_1, x_2 - st - \rho\delta;\varepsilon,\alpha).\nonumber
\end{align}

Choose any sequence $\{t_n\}_{n\in\mathbb N} \subset \mathbb R$ such that $t_n \to \infty$ as $n \to \infty$.  
Define
\[
u_n(t,x) = u(t+t_n, x_1, x_2 + s t_n)
\quad \text{for } t \in \mathbb R \text{ and } x \in \overline\Omega.
\]
From standard parabolic estimates, up to extraction of a subsequence,
\[
u_n(t,x) \to U(t,x)
\quad \text{locally uniformly in } (t,x) \in \mathbb R \times \overline\Omega
\text{ as } n \to \infty,
\]
where $U(t,x)$ is the classical solution of 
\[
U_t - D\Delta U - F(U) = 0
\quad \text{for } t \in \mathbb R \text{ and } x \in \mathbb R^2.
\]

By \eqref{uv123} and the arbitrariness of $\varepsilon$ and $\delta$, one has
\[
v^-(t,x) \leq U(t,x) \leq v^+(t,x)
\quad \text{for all } t \in \mathbb R \text{ and } x \in \mathbb R^2.
\]
By Lemma~\ref{large1}, one has
\[
\lim_{R \to \infty} 
\sup_{x_1^2 + (x_2 - st)^2 > R^2}
\left| U(t,x) - \Phi\!\left(\frac{c}{s}\bigl(x_2 - st - m_*|x_1|\bigr)\right) \right|
= 0.
\]
The stability of the V-shaped front (see \eqref{stable}) implies that
\[
\lim_{t \to \infty} 
\| U(t,x) - V(x_1, x_2 - st) \|_{C^2(\mathbb R)} = 0.
\]
Therefore, for any $t > 2t_n$, one has
\[
\lim_{n \to \infty}
\| u_n(t - t_n, x) - V(x_1, x_2 - st) \|_{C^2(\mathbb R^2)} = 0.
\]
Since $u_n(t-t_n,x)=u(t,x)$ for each $n\in\mathbb N$, the formula \eqref{1.7} follows.

{\it Step 2: proof of \eqref{UP}.}
Let $\tilde\lambda_1$ and $d_1$ be the positive constants defined as in \eqref{V22}. 
Let $\vp>0$ be an arbitrary constant satisfying
\begin{align}\label{vps}
0<\vp<\min\left\{\frac{2\vp_0}{q^*},1\right\}<\left(\frac{2\vp_0}{q_*}\right).
\end{align}
By $\Phi(-\infty)=\mathbf1$, $\Phi(\infty)=\mathbf0$ and \eqref{1.b}, there exists a constant $C>1$ such that
\begin{equation}\label{VCC}
\begin{cases}
V(y,z)\geq (1-\vp q_*)\cdot\mathbf1 &\text{for all } z - m_*|y| \le -C,\\[4pt]
V(y,z)\leq \vp q_*\cdot\mathbf1 &\text{for all } z - m_*|y| \ge C.
\end{cases}
\end{equation}
Furthermore, there exists $\kappa>0$ such that
\begin{equation}\label{kapp}
\max_{i=1,\cdots,m}\left\{\sup_{-C \le z - m_*|y| \le C}(V_i)_z(y,z)\right\}=-\kappa<0.
\end{equation}
Take any
\begin{align}\label{sg}
0<\sigma<\min\left\{1,s\tilde\lambda_1,\frac{\vp}{2},\frac{\varpi}{2},\frac{\kappa}{2M\|\zeta\|_{L^\infty(\bar\Omega)}}\right\}.
\end{align}
Choose $\rho_1>0$ large enough such that
\begin{align}\label{rho1}
\frac{\rho_1\kappa}{2}
\geq{}&\left(\Lambda q^*+q^*+sM+\bar D\left(\frac{s^2}{c^2}+\|\psi''\|_{L^\infty(\mathbb R)}\right)M\right)\|\zeta\|_{L^\infty(\bar\Omega)}\\
&+2\bar D\frac{sM}{c}\|\nabla\zeta\|_{L^\infty(\bar\Omega)}
+\bar Dq^*\|\Delta\zeta\|_{L^\infty(\bar\Omega)}.\notag
\end{align}

By \eqref{1.5} and \eqref{1.7}, there exists $t_\sigma>0$ such that
\begin{align}\label{upp}
-\sigma q_*\cdot\mathbf{1}\leq u(t,x)-V(x_1,x_2-s t)\leq\sigma q_*\cdot\mathbf{1}
\quad\text{for all }|t|\geq t_\sigma\text{ and }x\in\bar\Omega.
\end{align}
Let $\rho_2>0$ be large enough such that
\begin{align}\label{rd}
\rho_2>C_K+c t_\sigma+1
\quad\text{and}\quad
d_1 e^{\tilde\lambda_1(C_K-\rho_2)}\leq\sigma q_* e^{-s\tilde\lambda_1 t_\sigma}.
\end{align}
For $t\geq -t_\sigma$ and $x\in\bar\Omega$, define functions $u^\pm=(u^\pm_1,\cdots,u^\pm_m)$ by
\[
\begin{cases}
u^+(t,x)=\min\left\{V(x_1,z^+)+\sigma Q(\xi^+(t,x))\zeta(x)e^{-\sigma (t+t_\sigma)},\mathbf{1}\right\},\\[4pt]
u^-(t,x)=\max\left\{V(x_1,z^-)-\sigma Q(\xi^-(t,x))\zeta(x)e^{-\sigma (t+t_\sigma)},\mathbf{0}\right\},
\end{cases}
\]
where
\[
z^\pm=x_2-st\mp\rho_1(1-e^{-\sigma (t+t_\sigma)})\mp\rho_2
\quad\text{and}\quad
\xi^\pm(t,x)=z^\pm-\psi(x_1),
\]
with $\psi$ defined as in \eqref{psii}.
We shall prove that the function $u^+$ is a supersolution of \eqref{1.1} in $[-t_\sigma,\infty)\times\bar\Omega$.
The fact that $u^-$ is a subsolution can be obtained by similar arguments.

Fix any index $i=1,\cdots,m$.
We first verify the initial and boundary conditions.
By \eqref{pq}, \eqref{zeta}, \eqref{upp} and $V_z(y,z)\ll\mathbf{0}$ for $(y,z)\in\R^2$, we have
\[
u^+(-t_\sigma,x)\geq\min\left\{V(x_1,x_2+s t_\sigma)+\sigma q_*\cdot\mathbf{1},\mathbf{1}\right\}
\geq u(-t_\sigma,x)
\quad\text{for }x\in\bar\Omega.
\]
For $t\geq -t_\sigma$ and $x\in\p\Omega$,
it follows from \eqref{psii}, \eqref{ck} and \eqref{rd} that 
\[
\xi^+(t,x)\leq z^+-m_*|x_1|\leq C_K-s t-\rho_2\leq C_K+s t_\sigma-\rho_2<0,
\]
hence $Q(\xi^+(t,x))=Q_1$ by \eqref{pq}.
By $\nu\cdot\nabla\zeta=1$ on $\partial\Omega$, \eqref{vps} and \eqref{V22}, one infers that 
\begin{align*}
\nu\cdot\nabla u_i^+(t,x)
\geq{}&-\|\nabla V_i(x_1,z^+)\|_{L^\infty(\R^2)}+\sigma q_i^1 e^{-\sigma(t+t_\sigma)}\\
&\geq -d_1 e^{\tilde\lambda_1(z^+-m_*|x_1|)}+\sigma q_* e^{-\sigma(t+t_\sigma)}\\
&\geq -d_1 e^{\tilde\lambda_1(C_K-st-\rho_2)}+\sigma q_* e^{-s\tilde\lambda_1(t+t_\sigma)}\\
&\geq0
\end{align*}
for all $t\geq -t_\sigma$ and $x\in\p\Omega$
such that $u_i^+(t,x)<1$.

It suffices to show that
$
\mathcal L_i(u^+)(t,x):
=(u_i^+)_t(t,x)-D_i\Delta u_i^+(t,x)-F_i(u^+(t,x))\geq0
$
for all $t\geq -t_\sigma$ and $x\in\bar\Omega$ such that $u_i^+(t,x)<1$.
By a direct calculation, one has
\begin{align*}
\mathcal L_i(u^+)(t,x)
={}&F_i(V(x_1,z^+))-F_i(u^+(t,x))
-\rho_1\sigma V_{x_2}(x_1,z^+)e^{-\sigma(t+t_\sigma)}\\
&+\sigma q_i'(\xi^+(t,x))(-s-\rho_1\sigma e^{-\sigma(t+t_\sigma)})\zeta(x) e^{-\sigma(t+t_\sigma)}\\
&-\sigma^2 q_i(\xi^+(t,x))\zeta(x)e^{-\sigma(t+t_\sigma)}
-D_i\sigma\Delta q_i(\xi^+(t,x))\zeta(x)e^{-\sigma(t+t_\sigma)}\\
&-2D_i\sigma\nabla q_i(\xi^+(t,x))\cdot\nabla\zeta(x)e^{-\sigma(t+t_\sigma)}
-D_i\sigma q_i(\xi^+(t,x))\Delta\zeta(x)e^{-\sigma(t+t_\sigma)}.
\end{align*}

If $\xi^+(t,x)\leq -C-m_0$, then $Q(\xi^+(t,x))=Q_1$ by \eqref{dpq}.
From \eqref{psii}, one has $z^+-m_*|x_1|\leq -C$.
By \eqref{vps} and \eqref{VCC}, one has
\[
(1-4\vp_0)\cdot\mathbf{1}\leq(1-\vp q_*)\cdot\mathbf{1}
\leq V(x_1,z^+)\leq u^+(t,x)\leq\mathbf{1}.
\]
By \eqref{muij}, one has
\[
F_i(V(x_1,z^+))-F_i(u^+(t,x))
\geq -\sigma\sum_{j=1}^m\mu_{ij}^1 q_j^1\zeta(x)e^{-\sigma (t+t_\sigma)}
\geq \sigma\varpi q_i^1\zeta(x)e^{-\sigma (t+t_\sigma)}.
\]
Since $V_z(y,z)\ll\mathbf{0}$ for $(y,z)\in\R^2$, one gets from \eqref{D}, \eqref{zeta} and \eqref{sg} that
\begin{align*}
\mathcal L_i(u^+)(t,x)
\geq{}&\sigma q_i^1\zeta(x)e^{-\sigma (t+t_\sigma)}
\left(\varpi-\sigma-\bar D\left\|\frac{\Delta\zeta}{\zeta}\right\|_{L^\infty(\bar\Omega)}\right)
\geq0.
\end{align*}

If $\xi^+(t,x)\geq C$, then $Q(\xi^+(t,x))=Q_0$ by \eqref{dpq}.
From \eqref{psii}, one has $z^+-m_*|x_1|\geq C$.
A similar argument as above implies that
$\mathcal L_i(u^+)(t,x)\geq0$.

If $-C-m_0\leq\xi^+(t,x)\leq C$, then $-C\leq z^+-m_*|x_1|\leq C$. 
By \eqref{pq} and \eqref{LA}, one has
\[
F_i(V(x_1,z^+))-F_i(u^+(t,x))
\geq -\sigma\Lambda q_i(\xi^+(t,x))\zeta(x)e^{-\sigma (t+t_\sigma)}
\geq -\sigma\Lambda q^*\|\zeta\|_{L^\infty(\bar\Omega)}e^{-\sigma (t+t_\sigma)}.
\]
By \eqref{naq} and \eqref{deq}, one has
$
|\nabla q_i(\xi^+)|\le \frac{sM}{c}
$ and
$
|\Delta q_i(\xi^+)| \le \left(\frac{s^2}{c^2} + \|\psi''\|_{L^\infty(\mathbb R)}\right) M$.
By \eqref{pq}, \eqref{D}, \eqref{kappa}, \eqref{sg} and \eqref{rho1}, there holds
\begin{align*}
\mathcal L_i(u^+)(t,x)
\geq {}&
-\sigma\Lambda q^*\|\zeta\|_{L^\infty(\bar\Omega)}e^{-\sigma (t+t_\sigma)}
+\rho_1\sigma \kappa e^{-\sigma(t+t_\sigma)}
-\sigma^2 q^*\|\zeta\|_{L^\infty(\bar\Omega)}e^{-\sigma(t+t_\sigma)}\\
&-\sigma sM\|\zeta\|_{L^\infty(\bar\Omega)}e^{-\sigma(t+t_\sigma)}
-\rho_1\sigma^2M \|\zeta\|_{L^\infty(\bar\Omega)}e^{-2\sigma(t+t_\sigma)}\\
&-\bar D\sigma\left(\frac{s^2}{c^2} + \|\psi''\|_{L^\infty(\mathbb R)}\right) M\|\zeta\|_{L^\infty(\bar\Omega)}e^{-\sigma(t+t_\sigma)}\\
&-2\bar D\sigma\frac{sM}{c}\|\nabla\zeta\|_{L^\infty(\bar\Omega)}e^{-\sigma(t+t_\sigma)}
-\bar D\sigma q^*\|\Delta\zeta\|_{L^\infty(\bar\Omega)}e^{-\sigma(t+t_\sigma)}\\[3pt]
\geq {}&
\sigma e^{-\sigma (t+t_\sigma)}
\Biggl(
\frac{\rho_1\kappa}{2}
+2\bar D\frac{sM}{c}\|\nabla\zeta\|_{L^\infty(\bar\Omega)}
+\bar Dq^*\|\Delta\zeta\|_{L^\infty(\bar\Omega)}\\
& 
-\left(\Lambda q^*+q^*+sM+\bar D\left(\frac{s^2}{c^2} + \|\psi''\|_{L^\infty(\R)}\right) M\right)\|\zeta\|_{L^\infty(\bar\Omega)}
\Biggr)\\[3pt]
\geq {}&0.
\end{align*}

Since $V(y,z)\to\mathbf{0}$ as $z - m_*|y|\to\infty$ and  
$V(y,z)\to\mathbf{1}$ as $z - m_*|y|\to-\infty$, it follows from the comparison principle, \eqref{pq} and \eqref{sg} that
there exist $K_1>0$ and $K_2>0$ large enough such that
\[
u(t,x)\geq u^-(t,x)
\geq \min\bigl(V(x_1,z^-)-\sigma q^*\cdot\mathbf{1},\mathbf{1}\bigr)
\geq (1-2\sigma q^*)\cdot\mathbf{1}
\geq (1-\vp q^*)\cdot\mathbf{1}
\]
for all $t\geq -t_\sigma$ and $x\in\bar\Omega$ such that $x_2-st-m_*|x_1|\geq K_1$, and
\[
u(t,x)\leq u^+(t,x)
\leq \max\bigl(V(x_1,z^+)+\sigma q^*\cdot\mathbf{1},\mathbf{0}\bigr)
\leq 2\sigma q^*\cdot\mathbf{1}
\leq \vp q^*\cdot\mathbf{1}
\]
for all $t\geq -t_\sigma$ and $x\in\bar\Omega$ such that $x_2-st-m_*|x_1|\leq -K_2$.
Therefore, by similar arguments as in Lemma~\ref{u-V},
one can get that there exists $\tilde R>0$ large enough such that
\begin{align}\label{vppp}
-\vp q^*\cdot\mathbf{1}
\leq u(t,x)-V(x_1,x_2-st)
\leq \vp q^*\cdot\mathbf{1}
\end{align}
for $t\geq -t_\sigma$ and $x\in\bar\Omega$ such that $|x|\geq \tilde R$.
Since $\vp$ was arbitrary,
one deduces from \eqref{upp} and \eqref{vppp} that \eqref{xinfty} holds.
This completes the proof of Theorem~\ref{t2}.
\end{proof}

\section*{Acknowledgment}
This paper was partially supported by NSF of China (12171120)  and by the Fundamental Research 
Funds for the Central Universities (No. 2022FRFK060028,  2023FRFK030022).

\section*{Data availability statements}
We do not analyse or generate any datasets, because our work proceeds within a theoretical and 
mathematical approach.

\section*{Conflict of interest}
There is no conflict of interest to declare.

\end{document}